\documentclass[11pt,reqno]{amsart}
\usepackage[paperheight=279mm,paperwidth=18cm,textheight=26cm,textwidth=14cm,includehead]{geometry}
% Printable with 1:1 scaling on A4 and US letter paper
\usepackage[utf8]{inputenc}
\usepackage[unicode]{hyperref}
\hypersetup{
bookmarks=true,
colorlinks=true,
citecolor=[rgb]{0,0,0.5},
linkcolor=[rgb]{0,0,0.5},
urlcolor=[rgb]{0,0,0.75},
pdfpagemode=UseNone,
pdfstartview=FitH,
pdfdisplaydoctitle=true,
pdflang=en-US
}

\usepackage{amssymb}
\usepackage{amsmath}
\usepackage{amsthm}
\usepackage{amsfonts}
\usepackage{mathtools}
\usepackage{graphicx}
\usepackage{xcolor}

\numberwithin{equation}{section}
\newtheorem{theorem}{Theorem}[section]

\newtheorem{corollary}[theorem]{Corollary}
\newtheorem{lemma}[theorem]{Lemma}
\newtheorem{conjecture}[theorem]{Conjecture}
\theoremstyle{definition}

\newtheorem{definition}[theorem]{Definition}
\newtheorem{remark}[theorem]{Remark}

\DeclareMathOperator{\supp}{supp}

% Absolute values and norms using mathtools. \[lr][vV]ert produces correct spacing as opposed to | and \|.
\DeclarePairedDelimiter\abs{\lvert}{\rvert}
\DeclarePairedDelimiter\norm{\lVert}{\rVert}

\DeclarePairedDelimiterX\innerp[2]{\langle}{\rangle}{#1,#2}
% just to make sure it exists
\providecommand\given{}
% can be useful to refer to this outside \Set
\newcommand\SetSymbol[1][]{%
\nonscript\:#1\vert
\allowbreak
\nonscript\:
\mathopen{}}
\DeclarePairedDelimiterX\Set[1]\{\}{%
\renewcommand\given{\SetSymbol[\delimsize]}
#1
}

\newcommand{\La}{\langle}
\newcommand{\Ra}{\rangle}

\newcommand{\pd}{\partial}

\def\supp{\operatorname{supp}}

\newcommand{\ch}{\operatorname{ch}}

%\newcommand{\mathcal{S}}{\mathcal{S}}

%%%%%%%%%%%%%%%%%

\newcommand{\al}{\alpha}
\newcommand{\eps}{\varepsilon}
\newcommand{\la}{\lambda}
\newcommand{\vf}{\varphi}
\newcommand{\om}{\omega}
\newcommand{\Om}{\Omega}

%%%

\newcommand{\cD}{\mathcal D}
\newcommand{\cE}{\mathcal E}
\newcommand{\cF}{\mathcal F}

\newcommand{\cP}{\mathcal P}

\newcommand{\bC}{\mathbb C}
\newcommand{\bD}{\mathbb D}
\newcommand{\bE}{\mathbb E}

\newcommand{\bI}{\mathbb I}
\newcommand{\bP}{\mathbb P}
\newcommand{\bR}{\mathbb R}
\newcommand{\bT}{\mathbb T}
\newcommand{\bV}{\mathbb V}

\newcommand{\bfI}{\mathbf{I}}
\newcommand{\bfV}{\mathbf{V}}

\newcommand{\1}{{\mathbf 1}}

\newcommand{\one}{\mathbf{1}}
\newcommand{\dif}{\mathop{}\!\mathrm{d}} % \mathop produces two thin spaces, \! removes the trailing one

%%%%%%%%%%%%%%%%%%%%%%%%%%%%
% PZK's font abbreviations %
%%%%%%%%%%%%%%%%%%%%%%%%%%%%
\def\PZdefchar#1{
  \expandafter\def\csname frak#1\endcsname{\mathfrak{#1}}
  \expandafter\def\csname bb#1\endcsname{\mathbb{#1}}
  \expandafter\def\csname bf#1\endcsname{\mathbf{#1}}
  \expandafter\def\csname scr#1\endcsname{\mathcal{#1}}
  \expandafter\def\csname cal#1\endcsname{\mathcal{#1}}}
\def\PZdefloop#1{\ifx#1\PZdefloop\else\PZdefchar#1\expandafter\PZdefloop\fi}
\PZdefloop abcdefghijklmnopqrstuvwxyzABCDEFGHIJKLMNOPQRSTUVWXYZ\PZdefloop
%%%%%%%%%%%%%%%%%%%%%%

% These are unary operators. The inner pair of braces in needed for correct spacing.
\newcommand{\downset}{{\downarrow}}
\newcommand{\upset}{{\uparrow}}

\newcommand{\hypothesistensor}[1]{Let $w : T^{#1} \to [0,\infty)$ be of tensor product form.}
\hypersetup{
pdftitle={Tri-tree Carleson embeddings},
pdfauthor={Mozolyako, Volberg, Zorin-Kranich},
}

\begin{document}

\title[Carleson embedding on tri-tree and on tri-disc]%
{Carleson embedding on tri-tree and on tri-disc}
%\author[N.~Arcozzi]{Nicola Arcozzi}
%\address{Universit\`{a} di Bologna, Department of Mathematics, Piazza di Porta S. Donato, 40126 Bologna (BO)}
%\email{nicola.arcozzi@unibo.it}
%\thanks{Theorem 3.1 was obtained in the frameworks of the project 17-11-01064 by the Russian Science Foundation}
%\thanks{NA is partially supported by the grants INDAM-GNAMPA 10017 "Operatori e disuguaglianze integrali in spazi con simmetrie" and PRIN 10018 "Variet\`{a} reali e complesse: geometria, topologia e analisi armonica"}
%\author[I.~ Holmes]{Irina Holmes}
%\thanks{IH is partially supported by the NSF an NSF Postdoc under Award No.1606270}
%\address{Department of Mathematics, Michigan Sate University, East Lansing, MI. 48823}
\author[P.~Mozolyako]{Pavel Mozolyako}
\thanks{PM is supported by the Russian Science Foundation grant 17-11-01064}
\address[P.~Mozolyako]{Universit\`{a} di Bologna, Department of Mathematics, Piazza di Porta S. Donato, 40126 Bologna (BO)}
\email{pavel.mozolyako@unibo.it}
\author[G.~Psaromiligkos]{Georgios Psaromiligkos}
\address[G.~Psaromiligkos]{Department of Mathematics, Michigan Sate University, East Lansing, MI. 48823}
\email{psaromil@math.msu.edu}
\author[A.~Volberg]{Alexander Volberg}
\thanks{AV is partially supported by the NSF grant DMS-160065 and DMS 1900268 and by Alexander von Humboldt foundation}
\address[A.~Volberg]{Department of Mathematics, Michigan Sate University, East Lansing, MI. 48823}
\email{volberg@math.msu.edu}
\author[P.~Zorin-Kranich]{Pavel Zorin-Kranich}
\thanks{PZ was partially supported by the Hausdorff Center for Mathematics (DFG EXC 2047)}
\address[P.~Zorin-Kranich]{Mathematical Institute, University of Bonn, Bonn, Germany}
\email{pzorin@uni-bonn.de}
\subjclass[2010]{42B99, 47A99} % TODO: change to 2020 when it comes out.
\keywords{Carleson embedding on dyadic tree, multi-parameter Carleson embedding}
\begin{abstract}
We prove multi-parameter dyadic embedding theorem for Hardy operator on the multi-tree. We also show that for a large class of Dirichlet spaces in bi-disc and tri-disc this proves the embedding theorem of those Dirichlet spaces of holomorphic function on bi- and tri-disc. We completely describe the Carleson measures for such embeddings. The result below generalizes embedding result of \cite{AMPVZ} from bi-tree to tri-tree. One of our embedding description is similar to Carleson--Chang--Fefferman condition and involves dyadic open sets. On the other hand, the unusual feature of \cite{AMPVZ} was that
embedding on bi-tree turned out to be equivalent to one box Carleson condition. This is in striking difference to works of Chang--Fefferman and well known Carleson quilt counterexample. We prove here the same unexpected result for the tri-tree and tri-disc. Finally, we explain the obstacle that prevents us from proving our results on polydiscs of dimension four and higher.
\end{abstract}
\maketitle

\section{Introduction and the main result}
\label{intro}

The present article treats a two weight problem about multi-parameter paraproduct operators.
Singular bi-parameter and multi-parameter operators enjoyed and continue to enjoy much attention, see \cite{RF,RF1,RF2}, \cite{P}, \cite{JLJ}, \cite{JLJ2} \cite{BP}.
They are notoriously difficult.
Two weight problems for singular integrals were studied in a series of papers by Nazarov, Treil, and Volberg on dyadic singular operators and in a series of papers by Lacey, Shen, Sawyer, and Uriarte-Tuero on the Hilbert transform, see \cite{NTV99}, \cite{NTV08}, \cite{LSSUT}, \cite{La}, and the references therein.
Another example is a very recent paper by Iosevich, Krause, Sawyer, Taylor, and Uriarte-Tuero \cite{IKSTUT} on the two weight problem for the spherical maximal operator motivated by Falconer's distance set problem.

Classically, an estimate of paraproduct tri-linear forms \cite{GT} is based on $T1$ theorem of David and Journ\'e. The theory of Carleson measures (or classical $BMO$ theory) is involved.
It is well known \cite{ChF1, ChF2, JLJ, JLJ2} that in the multi-parameter setting all these results and concepts of Carleson measure, $BMO$, John--Nirenberg inequality, Calder\'on--Zygmund decomposition are much more delicate. Paper \cite{MPTT1} develops a completely new approach to prove natural tri-linear bi-parameter estimates on bi-parameter paraproducts, especially outside of Banach range. In \cite{MPTT1} Journ\'e's lemma \cite{JLJ2} was used, but the approach did not generalize to multi-parameter paraproduct forms. This issue was resolved in \cite{MPTT2}, where a simplified method was used to address the multi-parameter paraproducts. 

We consider here bi-parameter and tri-parameter paraproducts and reveal the obstacle to treat the dimension $4$ objects.  Our paraproducts are only dyadic ones, and we estimate them only in $L^2$. But we consider a two weight problem. One weight is arbitrary and the other one is dictated by the problem from complex analysis in the polydisc (our original motivation). This other weight has the product structure because of this original motivation. We are able to give the necessary and sufficient condition for the two weight boundedness of such multi-parameter  paraproducts in two and three parameter case (and of course in one parameter case).

Three remarks are in order: a) the general two weight problem  even for two parameter paraproducts seems to not having a simple necessary and sufficient criterion at all (unlike a one parameter case of dyadic paraproducts, whose solution is basically due to Eric Sawyer); so it is a ``miracle'' that the full solution exists when one measure is arbitrary, and another one has a product structure; b) this full solutions continues to amaze us because it seemingly goes against a famous Carleson counterexample in the theory of Chang--Fefferman product $BMO$; c) it is also amazing that problem about holomorphic functions in the polydisc can be reduced to dyadic problems having nothing to do with complex analysis, the information--in many cases--is not getting lost.

\medskip

\noindent{\bf Acknowledgement}. We are grateful to Sergei Treil for indicating a faulty reasoning in Section 2.4, we corrected this reasoning.

%%%%%%%%%%

\subsection{Background. Embedding from $L^2(m_2)$ to $\ell^2(T^2, \{\beta^2\})$}
\label{sec:background}

Lennart Carleson showed in \cite{Car} that the natural generalization, using a ``box'' condition, from the one parameter case (disc) to the bi-parameter case (bi-disc) of his embedding theorem does not work.
Sun-Yang A.~Chang in \cite{Ch} found the necessary and sufficient condition for the validity of the Carleson embedding for bi-harmonic extensions into the bi-disc.

The discrete versions of these results can be motivated by considering a bi-parameter dyadic paraproduct.
For a dyadic rectangle $R = I \times J \subseteq [0,1]^{2}$ denote by $h_{R}(x,y) = h_{I}(x) h_{J}(y)$ an associated $L^{2}$ normalized Haar function.
The simplest example of a \emph{bi-parameter dyadic paraproduct} is the operator
\[
\Pi_b \vf := \sum_{R}\La \vf\Ra_R (b, h_R) h_R\,.
\]
The paraproduct $\Pi_{b}$ is a bounded operator on $L^{2}$ with respect to the Lebesgue measure $m$ on $[0,1]^{2}$ if and only if we have
\begin{equation}
\label{para2}
\sum_{R}\La \vf\Ra_R^2\, \beta_R^2 \le C \int\vf^2 dm_2,
\end{equation}
where $\beta_R := (b, h_R)$ are Haar coefficients of the function $b$.
In analogy to the one-parameter Carleson embedding one could ask whether \eqref{para2} is equivalent to the ``box'' condition
\begin{equation}
\label{box1}
\sum_{R\subseteq R_0} \beta_R^2\le C' m_2(R_0)
\end{equation}
for every dyadic rectangle $R_{0} \subseteq [0,1]^{2}$.
A counterexample showing that \eqref{box1} does not imply \eqref{para2} was constructed by Carleson \cite{Car,Tao}.

It was observed by Chang \cite{Ch} (in a continuous setting) that \eqref{para2} is equivalent to the \emph{bi-parameter Carleson (or Carleson--Chang) condition}
\begin{equation}
\label{Carpara1}
\sum_{R\subset \Om} \beta_R^2\le C' m_2(\Om)\,,
\end{equation}
where the constant $C'$ is uniform for all subsets $\Omega \subseteq [0,1]^{2}$ that are finite unions of dyadic rectangles.
This necessary and sufficient condition was later used by Chang and Fefferman \cite{ChF1} to characterize the dual of the Hardy space on the bi-disc $H^1(\bD^2)$.
The same embedding holds in dimension $n>2$, from $L^2(m_n)$ to $\ell^2(T^n, \{\beta^2\})$.

\medskip

\subsection{Terminology and notation}
\label{sec:notation}
We begin with order-theoretic conventions.
\begin{definition}
A \emph{finite tree} $T$ is a finite partially ordered set such that, for every $\omega\in T$, the set $\{\alpha\in T \colon \alpha\geq\omega\}$ is totally ordered (we allow trees to have several maximal elements).

An \emph{$n$-tree} $T^{n}$ is a cartesian product of $n$ (possibly different) finite trees with the product order.

A subset $\mathcal{U}$ (resp.\ $\mathcal{D}$) of a partially ordered set $T$ is called an \emph{up-set} (resp.\ \emph{down-set}) if, for every $\alpha\in\mathcal{U}$ and $\beta\in T$ with $\alpha \leq \beta$ (resp.\ $\beta \leq \alpha$), we also have $\beta\in\mathcal{U}$  (resp.\ $\beta\in\mathcal{D}$).
\end{definition}

The \emph{Hardy operator} on an $n$-tree $T^{n}$ is defined by
\begin{equation}
\label{eq:Hardy}
\bfI \phi (\gamma) := \sum_{\gamma'\ge \gamma} \phi(\gamma')
\quad\text{for any } \phi: T^{n}\to \bR.
\end{equation}
In the one-parameter case $n=1$ we denote it by $I$, and in the two-parameter case $n=2$ by $\bbI$.
The adjoint $\bfI^*$ of the Hardy operator $\bfI$ is given by the formula
\begin{equation}
\label{eq:adj-Hardy}
\bfI^* \psi (\gamma)= \sum_{\gamma'\le \gamma} \psi(\gamma').
\end{equation}
\begin{definition}
Let $\mu,w$ be positive functions on $T^{n}$.
The \emph{box constant} is the smallest number $[w,\mu]_{Box}$ such that
\begin{equation}
\label{eq:box}
\mathcal{E}_{\beta}[\mu] :=
\sum_{\alpha \leq \beta} w(\alpha) (\bfI^{*}\mu(\alpha))^{2}
\leq [w,\mu]_{Box}
\sum_{\alpha \leq \beta} \mu(\alpha),
\quad \forall \beta\in T^{n}.
\end{equation}
The \emph{Carleson constant} is the smallest number $[w,\mu]_{C}$ such that
\begin{equation}
\label{eq:Carleson}
\sum_{\alpha\in\mathcal{D}} w(\alpha) (\bfI^{*}\mu(\alpha))^2 \leq [w,\mu]_{C} \mu(\mathcal{D}),
\quad \forall \mathcal{D}\subset T^{n} \text{ down-set.}
\end{equation}
The \emph{hereditary Carleson constant (or restricted energy condition constant or REC constant)} is the smallest constant $[w,\mu]_{HC}$ such that
\begin{equation}
\label{eq:hereditary-Carleson}
\mathcal{E}[\mu \one_{E}]
=
\sum_{\alpha\in T^{2}} w(\alpha) (\bfI^{*} (\mu\one_{E})(\alpha))^2
\le [w,\mu]_{HC} \mu(E),
\quad \forall E \subset T^{n}.
\end{equation}
The \emph{Carleson embedding constant} is the smallest constant $[w,\mu]_{CE}$ such that the adjoint embedding
\begin{equation}
\label{bIstar}
\sum_{\alpha\in T^{2}} w(\alpha) \abs{\bfI^*( \psi\mu)(\alpha) }^2
\leq [w,\mu]_{CE} \sum_{\omega \in T^{2}} \abs{\psi(\omega)}^2 \mu (\omega)
\end{equation}
holds for all functions $\psi$ on $T^{n}$.
\end{definition}

For positive numbers $A,B$, we write $A \lesssim B$ if $A\leq CB$ with an absolute constant $C$, that in particular does not depend on the tree or bi-tree or the weights $w,\mu$.

\subsection{Main result}
The inequalities
\begin{equation}
\label{eq:obvious}
[w,\mu]_{Box} \le [w,\mu]_{C} \leq [w,\mu]_{HC} \leq [w,\mu]_{CE}
\end{equation}
are obvious.
The converse inequalities for $1$-trees were proved in \cite{NTV99}.
For $2$-trees, in the case $w\equiv 1$, the converse inequality
\[
[1,\mu]_{CE} \lesssim [1,\mu]_{C}
\]
was proved in \cite{AMPS}.
In \cite{AMPVZ}, it was proved that, more generally,
\[
[w,\mu]_{CE} \lesssim [w,\mu]_{Box}
\]
for weights $w$ of tensor product form on $2$-trees.
In this article, we extend this result to $3$-trees.
\begin{theorem}
\label{thm:main}
Let $\mu: T^3\to [0, \infty)$. Let $w: T^3\to [0, \infty)$ be of tensor product form.
Then the reverses of the inequalities in \eqref{eq:obvious} also hold:
\[
% [w,\mu]_{Box} \le [w,\mu]_{C} \le  [w,\mu]_{HC} \leq
[w,\mu]_{CE}
\lesssim [w,\mu]_{HC}\lesssim [w,\mu]_{C} \lesssim [w,\mu]_{Box}.
\]
\end{theorem}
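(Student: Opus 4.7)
The plan is to establish the three reverse inequalities of \eqref{eq:obvious} in sequence: $[w,\mu]_{CE}\lesssim[w,\mu]_{HC}$, $[w,\mu]_{HC}\lesssim[w,\mu]_{C}$, and $[w,\mu]_{C}\lesssim[w,\mu]_{Box}$. Combined with \eqref{eq:obvious} these yield the theorem. The tensor-product structure of $w$ plays an essential role in the last two steps, where the tri-tree statements will be reduced to the bi-tree result of \cite{AMPVZ}.

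I would first dispatch $HC\Rightarrow CE$ by a dimension-independent linearization: writing $\psi\ge 0$ and decomposing into dyadic level sets $E_{k}:=\{2^{k}\le\psi<2^{k+1}\}$, one has $\bfI^{*}(\psi\mu)\lesssim\sum_{k}2^{k}\bfI^{*}(\mu\mathbf{1}_{E_{k}})$; squaring, applying Cauchy--Schwarz across levels with a geometric weight, and invoking $HC$ on each $E_{k}$ yields the $CE$ bound. The step $C\Rightarrow HC$ is more delicate because $\bfI^{*}(\mu\mathbf{1}_{E})$ is supported on the upper shadow $E^{\uparrow}$, which is an up-set rather than a down-set; I would run a stopping-time decomposition of $E$ along dyadic level sets of $\bfI^{*}\mu$, producing strata $E_{k}$ whose potentials concentrate on associated down-sets $\mathcal{D}_{k}$ with $\mu(\mathcal{D}_{k})\lesssim\mu(E_{k})$, to each of which Carleson applies; the tensor form of $w$ then lets the level sums combine with controlled loss.

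The core new step is $[w,\mu]_{C}\lesssim[w,\mu]_{Box}$ on the tri-tree, where one would like to reduce to the bi-tree case by slicing along one coordinate. For a finite down-set $\mathcal{D}\subset T^{3}$ and each $\alpha_{3}\in T_{3}$, the set $\mathcal{D}^{(\alpha_{3})}:=\{(\alpha_{1},\alpha_{2}):(\alpha_{1},\alpha_{2},\alpha_{3})\in\mathcal{D}\}$ is a down-set in $T^{2}$, and the partially integrated measure $\mu^{(\alpha_{3})}(\alpha_{1},\alpha_{2}):=\sum_{\gamma_{3}\le\alpha_{3}}\mu(\alpha_{1},\alpha_{2},\gamma_{3})$ satisfies $\bbI^{*}\mu^{(\alpha_{3})}(\alpha_{1},\alpha_{2})=\bfI^{*}\mu(\alpha_{1},\alpha_{2},\alpha_{3})$. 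Using the tensor factorization of $w$,
\[
\sum_{\alpha\in\mathcal{D}} w(\alpha)(\bfI^{*}\mu(\alpha))^{2}
= \sum_{\alpha_{3}} w_{3}(\alpha_{3}) \sum_{(\alpha_{1},\alpha_{2})\in\mathcal{D}^{(\alpha_{3})}} w_{1}(\alpha_{1})w_{2}(\alpha_{2})(\bbI^{*}\mu^{(\alpha_{3})}(\alpha_{1},\alpha_{2}))^{2}.
\]
The bi-tree Box-to-Carleson implication of \cite{AMPVZ} bounds the inner sum by $[w_{1}\otimes w_{2},\mu^{(\alpha_{3})}]_{Box}\cdot\mu^{(\alpha_{3})}(\mathcal{D}^{(\alpha_{3})})$, reducing the problem to (i) controlling the sliced bi-tree Box constants $[w_{1}\otimes w_{2},\mu^{(\alpha_{3})}]_{Box}$ uniformly by $[w,\mu]_{Box}$, and (ii) summing back $\sum_{\alpha_{3}}w_{3}(\alpha_{3})\mu^{(\alpha_{3})}(\mathcal{D}^{(\alpha_{3})})$ into $\mu(\mathcal{D})$ up to a controlled loss.

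The main obstacle, which I expect to absorb the bulk of the technical work, is precisely this pair of transfers. Neither is automatic: the tri-tree Box condition applied at a three-fold box $(\beta_{1},\beta_{2},\alpha_{3})$ introduces an extra weighted sum over $\alpha_{3}'\le\alpha_{3}$ whose monotonicity points in the wrong direction, and the partial integral defining $\mu^{(\alpha_{3})}$ inflates its mass relative to $\mu$. Overcoming (i) will likely require a potential-theoretic bootstrap using the energy $\mathcal{E}_{\beta}[\mu]$ of \eqref{eq:box} together with a quasi-greedy stopping-time selection of boxes in the slice, while (ii) should follow from a compensating telescoping in $\alpha_{3}$; in both, the tensor form of $w$ is what decouples coordinates. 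This is also, I believe, precisely where the strategy fails for $n\ge 4$: a fourth-coordinate slicing would demand an analogous transfer of a tri-tree Box constant to sliced bi-tree Box constants and an additional Box-to-Carleson pass, and the multiplicative losses accumulated at each level fail to stay bounded — the obstruction flagged in the introduction.
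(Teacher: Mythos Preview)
Your approach diverges fundamentally from the paper's, and the obstacles you yourself flag are not technical but fatal to the slicing strategy. First, your $HC\Rightarrow CE$ step does not close: level-set decomposition plus Cauchy--Schwarz across levels always loses a factor (the number of active levels, or a geometric weight per level) that the bare HC bound $\calE[\mu\one_{E_k}]\le[w,\mu]_{HC}\,\mu(E_k)$ cannot absorb. Without extra input one only has $\int\bfV^\mu\dif\rho\le\calE[\mu]^{1/2}\calE[\rho]^{1/2}\lesssim|\mu|^{1/2}|\rho|^{1/2}$, which is exactly too weak to sum. The paper (Theorem~\ref{murho}) uses the surrogate maximum principle to upgrade this to $\int\bfV^\mu\dif\rho\lesssim|\mu|^{1/2-\kappa'}|\rho|^{1/2+\kappa'}$ with $\kappa'>0$, and that exponent gain is precisely what makes the level-set argument summable (Theorem~\ref{thm:HC=>CE}). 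Second, your slicing obstacle (ii) is real and unfixable: one computes $\sum_{\alpha_3}w_3(\alpha_3)\,\mu^{(\alpha_3)}(\mathcal{D}^{(\alpha_3)})=\sum_\gamma\mu(\gamma)\sum_{\alpha_3\ge\gamma_3:\,(\gamma_1,\gamma_2,\alpha_3)\in\mathcal{D}}w_3(\alpha_3)$, and the inner sum is unbounded already for $\mathcal{D}=T^3$, $w_3\equiv 1$. No telescoping repairs this over-counting, and (i) is equally hopeless since the tri-tree Box condition controls a sum over $\alpha_3'\le\alpha_3$, not a single slice. The paper makes no attempt to slice to the bi-tree.

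What the paper does instead is prove a single analytic inequality directly on $T^3$, the \emph{surrogate maximum principle} \eqref{eq:max-surrogate}: $\int\bfV_\delta^\mu\dif\rho\lesssim(\delta|\rho|)^\kappa(\calE_\delta[\mu]\calE[\rho])^{(1-\kappa)/2}$ for some $\kappa>0$. This is obtained (Lemma~\ref{lem:bVdelta:3}) from the algebraic splitting $(\bfI f)(\bfI g)\le\bfI\bigl(\sum_A I_Af\cdot I_{A^c}g\bigr)$ of Lemma~\ref{lem:split3} combined with a tri-tree energy bound (Lemma~\ref{lem:energy-bd:3}) for $\int w\,(I_i(w_if)\cdot I_{(i)}(w_{(i)}f))^2\one_{\bfI(wf)\le\lambda}$; the tensor form of $w$ enters only to commute weights through the single-coordinate Hardy operators. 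All three reverse inequalities --- $C\Rightarrow HC$ (Theorem~\ref{CtoREC}), $HC\Rightarrow CE$ (Theorem~\ref{thm:HC=>CE}), and $Box\Rightarrow HC$ directly (Theorem~\ref{thm:box=>HC}, via the combinatorial Lemma~\ref{2main}) --- are then dimension-general consequences of \eqref{eq:max-surrogate}. The obstruction at $n\ge 4$ (Section~\ref{cannot}) is localized in the energy bound: on $T^4$ the splitting produces terms of type $\bI_{12}f\cdot\bI_{34}f$, and one would need $\int_{T^4}(\bI_{12}f\cdot\bI_{34}f)^2\lesssim\int f^2$, for which the superadditivity/partial-summation device of Corollary~\ref{cor:1} that handles the $I_i\cdot I_{(i)}$ terms on $T^3$ no longer applies. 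This is a different obstruction from the accumulated-slicing-loss picture you propose.
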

Theorem~\ref{thm:main} will follow from conditional results on $n$-trees, namely Theorem~\ref{thm:HC=>CE} and Theorem~\ref{thm:box=>HC}.
%%%%%%%%%%%%%%%%%%%%%%%%%%%%%%%%%%%%%%%%%%%%%%%%%%%%%%%%%%%%%
%%%%%%%%%%%%%%%%%%%%%%%%%%%%%%%%%%%%%%%%%%%%%%%%%%%%%%%%%%%%%%

%%%%%%%%%%%%%%%%%%%%%%%%%%%%%%%%%%%%%%%%%%%%%%%%%%%%%%%%%%%%%%%%
%%%%%%%%%%%%%%%%%%%%%%%%%%%%%%%%%%%%%%%%%%%%%%%%%%%%%%%%%%%%%%%%%
\section{Holomorphic function spaces in polydisc}
\label{compl}

Another way to interpret the Hardy inequality (or more precisely, its weighted version, see below) is to consider its connection to certain problems in the theory of Hilbert spaces of analytic functions on the (poly-)disc.It was actually this connections that motivated the study of this inequality in \cite{ARSW} and \cite{AMPS}.

We start with some additional notation. Given an integer $d\geq1$ and $s = (s_1,\dots,s_d) \in\mathbb{R}^d$ we consider a Hilbert space $\mathcal{H}_s(\mathbb{D}^d)$ of analytic functions on the poly-disc $\mathbb{D}^d$ with the norm
\[
\|f\|_{\mathcal{H}_s(\mathbb{D}^d)}^2 := \sum_{n_1,\dots,n_d\geq0} |\widehat{f}(n_1,\dots,n_d)|^2(n_1+1)^{s_1}\cdot\dots\cdot(n_d+1)^{s_d},
\]
where
\[
f(z) = \sum_{n_1,\dots,n_d\geq0}\widehat{f}(n_1,\dots,n_d)z_1^{n_1}\cdot\dots\cdot z_d^{n_d},\quad z= (z_1,\dots,z_d) \in\mathbb{D}^d.
\]
Observe, that, clearly
\begin{equation}\label{e:c71}
\mathcal{H}_{\vec{s}}(\mathbb{D}^d) = \bigotimes_{j=1}^d\mathcal{H}_{s_j}(\mathbb{D}).
\end{equation}
In particular, the choice $s = (0,\dots,0)$ gives a classical Hardy space on the poly-disc, on the other hand $s = (1,\dots,1)$ corresponds to the Dirichlet space.

\subsection{Embedding (Carleson) measures on polydisk}

A measure $\nu$ on $\mathbb{D}^d$ is called a Carleson measure for $\mathcal{H}_s$, if there exists a constant $C_{\nu}$ such that
\begin{equation}\label{e:c70}
\int_{\mathbb{D}^d}|f(z)|^2 \,d\nu(z) \leq C_{\nu}\|f\|^2_{\mathcal{H}_s(\mathbb{D}^d)},
\end{equation}
or, in other words, the embedding $Id:\,\mathcal{H}_s(\mathbb{D}^d)\rightarrow L^2(\mathbb{D}^d,d\nu)$ is bounded.

For brevity we concentrate below on the case $d=2$, indicating the changes necessary for other $d$. Consider first the case of $s=0$.

Given a holomorphic function $f(z_1,z_2)=\sum_{m,n\geq0}a_{mn}z_1^mz_2^n$ on $\mathbb{D}^2$ we let
\begin{equation}\notag
\|f\|^2_{\mathcal{D}(\mathbb{D}^2)} = \sum_{m,n\geq 0}|a_{mn}|^2(m+1)(n+1),
\end{equation}
this norm can also be written as follows
\begin{equation}\notag
\begin{split}
&\|f\|^2_{\mathcal{D}(\mathbb{D}^2)} = \frac{1}{\pi^2}\int_{\mathbb{D}^2}|\partial_{z_1,z_2}f(z_1,z_2)|^2\,dz_1\,dz_2+\frac{1}{2\pi^2}\int_{\mathbb{T}}\int_{\mathbb{D}}|\partial_{z_1}f(z_1,e^{it})|^2\,dz_1\,dt +\\
&\frac{1}{2\pi^2}\int_{\mathbb{D}}\int_{\mathbb{T}}|\partial_{z_2}f(e^{it},e^{is})|^2\,ds\,dz_2 + \frac{1}{4\pi^2}\int_{\mathbb{T}}\int_{\mathbb{T}}|f(e^{is},e^{it})|^2\,ds\,dt = \\
&\|f\|^2_* + \;\textup{other terms},
\end{split}
\end{equation}
where $\|f\|_*$ is a seminorm which is invariant under biholomorphisms of the bidisc. In what follows however we use an equivalent norm, arising from the representation $\mathcal{D}(\mathbb{D}^2) = \mathcal{D}(\mathbb{D})\otimes\mathcal{D}(\mathbb{D})$ (this particular choice will be justified in few lines). For $f\in Hol(\mathbb{D})$ let
\begin{equation}\label{e:21}
\|f\|^2_{\mathcal{D}} := \frac{1}{\pi}\int_{\mathbb{D}}|f'|^2(z)\,dz + C_0|f(0)|^2,
\end{equation}
where $C_0>0$ is a constant to be chosen shortly.
It is classical fact that the Dirichlet space on the unit disc is a Reproducing Kernel Hilbert Space (cite [literature]), and, consequently,  $\mathcal{D}(\mathbb{D}^2)$ is one as well. The reproducing kernel $K_z,\; z\in\mathbb{D}^2$ (generated by $\|\cdot\|_{\mathcal{D}}$) is
\begin{equation}\label{e:22}
K_z(w) = \left(C_1 + \log\frac{1}{1-\bar{z}_1w_1}\right)\left(C_1 + \log\frac{1}{1-\bar{z}_2w_2}\right),\quad z,w\in\mathbb{D}^2
\end{equation}
(so it is a product of reproducing kernels for $\mathcal{D}(\mathbb{D})$ in respective variables), and $C_1 > 0$ is a constant depending on $C_0$.\\
The definition of norm in \eqref{e:21} implies that $K_z$ enjoys the following important property
\begin{equation}\label{e:23}
\Re K_z(w) \sim |K_z(w)|,\quad z,w\in\mathbb{D}^2,
\end{equation}
if we take $C_1$ (re. $C_0$) to be large enough

%\textit{Where do we define the weighted Hardy operator though, here or in the introduction?}

Let $\mu,w : T^d\rightarrow \mathbb{R}_+$. We define a weighted Hardy operator to be
\[
\mathbf{I}_wf(\alpha) := \sum_{\beta\geq\alpha}f(\beta)w(\beta).
\]
We call $(\mu,w)$ a trace pair for the weighted Hardy inequality, if
\begin{equation}\label{e:c73.1}
\int_{T^d}(\mathbf{I}_wf)^2d\mu \lesssim \int_{T^d}f^2\,dw
\end{equation}
for any $f:T^2\rightarrow \mathbb{R}_+$, i.e the operator $\mathbf{I}_w:\, L^2(T^d,dw)\rightarrow L^2(T^d,d\mu)$ is bounded. The dual version is
\begin{equation}\label{e:c73.2}
\int_{T^d}(\mathbf{I}^*(\varphi\mu))^2\,dw \leq \int_{T^d}\varphi^2\,d\mu
\end{equation}
for any $\varphi: T^d\rightarrow\mathbb{R}_+$, where 
\[
\mathbf{I}^*\varphi(\beta) := \sum_{\alpha\leq\beta}\varphi(\alpha).
\]

It turns out that trace pairs for the weighted Hardy inequality and Carleson measures for $\mathcal{H}_s$ are closely related. Below we give a brief overview of this relationship. We gloss over most of the technical parts of this short exposition, for more details see \cite{ARSW} and \cite[Section 2]{AMPS}, where it was presented for $d=1, s=s_1 \in (0,1]$ and $d=2, s = 1$ respectively.

We start by assuming that $s\in (0,1]^d$ (so that $\mathcal{H}_s(\mathbb{D}^d)$ is a \textit{weighted Dirichlet space on the poly-disc}), and that $\supp\nu\subset r\mathbb{D}^d$ for some $r<1$ (the latter is just a convenience assumption that allows us to make the corresponding graphs to be finite, no estimate below will depend on $r$, or on the depth of the graph).

It is well known that $\mathcal{H}_{s_j}(\mathbb{D}),\,1\leq j\leq d,$ is a \textit{reproducing kernel Hilbert space} (RKHS) with kernel $K_{s_j}$ satisfying (possibly after a suitable change of norm)
\begin{equation}\label{e:c74}
\begin{split}
& |K_{s_j}|(z_j,\zeta_j) \asymp |1-z_j\bar{\zeta}_j|^{s_j-1},\quad 0<s_j<1\\
& |K_{s_j}|(z_j,\zeta_j) \asymp \log|1-z_j\bar{\zeta}_j|^{-1},\quad s_j=1.
\end{split}
\end{equation}
Moreover it is not hard to verify that 
\begin{equation}
\label{Re}
\Re K_{s} \asymp |K_{s}|,  0< s\le 1\,.
\end{equation}
However, the case $s=0$ is a special case as
\begin{equation}
\label{Poi}
\text{Poisson kernel is not equivalent to the absolute value of Cauchy kernel}\,.
\end{equation}
It follows immediately that $\mathcal{H}_{\vec{s}}(\mathbb{D}^d)$ is a reproducing kernel Hilbert space as well, and
\[
K_{\vec{s}}(z,\zeta) = \prod_{j=1}^dK_{s_j}(z_j,\zeta_j),\quad z,\zeta\in\mathbb{D}^d.
\]
Going back to the Carleson embedding we see that $Id: \mathcal{H}_{\vec{s}}(\mathbb{D}^d)\rightarrow L^2(\mathbb{D}^d,d\nu)$ is bounded if and only if its adjoint $\Theta$ is bounded as well. Let us compute its action on a function $g\in L^2(\mathbb{D}^d,d\nu)$
\begin{equation}\notag
(\Theta g)(z) = \langle\Theta g, K_{\vec{s}}(z,\cdot)\rangle_{\mathcal{H}_{\vec{s}}(\mathbb{D})} = \langle g,K_{\vec{s}}(z,\cdot)\rangle_{L^2()\mathbb{D}^d,d\nu} = \int_{\mathbb{D}^d}g(\zeta)\overline{K_{\vec{s}}(z,\zeta)}\,d\nu(\zeta).
\end{equation}
Hence, for $\Theta$ to be bounded it must satisfy
\begin{equation}
\label{e:c75}
\begin{split}
\|g\|^2_{L^2(\mathbb{D}^d,d\nu)} \gtrsim \|\Theta g\|_{\mathcal{H}_{\vec{s}}(\mathbb{D}^d)} = \langle g,\Theta g\rangle_{L^2(\mathbb{D}^d,d\nu)} = \int_{\mathbb{D}^{2d}}g(z)\overline{g(\zeta)}K_{\vec{s}}(z,\zeta)\,d\nu(z)\,d\nu(\zeta).
\end{split}
\end{equation}

If inequality \eqref{e:c75} holds then trivially the following holds:
\begin{equation}
\label{positive}
\begin{split}
\|g\|^2_{L^2(\mathbb{D}^d,d\nu)} \gtrsim \int_{\mathbb{D}^{2d}}g(z)g(\zeta)K_{\vec{s}}(z,\zeta)\,d\nu(z)\,d\nu(\zeta), \quad g\ge 0\,.
\end{split}
\end{equation}

If we would know that the real part of the coordinate reproducing kernel is comparable to its absolute value, we deduce that $\Theta$ is bounded, if and only if
\begin{equation}
\label{e:c75.1}
 \int_{\mathbb{D}^{2d}}g(z)g(\zeta)|K_{\vec{s}}(z,\zeta)|\,d\nu(z)\,d\nu(\zeta)\lesssim \|g\|^2_{L^2(\mathbb{D}^d,d\nu)}\end{equation}
for any positive $g$ on $\mathbb{D}^d$.

In fact, \eqref{e:c75} implies \eqref{positive}, and we can take the real part of both sides of \eqref{positive}, putting real part on kernel. Now if to know that
\begin{equation}
\label{ReVec}
\Re K_{\vec{s}}(z, \zeta)=\Re\prod_{j=1}^dK_{s_j}(z_j,\zeta_j)\asymp |\prod_{j=1}^dK_{s_j}(z_j,\zeta_j)|=| K_{\vec{s}}(z, \zeta)|,\quad z,\zeta\in\mathbb{D}^d,
\end{equation}
we would deduce   \eqref{e:c75}$\Rightarrow$\eqref{e:c75.1}. The only thing we need for this implication is the above pointwise equivalence \eqref{ReVec}.
On the other hand, the implication  \eqref{e:c75.1}$\Rightarrow$\eqref{e:c75} obviously always holds.

\medskip

We conclude that in the presence of pointwise equivalence \eqref{ReVec} we have \eqref{e:c75}$\equiv$\eqref{e:c75.1}.

\medskip

However, equivalence \eqref{ReVec}--ultimately important for us to prove equivalence of dyadic and analytic embeddings (see below)--has limitations. First of all it is false even for $1D$ case $d=1$ if $s=0$, see \eqref{Poi}. That makes the case $s=0$ quite special. It is well known that for $1D$ case embedding measures for Poisson and Cauchy kernels on $L^2(\bT)$ are the same. This is rather simple, but should be consider as ``a miracle". Already in $2D$ situation the fact that
embedding measures for Poisson $P_{z_1}P_{z_2}$ and Cauchy $K_{\vec{0}}(z, \zeta)=(1-z_1\bar\zeta_1)^{-1} (1-z_2\bar\zeta_2)^{-1} $ kernels on $L^2(\bT^2)$ are the same is a subtle fact that will be considered in  \cite{MTV} separately. It is based on Ferguson--Lacey's characterization of symbols of ``little" Hankel operators \cite{FL}, \cite{L1}.

Another interesting distinction of the case $s=0$ is again about \eqref{Poi}. The reader will see, that for $s>0$ we will characterize the embedding in terms of
simple box (rectangular) test. As it is well known from the works of Chang, Fefferman and Carleson \cite{Ch}, \cite{RF}, \cite{Car}, \cite{Tao}, such characterization is not possible for Poisson embedding of $L^2(\bT^d)$ if $d\ge 2$. We would wish to attribute this phenomena to the fact that Poisson kernel has a special shape. In our language this means that unlike \eqref{dK}  below that holds  for $s\neq 0$, the inequality
\begin{equation}
\label{pK}
\mathbf{I}_{w_{\vec{0}}}\mathbf{1}(\alpha\vee\beta) \lesssim P(\alpha,\beta),
\end{equation}
is often false, where $P(z, \zeta)$ is the (multi-parameter) Poisson kernel, and
 $$
 P(\alpha, \beta):=\sup_{z\in q(\al),\zeta\in q_{\beta}}P(z,\zeta)\,.
 $$
 
 This finishes the discussion of $\vec{s}=\vec{0}$. 
 
 Now let $\vec{s}=(s_j)$ and $0<s_j\le 1$. 
 
 \subsection{Unweighted Dirichlet space in polydisk}
 
 We first consider the case when all $s_j=1$. For brevity we assume $d=2$. For unweighted Dirichlet space this is not a restriction of generality as we will see soon.
The reproducing kernel $K_{\vec{1}}(z, \zeta) = \log (1-z_1\bar \zeta_1) \log (1-z_2\bar \zeta_2)= K_1(z_1, \zeta_1) K_1(z_2, \zeta_2)$. The first idea is to see that our inequality \eqref{e:c75} (equivalent to embedding):
\begin{equation}
\label{e:c75.2}
 \int_{\mathbb{D}^{2}}g(z)\overline{g(\zeta)}K_{\vec{1}}(z,\zeta)\,d\nu(z)\,d\nu(\zeta) \le A \|g\|^2_{L^2(\mathbb{D}^2,d\nu)}
 \end{equation}
 implies that for every $C\ge 0$ we have
\begin{equation}
\label{e:c75.3}
 \int_{\mathbb{D}^{2}}g(z)\overline{g(\zeta)}(C+K_{1}(z_1,\zeta_1))(C+ K_{1}(z_2,\zeta_2))\,d\nu(z)\,d\nu(\zeta) \le B(C) \|g\|^2_{L^2(\mathbb{D}^2,d\nu)}
 \end{equation} 
 To deduce the latter inequality from \eqref{e:c75.2} one  should open the brackets and consider $4$ terms in the LHS. The term with $K_{1}(z_1,\zeta_1)) K_{1}(z_2,\zeta_2)$ is $\lesssim \|g\|^2_{L^2(\mathbb{D}^2,d\nu)}$ by \eqref{e:c75.2}. The term with $C^2\int_{\mathbb{D}^{2}}g(z)\overline{g(\zeta)}d\nu(z)\,d\nu(\zeta)$ obviously is $\lesssim  \|g\|^2_{L^2(\mathbb{D}^2,d\nu)}$ by H\"older inequality. Consider one of mixed terms (they are treated symmetrically):
 \[
 C \int_{\mathbb{D}^{2}}g(z)\overline{g(\zeta)}K_{1}(z_1,\zeta_1)\,d\nu(z)\,d\nu(\zeta)=: CI,
 \]
skip $C$, and, using disintegration theorem and pushing forward of $\nu$ to the first coordinate (we call that push forward $\nu_1$), we write $I$ as follows
\[
I= \int_{\mathbb{D}}G(z_1)\overline{G(\zeta_1)}K_{1}(z_1,\zeta_1))\,d\nu_1(z_1)\,d\nu_1(\zeta_1),
\]
where $G(w):= \int g(w, u) d\nu_w(u)$ and $d\nu_w(u)$ are slicing measures: $\nu(E)=\int \nu_w(E) d\nu_1(w)$.

Push forward measure $\nu_1$ on $\bD$ is  obviously a Carleson measure for $1D$ Dirichlet space, if $\nu$ is a Carleson measure for Dirichlet space in $2D$.
Therefore,
\begin{equation*}
\begin{split}
&\int_{\mathbb{D}}G(z_1)\overline{G(\zeta_1)}K_{1}(z_1,\zeta_1))\,d\nu_1(z_1)\,d\nu_1(\zeta_1) \le B\int_{\bD} |G_1(z_1)|^2 \, d \nu_1(z_1)\le 
\\
&B\int_{\bD} \Big(\int_{\bD} |g(z_1, z_2)| \, d\nu_{z_1}(z_2)\Big)^2\, d\nu_1(z_1) \le B' \int_{\bD^2}|g(z_1, z_2)|^2  \, d\nu_{z_1}(z_2)d\nu_1(z_1)  \le
\\
&B' \int_{\bD^2}|g(z_1, z_2)|^2  \, d\nu(z)\,.
\end{split}
\end{equation*}

We deduced \eqref{e:c75.3}  from \eqref{e:c75.2} by the use of the disintegration theorem and slicing measures. Notice that the nature of the kernel did not play any role. We could have done this with any dimension $d$ and any kernel $K_{\vec{s}}$ instead of $K_{\vec{1}}$.

But now the fact that we worked with precisely $K_{\vec{1}}$ will be crucial. In fact, values of $ K_1$ are obviously in the right half-plane (argument of logarithm of $1-z\bar\zeta, z, \zeta \in \bD$), hence
\begin{equation}
\label{Im}
|\Im K_1(z, \zeta)| \le \pi.
\end{equation}
Hence by adding  sufficiently large constant $C>0$ to $K_1(z, \zeta)$ we achieve a) $|\Re (C+K_1)|>>|\Im (C+K_1)|$, b) $|\Re (C+K_{\vec{1}}(z, \zeta))|\ge c\Re (\Pi_{j=1}^d(C+K_{1})(z_j, \zeta_j)))$ for any dimension $d$, it is enough to choose $C=C(d)$ large positive number. The latter inequality implies that
\begin{equation}
\label{all1}
\Re \Pi_{j=1}^d(C+ K_{1}(z_j, \zeta_j)) \asymp | \Pi_{j=1}^d(C+ K_{1}(z_j, \zeta_j))|\,.
\end{equation}

Therefore, for $\vec{s}=\vec{1}$ by modifying  the kernel we can achieve \eqref{ReVec} without changing the class of Carleson measures as \eqref{e:c75.3} shows.
This means that without changing the set of embedding measures we can equivalently replace inequality \eqref{e:c75} by \eqref{e:c75.1}. This is only for $\vec{s}=\vec{1}$ (but for any dimension $d$).

\bigskip

 \subsection{Weighted Dirichlet space in polydisk}
 
 Now $\vec{s}=(s_j)_{j=1}^d, 0<s_j \le 1$, but $\vec{s}\neq \vec{1}$. We are unable to repeat the trick that was successful in the previous section. In fact, for $K_s=(1-z\bar\zeta)^{s-1}$ with $0<s<1$ \eqref{Im} does not hold, the imaginary part will not be bounded, and so the previous reasoning with adding a large constant to each kernel of each variable does not work.
 
 However, to reduce the analytic embedding \eqref{e:c75} to dyadic embedding on multi-tree we seem to really need to show that \eqref{e:c75} implies \eqref{e:c75.1} (the converse implication being always trivial).
 
 Here we have only partial results, namely for the case when 
 \begin{equation}
 \label{close1}
 1-\epsilon(d) \le s_j\le 1
 \end{equation}
 for $\epsilon(d)$ sufficiently close to $0$.
 
 We just notice that $1-z\bar \zeta$ lies in the right half-plane if $z, \zeta\in \bD$, and so $(1-z\bar\zeta)^{\epsilon}$ lies in the cone $C_\epsilon=\{u+iv, u\ge 0, |v| \le u \cdot\tan \pi\epsilon\} $. Therefore, for every $s_j\in (1-\epsilon, 1)$,
 \[
 |\Im K_{s_j} (z_j, \zeta_j)| \le \tan \pi\epsilon\cdot \Re K_{s_j} (z_j, \zeta_j)\,.
 \]
 This implies that if $\epsilon$ is sufficiently small (depending on the dimension $d$) then
 \eqref{ReVec} holds, which, as we have already explained gives us the equivalence of \eqref{e:c75} and \eqref{e:c75.1}.
 
 From \eqref{e:c75.1} we will now proceed to conclude that dyadic embedding holds. Then we will explain why dyadic embedding implies \eqref{e:c75.1}, thus closing the circular argument.
 
 \bigskip

\subsection{From embedding of analytic functions in the polydisc to dyadic multi-parameter embedding}
Consider a fixed dyadic lattice $\cD$ on $\mathbb{T}$. Consider now the classical Whitney decomposition of $\mathbb{D}$ into dyadic Carleson half-boxes. It corresponds to this dyadic lattice. Clearly there is a one-to-one correspondence between these boxes and the vertices of a dyadic tree $T$ just because vertices of $T$ and dyadic intervals of $\cD$ are in one-to-one correspondence. So each box has an address $\al$, which is a vertex of $T$.  We can choose a fixed dyadic lattice for each coordinate tori $\mathbb{T}$. Consequently the Whitney decomposition of $\mathbb{D}^d$ generated by Cartesian products of the respective coordinate decompositions can be encoded by vertices of $T^d$, i.e. each (multi-)box $q$ corresponds to a point $\alpha_q\in T^d$, and vice-versa, each $\alpha\in T^d$ has a unique counterpart $q(\al)$.

 The reader should keep it in mind when we
will consider boxes constructed by random choice of dyadic lattices $\om:=(\cD_1,\dots, \cD_d)$. Notice that the collections $\om:=(\cD_1,\dots, \cD_d)$ of dyadic lattices form a natural measure space provided with probability measure: $(\Om, \bP)$.  For future purposes notice that given a point $z$ in polydisc $\bD^d$, and a random multi-lattice $\om$, we will call the address of the  box that contains $z$ by symbol $\al^\om(z)$ (any fixed $z$ is contained in an open box almost surely, and, thus, the address is uniquely defined by $z$ and $\om$). The box should be called $q(\al^\om(z))$.  Often we skip $\om$.

As a result we can define a family canonical map $\Lambda=\Lambda^\om : Meas^+(\mathbb{D}^d) \rightarrow Meas^+(T^d)$ given by
\begin{equation}
\label{nuTOtree}
\Lambda\nu(\alpha) = \nu(q(\al)).
\end{equation}
Similarly, given a function $g\in L^2(\mathbb{D}^d,d\nu)$ we write
\[
\Lambda g(\alpha) := \frac{1}{\nu(q(\al))}\int_{q(\al)}g(z)\,d\nu(z).
\]

%%%%%%%%%%%%%%%%

%%%%%%%%%%%%%%%%%%%%%%%%%%%%%%%%%%%%%%%%
Define a random kernel as follows. Fix $\om \in \Om$ and $(z, \zeta)\in \bD^{2d}$, in dyadic multi-lattice $\om$ find $\al^\om, \beta^\om$ such that $z\in q(\al^\om), \zeta \in q(\beta^\om)$. Up to measure zero of $\om$, $z, \zeta$ lie in corresponding open boxes, hence, the boxes are uniquely defined, and so $\al^\om, \beta^\om$ are well-defined. Then consider 
$$
k^\om (z, \zeta):=(\mathbf{I}_{w_{\vec s}} \1)\!(\al^\om(z)\vee\beta^\om(\zeta))\,.
$$
where $\alpha\vee \beta$ is the least common ancestor of $\alpha$ and $\beta$ in geometry of $T^d$.  In particular, for $\vec{s}=\vec{1}$,  multi-tree kernel
$\mathbf{I}_{w_{\vec{1}}}\mathbf{1}(\alpha\vee\beta)$ is the number of ancestors that are common for $\alpha$ and $\beta$. If $\vec{s}\neq \vec{1}$, the kernel counts the weighted number of ancestors.

An elementary computation gives  that independently of $\om$ the following inequality holds if $s_i\neq 0, i=1, \dots, d$:
\begin{equation}
\label{dK}
k_{\vec{s}}^\om (z, \zeta) \lesssim |K_{\vec s}|(z,\zeta),
\end{equation}
The implied constant depends only on $d$ and $s_i\neq 0, i=1, \dots, d$.

\begin{remark}
\label{s000}
If all $s_i$ vanish, we have ``a phase transition" in the kernel, and \eqref{dK} stops to be true in general. This explains the special role of Hardy spaces on the polydisc. If the reader thinks that Chang--Fefferman theory gives the embedding theorem for Hardy space $H^2(\bD^d)$ (the case $s_i= 0, i=1, \dots, d$), we should upset the reader by saying that this is not so. Chang--Fefferman theory gives the characterization of embedding measures in $d$-harmonic space $h^2(\bD^d)$. As, obviously, the Hardy space of holomorphic functions in the polydisc is such that $H^2(\bD^d) \subset h^2(\bD^d)$, the Chang--Fefferman theory gives the sufficient condition for measure to be an embedding measure for the Hardy class, but whether it is a necessary condition (we believe it is) is not known outside the classical case $d=1$. If the influential paper \cite{FL} were correct, then its proof can be modified to give this necessity, but unfortunately the note \cite{V} indicated a counterexample to the reasoning (but not to the result) of \cite{FL}.
\end{remark}

The inverse inequality is generally not true pointwise (due to the difference between hyperbolic geometry on the unit disc and that of a dyadic tree.
%\textcolor{red}{however, one can verify (in a technical but straightforward way) that
%\[
%\sum_{\alpha\in T^d}\sum_{\beta\in T^d}\Lambda g(\alpha)\Lambda g(\beta)|K_{\vec{s}}|(\alpha,\beta)\Lambda\nu(\alpha)\Lambda\nu(\beta)
%\]
%\[
% \approx \sum_{\alpha\in T^d}\sum_{\beta\in T^d}\Lambda g(\alpha)\Lambda g(\beta)\mathbf{I}_{w_{\vec{s}}}\mathbf{1}(\alpha\vee\beta)\Lambda\nu(\alpha)\Lambda\nu(\beta),
%\]
%with the constant depending only on dimension $d$.}
However,  one can verify that if one considers the family of dyadic lattices $\Omega$  $ \om=(\cD_1, \dots, \cD_d)$ on $\mathbb{T}^d$ with a natural probability measure on this family, then with a fixed probability
\begin{equation}
\label{dKprob}
\begin{split}
&\forall (z, \zeta) \in \bD^{2d} \,\,\, \exists\,  \Omega(z, \zeta)\subset \Om: \, a) \, \,\, \mathbb{P} (\Omega(z, \zeta)) \ge c_d>0,\, b) \,\forall   \om \in \Omega(z, \zeta),
\\
&|K_{\vec{s}}|(z,\zeta) \le C_d k_{\vec{s}}^\om(z, \zeta)\,.
\end{split}
 \end{equation}
where $c_d, C_d$ are constant that depends only on dimension $d$. 

Now \eqref{dK}, \eqref{dKprob} give us
\begin{equation}
\label{AVK}
K_{\vec{s}}(z, \zeta)  \asymp 
\bE k_{\vec s}^\om  \mathbf{1}(z, \zeta)\,.
\end{equation}
In its turn \eqref{AVK} follows from \eqref{dKprob} and \eqref{dK}.
By Tonelli's theorem we have
\begin{equation}
\label{tonelli}
\sum_{\alpha\in T^d}\sum_{\beta\in T^d}\Lambda g(\alpha)\Lambda g(\beta)\mathbf{I}_{w_{\vec{s}}}\mathbf{1}(\alpha\vee\beta)\Lambda\nu(\alpha)\Lambda\nu(\beta) = \int_{T^d}(\mathbf{I}^*(\Lambda g\Lambda\nu))^2\,dw_{\vec{s}},
\end{equation}

Given that we fix $\om$ and write for $z\in \bD^d, \zeta\in bD^d$, we use that $k{\vec{s}}^\om(z, \zeta)$ is constant on each pair
of boxes from multi-lattice $\om$ detected by pair $(z, \zeta)$:
\begin{equation}
\label{AV}
\begin{split}
& \int_{\bD^d}\int_{\bD^d} g(z)g(\zeta) k_{\vec s}^\om d\nu(z) d\nu(\zeta) =\\
&\sum_{q(\al^\om)}\sum_{q(\beta^\om)} \Lambda g (q(\al^\om)) \Lambda g (q(\al^\om)) \mathbf{I}_{w_{\vec s}}\1(\al^\om\vee \beta^\om) \Lambda (q(\al^\om))\Lambda( (q(\beta^\om))\lesssim\\
& \|\Lambda g\|_{L^2(\Lambda\nu)}^2 \le \|g\|_{L^2(\nu)}^2\,.
%&\sum_{\alpha\in T^d}\sum_{\beta\in T^d}\Lambda g(\alpha)\Lambda g(\beta)\mathbf{I}_{w_{\vec{s}}}\mathbf{1}(\alpha\vee\beta)\Lambda\nu(\alpha)\Lambda\nu(\beta),
\end{split}
\end{equation}
where constants of equivalence depend only on the dimension. Here we used \eqref{tonelli} and the boundedness  of operator with kernel  $\mathbf{I}_{w_{\vec{s}}}\mathbf{1}(\alpha\vee\beta)$ on graph $T^d$. 

Now let us hit \eqref{AV} by expectation in $\om$ and use \eqref{AVK}.
Therefore \eqref{e:c75.1} follows from \eqref{e:c73.2} for $\mu = \Lambda \nu$ and $w = w_{\vec{s}}$.

\par

Assume now that \eqref{e:c75.1} holds.  Fix a measure $\mu$  on $T^d$. Fix any $\om$. Let $\nu$ be any measure on $\bD^d$ such that $\Lambda\nu=\mu$. Then 
\[
 \int_{\bD^d}\int_{\bD^d} g(z)g(\zeta) k_{\vec s}^\om d\nu(z) d\nu(\zeta) \lesssim  \|g\|_{L^2(\nu)}^2
 \]
just because of \eqref{dK}. Apply this inequality to special non-negative $g$ that assume constant values on each given box $q(\al^\om)$. 
We can choose those constants arbitrarily with only condition that $\|g\|_{L^2(\nu)}^2= \|g\|_{L^2(\Lambda\nu)}^2<\infty$. Then we get \eqref{e:c73.2} for $\mu = \Lambda \nu$ and $w = w_{\vec{s}}$.

\subsection{Verifying \eqref{dKprob}}
It is enough to verify it for $d=1$ because then we can use  the product structure of the kernel $|K_{\vec{s}}|$ and the independence of lattices $\cD_1, \dots, \cD_d$.
Put $D(z, \zeta) := |u-v|+ 1-|z| + 1-|\zeta|$,  it is a sort of distance. Then 

$$
K_{s}(z, \zeta)=|1-z\bar{\zeta}|^{s-1}\approx D(z, \zeta)^{s-1}\,.
$$
Now  we define the analogous dyadic distance that depends on a dyadic lattice, call lattice $L$.
$D^L(z, \zeta)$ is defined as  the smallest length of dyadic arc from $L $ that is larger than 
$\max (1-|z|, 1-|\zeta|)$  and contains the shorter arc that has end-points $u, v$.

Then right hand side of  \eqref{dKprob} (for $d=1$) is $\approx (D^L(z, \zeta))^{s-1}$ and $D^L$ is always $\ge$  $c\,D$, where $c$ is an absolute constant. Of course we have
$$
\Big ( \text{dist}^L(z,\zeta)\Big)^{s-1}\approx\textbf{I}_{w_{s}}\one(\al\vee\beta)\,.
$$
To prove \eqref{dKprob} (for $d=1$) it is enough to prove that 
$$
\text{dist}^L(z,\zeta)\le C\, D(z, \zeta)
$$ for a set of dyadic lattices of a fixed probability. Let the full family of dyadic lattices be just the rotation of one fixed lattice provided with a natural probability measure $d\theta/2\pi$.

Let $I$ be a dyadic arc of length $2\pi\cdot 2^{-m}$. Given $z, \zeta$, let us calculate the probability of being a bad dyadic lattice, where bad means  the display inequality above is false with constant $C=8$. Each dyadic lattice has two end-points of first division, four end-points of the second division, et cetera.

Then the probability for the first division points to be inside $I$ is  $\frac{2}{\frac{2\pi}{|I|}}=2\cdot 2^{-m}$ (as we have two such points). The probability for the second division points to be inside $I$ is  $4\cdot 2^{-m}$ . We continue until we find the $m-4$-th division points for which the probability such a point is in $I$ is almost  $2^{m-4}\cdot 2^{-m} $. These are all bad scenarios. Their probability is at most  $1/8$.

Hence, the probability none of these points are in $I$ (which we can call a ``good'' event), is at least $7/8$. But if none of these division points are inside $I$, we have
$$
D_L (z, \zeta) \le 10 D(z, \zeta)\,.
$$
Inequality \eqref{dKprob} is proved.

\bigskip

\begin{remark}
\label{prWeights}
Notice that for $\vec{s}=\vec{1}$ (Dirichlet  space case) integration  in \eqref{tonelli} with  respect to $d w_{\vec{s}}$ means just summation over all vertices of $T^d$. For other $\vec{s}$ a natural weight appears (it weights the vertices), and the summation has to be with respect to this weight. In our situation of the scale $\mathcal{H}_{\vec{s}}$ (of various spaces of analytic functions in polydisc described at the beginning of this section),  the weight that appears is always the product of weights in each coordinate. {\it This emphasizes why we especially care about the results with product weights}.
\end{remark}

To summarize, the problem of characterizing Carleson measures for the weighted Dirichlet space $\mathcal{H}_{\vec{s}}$ can be often moved to a discrete medium (for $\vec{s}= \vec{1}$ can be {\it always} moved to discrete medium, for any dimension $d$), and  after that this problem interpreted (without any loss of information) as the problem of characterizing a trace pair $(\mu,w_{\vec{s}})$. For instance we see, that \eqref{e:c73.2} is equivalent to a single box condition (since $w_{\vec{s}}$ has a product structure)
\[
\sum_{\beta\le\alpha}(\mathbf{I}^*\Lambda\nu)^2(\beta)w_{\vec{s}}(\beta) \lesssim \mathbf{I}^*\Lambda\nu(\beta)
\]
for any $\beta\in T^d$. on the poly-disc this condition transforms to
\[
\sum_{R\subset Q}\nu^2(T(R))w_{\vec{s}}(R) \lesssim \nu(T(Q)),\quad \textup{for any}\; Q,
\]
where $Q,R$ are dyadic rectangles on the (poly-)torus $(\partial\mathbb{D})^d$, and $T(Q)$ is the usual tent area above $Q$. One can also check that this condition is necessary by testing  Carleson embedding on appropriate functions.

The argument above fails for a number of reasons, if even one of the parameters $s_j$ becomes zero. However, for the classical Hardy space on the polydisc one can still make a connection between Carleson embedding and Hardy inequality, only now we use the direct embedding \eqref{e:c73.1} instead of the dual \eqref{e:c73.2}, and the roles of $\mu$ and $w$ are reversed. It is done in Section \ref{s0}.

\section{End-point case $s=0$}
\label{s0}

We repeat ourselves: the equivalence \eqref{ReVec}--ultimately important for us to prove equivalence of dyadic and analytic embeddings--has limitations. First of all \eqref{ReVec} is false even for the case $d=1$ if $s=0$, see \eqref{Poi}. That makes the case $s=0$ quite special. It is well known that for $d=1$ case embedding measures for Poisson and Cauchy kernels on $L^2(\bT)$ are the same. This is rather classical, but should be consider as ``a miracle" exactly because \eqref{ReVec} fails. Already in $2D$ situation the fact that
embedding measures for Poisson $P_{z_1}P_{z_2}$ and Cauchy $K_{\vec{0}}(z, \zeta)=(1-z_1\bar\zeta_1)^{-1} (1-z_2\bar\zeta_2)^{-1} $ kernels on $L^2(\bT^2)$ are the same is a subtle fact that will be considered in  \cite{MTV} separately. It is based on Ferguson--Lacey's characterization of symbols of ``little" Hankel operators \cite{FL}, \cite{L1}.

Another interesting distinction of the case $s=0$ is  about \eqref{Poi}. The reader will see, that for $s>0$ we characterize the embedding in terms of
simple box (rectangular) test. As it is well known from the works of Chang, Fefferman and Carleson \cite{Ch}, \cite{RF}, \cite{Car}, \cite{Tao}, such characterization is not possible for Poisson embedding of $L^2(\bT^d)$ if $d\ge 2$. We would wish to attribute this phenomena to the fact that Poisson kernel has a special shape. In our language this means that unlike \eqref{dK}  above that holds  for $s\neq 0$,  the same type of inequality  for Poisson kernel
\begin{equation}
\label{pK1}
\mathbf{I}_{w_{\vec{0}}}\mathbf{1}(\alpha\vee\beta) \lesssim P(\alpha,\beta),
\end{equation}
is false, where $P$ is a multi-parameter Poisson kernel,
 $P(\alpha, \beta):=\sup_{z\in q(\al),\zeta\in q_{\beta}}P(z,\zeta) $.  

For $s=0$  the space $\calH_s(\mathbb{D}^d)=: H^2(\mathbb{D}^d)$ is the Hardy space on the polydisc. The embedding $Id: H^2(\mathbb{D}^d)\rightarrow L^2(\mathbb{D}^d,d\nu)$ can be still equivalently described as inequality \eqref{e:c75}, but cannot  be described any longer  as inequality \eqref{e:c76}. The reason is that the reproducing kernel $K_0(z, \zeta) = (1-z\bar\zeta)^{-1}$ does not satisfy anymore the property that its real part is equivalent to its absolute value.

Still we want to deduce the embedding theorem $Id: H^2(\mathbb{D}^d)\rightarrow L^2(\mathbb{D}^d, d\nu)$  from dyadic  statement of the type \eqref{e:c73.2}. 
Notice that embedding of Hardy space of analytic functions in the polydisc follows from the Poisson embedding. Also notice that for dimension $d=1$ these two embedding are equivalent, in the sense that the classes of embedding measures in the disc are the same.

This is absolutely not obvious for $d>1$. So below we consider only embedding of $L^2(\mathbb{T}^d)$ by the means of multi-Poisson kernel. We do not touch upon the question of equivalence  of this Poisson embedding of $L^2(\mathbb{T}^d)$ and the (Poisson) embedding of $H^2(\mathbb{T}^d)$. The relation between two embeddings (that of $L^2(\mathbb{T}^d)$ and that of $H^2(\mathbb{T}^d)$) for $d>1$ will be addressed in \cite{MTV}. It is a really subtle question that requires the extension of \cite{FL}.   To our utmost consternation this  question has not been addressed in the literature.  

To this end we stop to consider the adjoint operator to embedding $Id: H^2(\mathbb{D}^d)\rightarrow L^2(\mathbb{D}^d, d\nu)$. Instead we consider this embedding directly, namely, if $P^k$ denotes the Poisson extension in $k$-th variable, we write down our embedding as the following inequality
\begin{equation}
\label{Poi1}
\int_{\mathbb{D}^d}[P^1\dots P^d\,f]^2 \, d\nu \le \int_{\mathbb{T}^d} |f|^2 \, dm_d\,,
\end{equation}
where $\mathbb{T}^d$ is the torus and $m_d$ its Lebesgue measure.  We emphasize again that this should hold for any $f\in L^2(\bT^d, m_d)$. Let $\{q(\al)pha\}_{\alpha\in T^d}$ be the Whitney decomposition of $\mathbb{D}^d$ generated by Cartesian products of the respective coordinate decompositions. By \cite{Ch} we know that inequality \eqref{Poi1} is equivalent to Carleson--Chang condition:
\begin{equation}
\label{CCh}
\sum_{\al: q(\al)pha \cap Tent(\Om)\neq \emptyset} \nu(q(\al)pha) \le Cm_d(\Om)\quad \forall \,\, \text{open}\,\, \Om \subset {\mathbb T}^d\,.
\end{equation}

So we wish to deduce the implication  \eqref{CCh} $\Rightarrow$ \eqref{Poi1} by using only the dyadic multi-tree statement that we will formulate now.  

Let $f:T^d\to [0,\infty)$ and let $m_d$ be Lebesgue measure on $\pd T^d:= (\pd T)^d$ given by $m_d(\om)= 2^{-Nd}$. Now let $\Om$ be an arbitrary union of elementary cubes $\om$'s of size $2^{-N}$. Call such sets {\it dyadic open sets}.  For any $\al\in T^d$ we denote by $R_\al$ the dyadic $d$-subrectangle of the unit cube that corresponds to $\al$.
Let $\nu:T^d \to [0, \infty)$ be such that
\begin{equation}
\label{CChd}
\sum_{\al: R_\alpha \subset \Om}(m_d(R_\al))^2 \nu(\alpha) \le Cm_d(\Om)\quad \forall \,\, \text{dyadic open}\,\, \Om\subset \pd T^d \,.
\end{equation}
We consider the inequality on multi-tree $T^d$:
\begin{equation}
\label{emb-d}
\int_{T^d} \Big(\bfI^*(f dm_d)\Big)^2\, d\nu \le C_1 \int_{\pd T^d} f^2 \, dm_d\,.
\end{equation}

Suppose we know that \eqref{CChd} $\Rightarrow$ \eqref{emb-d} (with different constants, but without dependence on $N$). We want to use this implication as the only tool to prove implication \eqref{CCh} $\Rightarrow$ \eqref{Poi1}.

This requires some work even for the case $d=1$. Below is the way to do this reduction for $d=1, 2$. General $n$ follows the same steps.

For an interval $I$ of $\bR$, $Q_I$ denotes Carleson box, $T_I$ denotes its upper half. Similarly, for a rectangle $R=I\times J$ in $\bR^2$, we have
$Q_R:=Q_I\times Q_J$ and $T_R:= T_I\times T_J$. If $I$ run over a certain dyadic lattice of intervals, then $T_I$ tile the upper half-plane. Similarly, if $R$ run over dyadic system of rectangles, $T_R$ tile $\bC_+^2$. Let $I_0$ always denote $[-1,1]$, and let $Q_0$  be always  $Q_{[-1,1]}$.

\subsection{One dimensional case.}
\label{1}
 Let $Pf$ mean Poisson extension of $f$. We first consider  a 1D case. Let measure $\nu$ lie in the upper half plane, a nonnegative test function $f$ on the real line has support in $[-1/2, 1/2]$, and let measure $\nu$ satisfy the following box Carleson condition:
\begin{equation}
\label{1bC}
\nu(Q_I) \le C_1\,|I|\quad \forall I\,.
\end{equation}
We want to give a new proof of Carleson embedding:
\begin{equation}
\label{1PC}
\int_{Q_0} [Pf]^2 \, d\nu \le C_2 \int f^2 dx,
\end{equation}
where $C_2$ depends only on $C_1$.

As we have Harnack inequality for $Pf$ we always may assume that $\nu$ is a doubling measure in Poincar\'e metric of $\bC_+$.

We wish to prove implication \eqref{1bC} $\Rightarrow$ \eqref{1PC} by allowing ourselves to use only implication  \eqref{1bCd} $\Rightarrow$ \eqref{1Cd}, where given a dyadic lattice $\cD$, we have
\begin{equation}
\label{1bCd}
\nu(Q_I) \le C_1\,|I|\quad \forall I \in \cD
\end{equation}

\begin{equation}
\label{1Cd}
 \sum_{J\in \cD} \La f\Ra_J^2 \nu(T_J)\le C\int_\bR f^2 dx\,.
 \end{equation}

\bigskip

Here are several notations: as always for a given $I$, $\la I$ means the interval with the same center, but with lenght $\la |I|$.
If $I$ is an interval of a dyadic lattice $\cD$ then $I^j$ is its ancestor such that $|I^j|= 2^j |I|$. We denote by $c_I=x_I+iy_I$ the center of $T_I$, and by $P_I$ the Poisson kernel with pole at $c_I$. As $P_I f $ is bounded by an absolute constant times the convex combination of averages $\La f\Ra_{2^k \,I}$,  $k=0, \dots, \log\frac1{|I|}$ and average $\La f\Ra_{I_0}$, we can  choose $k_I$ that gives the maximum to $\La f\Ra_{2^k \,I}$,  $k=0, \dots, \log\frac1{|I|}$, and then
$$
Pf (c_I)=P_I f \le A_1 \La f\Ra_{2^{k_I} \,I} + A_2  \La f\Ra_{I_0}\,.
$$
Our goal is to give a new way to prove \eqref{1PC}. Traditionally it is deduced from \eqref{1bC} by interpolation argument. 
We wish to deduce it using dyadic $L^2$ estimate. The second term is trivial to estimate in \eqref{1PC}.

As to the first term, we will do the following. We consider the probabilistic space of dyadic lattices built as follows. 
Divide $\bR$ into equal intervals of size $2^{-N}$, where $N$ is very large. We do it to have $[-1/2, 1/2]$ tiled. Now we can toss the coin and choose which pair is united to one dyadic interval of size 
$2^{-N+1}$. These are fathers. Toss the coin again to choose who are grandfathers. Now for a given  interval of size $2^{-N}$ we have already $4$ different grandfathers, each with probability $1/4$.  We continue this tossing for total number of $N+4$ tossing. For any  interval of size $2^{-N}$ inside $[-1/2, 1/2]$ the most senior ancestor will contain $Q_0=[-1/2, 1/2]$ with probability $15/16$. We call the collection of such dyadic lattices $\Om$ (it is a finite family of lattices). All  dyadic lattices in $\Om$ have the equal probability, and we just renormalize the probability to have $\bP(\Om)=1$.

\medskip

The thus obtained random dyadic lattice will be called $\cD(\om)$, their probability space will be called  $(\Om, \bP)$. 
Now fix $\om \in \Om$ (meaning fix one of those lattices), and consider some small $I\in \cD(\om)$ of size $2^{-N}$. We consider $c_I$ and find $k_I$ as above.  Consider $2^{k_I}I$. It is not dyadic may be, but it has the same center $x_I$ as dyadic $I$, so consider $I^{k_I}$ and $I^{k_I+10}$ and  check whether $I^{k_I}$ is inside $\frac34 I^{k_I+10}$. Suppose yes. Then obviously as $x_I\in I^{k_I}$, we will have that 
$$
2^{k_I}I\subset I^{k_I+10}\,.
$$
It is very easy to see that 
$$
\bP \Big\{ I^{k_I}\,\text{ is inside}\,\frac34 I^{k_I+10}\Big\}\ge 1/2\,.
$$
Thus 
$$
\bP \Big\{ 2^{k_I}I\subset I^{k_I+10}\Big\}\ge 1/2\,.
$$
If the event $2^{k_I}I\subset I^{k_I+10}$ happened, then we call $T_I$  good, we color it red, we color $I^{k_I+10}$ also red, but we take measure $\nu$ on $T_I$, color it blue and move this blue mass  to $T_{I^{k_I+10}}$. No measure then is left in $T_I$. All measure movements are ``up''. It never happens that measure is moved into square $Q_J$, $J\in\cD(\om)$, from outside of $Q_J$. Therefore,
new measure satisfies the same Carleson condition \eqref{1bC} for all boxes $Q_J$, where $J$ is in this $\cD(\om)$.

Otherwise we call $T_I$ bad, we color it white. Do nothing else. 

Then we look at intervals of size $2^{-N+1}$ and repeat all that. We do this for every $\cD(\om)$. Obviously the same $T_I$ can be good for some $\om$ and bad for others. We established above that the probability to be good is at  least $1/2$.

It may happen that a certain $T_J$ has blue mass (moved from below) and original mass. If we need to move mass from $T_J$ we color blue and move only original mass, the  ``new" mass, the blue mass, which came from below, rests unmoved.

When we finish the procedure we have a new measure, and we color it all blue (many parts of it are already colored blue), and we call it $\nu_b(\om)$ (it is random, and it also depends on $f$). But it is dyadic Carleson  like \eqref{1bC} for all boxes $Q_I, I\in \cD(\om)$.

After this procedure, it may very well happen  that for a given $\cD(\om)$ and $J\in \cD(\om)$,  $T_J$ is colored white, but $J$ is colored red and $T_J$ contains blue mass particles.

For every $\om$ we also have subdomains $R$ (colored red) and $W$ (colored white) of $Q_0$, $W=W(\om)$ consisting of bad $T_I, I\in \cD(\om)$, and $R=R(\om)$, consisting of good $T_I, I\in \cD(\om)$.
Now
\begin{equation}\notag
\begin{split}
& \int_R [Pf]^2 d\nu \lesssim \!\!\!\!\!  \!\!\!\!\! \sum_{I\in \cD(\om), T_I \, good} \La f\Ra_{I^{k_I+10}}^2 \nu_b(T_{I^{k_I+10}})+   \La f\Ra_{I_0} ^2|\nu|
\\
&\le  \!\!\!\!\! \!\!\!\!\!  \sum_{J\in \cD(\om), J\, red} \La f\Ra_J^2 \nu_b(T_J)\le C\int_\bR f^2 dx,
\end{split}
\end{equation}

This is because we always preserve dyadic box Carleson \eqref{1bC} property for $\nu_b(\om)$ in corresponding $\cD(\om)$.
On the other hand, let us denote by $\cF$ the union of all $I$'s in all dyadic lattices $\cD(\om), \om \in \Om$, such that $2^{-N} \le |I| \le 2^4$. Then 
$$
\int_\Om \int_{R(\om)} [Pf]^2 d\nu \,d\bP(\om) \ge \frac12  \int_{Q_0} [Pf]^2 d\nu,
$$
because each $T_I$, $I\in \cF$,   will be red at least half of the time (meaning that $\bP\{T_I \, \text{is}\, \text{red}\}\ge 1/2$).

\bigskip

\subsection{Multi-dimensional case.}
\label{2}
Now measure $\nu$ is in $Q_0^n$. We will consider for brevity only the case $n=2$. Measure $\nu$ satisfies Chang--Carleson condition. Let us recall it.
for any open set $G\subset Q_0$, consider its tent: $T_G= (\{z, w) \in \bC_+: R(z, w) \subset G\}$, where 
$$
R(z, w) := [\Re z-\Im z, \Re z +\Im z] \times [\Re w-\Im w, \Re w +\Im w] \,.
$$
Chang--Carleson condition is
\begin{equation}
\label{2CC}
\nu(T_G)  \le C_1\,|G|\,,
\end{equation}
where $|G|$ denotes plane Lebesgue measure of $G$.

As we have Harnack inequality we always may assume that $\nu$ is a doubling measure in the natural metric of $\bC_+^2$.

This allows us to notice the following. Consider any system of dyadic rectangles. Choose any finite family of dyadic rectangles  $R=I\times J$ of this system, we call their union $O$ ``a dyadic open set". It has a dyadic tent $T_O^d$. Now, by definition, it is the union of  all $T_Q$ for all dyadic $Q$ (of the same system) such that $Q\subset O$.

The doubling property above (which we assume without loss of generality because of Harnack's principle) allows us to conclude that if $\nu$ has property \eqref{2CC} it also has the following dyadic Chang--Carleson property:
\begin{equation}
\label{2CCd}
\nu(T_O^d)  \le C\,|O|\,.
\end{equation}

Now let $\cP=P^1P^2$ be the bi-Poisson  extension. Fix a test function $f\ge 0$ supported in $[-1/2, 1/2]^2$. Consider two dyadic lattices of one variable
as before $\cD(\om_x)$, $\cD(\om_y)$, and consider the system of dyadic rectangles $R= I\times J$, $I\in \cD(\om_x), J\in \cD(\om_y)$. Call this system $\cD(\om)$, $\om:=(\om_x, \om_y)$. Let $c_R=(c_I, c_J)$, where $c_I$ is the center of $T_I$,  $c_J$ is the center of $T_J$.

Let $P_I^1$ be  the Poisson kernel with pole at $c_I$, let $P_J^2$ be the Poisson kernel with pole at $c_J$. Bi-Poisson extension $P_I^1P_J^2 $ is bounded by an absolute constant times the convex combination of averages $\La f\Ra_{2^k \,I\times 2^m \, J}$,  $k=0, \dots, \log\frac1{|I|}$,  $m=0, \dots, \log\frac1{|J|}$ and average $\La f\Ra_{Q_0}$, we can  choose $k_I, m_J$ that gives the maximum to $\La f\Ra_{2^k \,I\times 2^m \, J}$,  $k=0, \dots, \log\frac1{|I|}$, $m=0, \dots, \log\frac1{|J|}$, and then we have
$$
\cP f (c_I, c_J)=P_I^1P_J^2 f \le A_1 \La f\Ra_{2^{k_I} \,I\times 2^m \, J} + A_2  \La f\Ra_{Q_0}\,.
$$
Again we can ensure that
\begin{equation}
\label{good2}
\bP \Big\{ 2^{k_I}I\subset I^{k_I+10}, 2^{m_J}I\subset J^{m_J+10}\Big\}\ge 1/4\,.
\end{equation}
Then we just repeat the coloring scheme from subsection \ref{1}. This time we color the $4D$ rectangles $T_R$, $R\in \cD(\om)$ white, if $T_R$ is bad, namely, if the event in \eqref{good2} did not happen, and color it red and call it good if that event does happen. From red $T_R$ we scoop all the measure $\nu$, color its particles blue and move to $T_{\hat R}$ for the ancestor $\hat R:=I^{k_I+10}\times J^{m_J+10}$ of $R=I\times J$. 
%We smear the blue masses uniformly.

Again we will have that random blue measure $\nu(\om)$ satisfies \eqref{2CCd} as the original measure $\nu$ does. Then we repeat the calculation of subsection \ref{1}.  We should prove the embedding
$$
\int\int_{Q_0\times Q_0} [P^1P^2 f]^2 d\nu \le C\int\int_{I_0\times I_0} f^2 dm_2\,.
$$
We just repeat the averaging over probability calculation of subsection \ref{1}.

\section{Surrogate maximum principle}
\label{T}
From now on our paper is devoted only to the multi-tree case (dyadic $n$-rectangles case). We will need to overcome a major difficulty: the potential theory on multi-trees does not allow maximal principle.

Let $\mu$ be a positive function on an $n$-tree $T^{n}$.
Its \emph{energy} is defined as
\[
\calE[\mu]
:=
\int w (\bfI^*\mu)^2.
\]
We view the weight  $w: T^n\to [0, \infty)$ as fixed, and keep it implicit in the notation.
The energy can be written in terms of the \emph{potential}
\[
\bfV^{\mu} := \bfI( w \bfI^{*} \mu)
\]
as $\calE[\mu] = \int_{T^{n}} \bfV^\mu \dif\mu$.
Consider the truncated potential and energy
\[
\bfV_\delta^{\mu} := \bfI ( \one_{\bfV^{\mu} \leq \delta} w \bfI^{*} \mu ),
\quad
\calE_\delta[\mu]
:=
\int_{T^{n}} \bfV_\delta^{\mu} \dif\mu
=
\int_{\Set{\bfV^{\mu} \leq \delta}} w (\bfI^{*} \mu )^2.
\]
On a $1$-tree, we have the maximum principle
\begin{equation}
\label{eq:maximum-principle}
\bfV_{\delta}^{\mu} \leq \delta.
\end{equation}
It follows that, for any positive function $\rho$ on $T$, we have
\begin{equation}
\label{eq:s-max}
\begin{split}
\int_{T} \bfV_{\delta}^{\mu}\dif \rho
&=
\int_{\Set{\bfV^{\mu} \leq \delta}} w I^{*}\mu I^{*}\rho
\\ &\leq
\min( \delta \abs{\rho}, \calE[\mu]^{1/2}\calE[\rho]^{1/2})
\\ &\leq
(\delta \abs{\rho})^{\kappa} (\calE[\mu]\calE[\rho])^{(1-\kappa)/2}
\end{split}
\end{equation}
for every $\kappa \in (0,1]$, where
\[
\abs{\rho} := \int_{T} \rho.
\]
A similar estimate on $2$-trees, with a specific $\kappa$, was obtained in \cite{AMPVZ}.
In this section, we give a streamlined proof of such an estimate on $2$-trees and extend it to $3$-trees.

\bigskip

We do not know how to deal with $n$-trees with $n\ge 4$.

\bigskip

If $T^{n} = T_{1} \times \dotsm \times T_{n}$ is an $n$-tree, then we denote by $I_{1},\dotsc,I_{n}$ the Hardy operators acting in the respective coordinates, so that $\bfI = I_{1}\dotsm I_{n}$.
We use a similar index convention for operators $\Delta_{1},\dotsc,\Delta_{n}$.

\subsection{$1$-trees}
\begin{lemma}
\label{lem:split}
Let $T$ be a tree and $f, g:T\to  [0, \infty)$ be any functions.
Then
\[
(If)(Ig) \leq I(If \cdot g+f\cdot Ig).
\]
\end{lemma}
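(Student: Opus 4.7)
The plan is to verify the inequality pointwise at each vertex $\alpha \in T$ by expanding both sides as double sums over the up-set $\{\gamma : \gamma \geq \alpha\}$. Using the definition \eqref{eq:Hardy}, I would first write
\[
(If)(\alpha)(Ig)(\alpha) = \sum_{\beta \geq \alpha} \sum_{\gamma \geq \alpha} f(\beta) g(\gamma),
\]
so that the task reduces to bounding this sum over pairs in the up-set of $\alpha$ by the corresponding pointwise value of the right-hand side.

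The key step is to exploit the defining property of a tree: the set $\{\gamma \in T : \gamma \geq \alpha\}$ is totally ordered. Consequently, every pair $(\beta, \gamma)$ with $\beta, \gamma \geq \alpha$ satisfies $\beta \geq \gamma$ or $\gamma \geq \beta$ (both, on the diagonal). Splitting the double sum according to which of these holds, and allowing the diagonal to be counted twice, I obtain
\[
\sum_{\beta, \gamma \geq \alpha} f(\beta) g(\gamma) \leq \sum_{\gamma \geq \alpha} g(\gamma) \sum_{\beta \geq \gamma} f(\beta) + \sum_{\beta \geq \alpha} f(\beta) \sum_{\gamma \geq \beta} g(\gamma),
\]
which is valid because $f,g \geq 0$.

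Finally I would recognize each inner sum as a value of $I$: the first term equals $\sum_{\gamma \geq \alpha} (If)(\gamma)\, g(\gamma) = I(If \cdot g)(\alpha)$, and the second equals $\sum_{\beta \geq \alpha} f(\beta)\, (Ig)(\beta) = I(f \cdot Ig)(\alpha)$. Adding them and using linearity of $I$ yields the claimed pointwise bound. I do not expect a real obstacle here — the statement is essentially a combinatorial identity about double sums indexed by a totally ordered set (modulo overcounting the diagonal), and the only structural input is the total ordering of the up-set, which is built into the definition of a finite tree.
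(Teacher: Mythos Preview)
Your proposal is correct and matches the paper's own proof essentially line for line: both expand the product as a double sum over the totally ordered up-set of $\alpha$, split according to whether $\beta\geq\gamma$ or $\gamma\geq\beta$, and absorb the diagonal overcount using nonnegativity (the paper phrases this as adding $I(fg)(\alpha)$ to get an exact identity). There is no meaningful difference in approach.
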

\begin{proof}
\begin{align*}
If(\alpha) Ig(\alpha)
&\leq
If(\alpha) Ig(\alpha) + I(fg)(\alpha)
\\ &=
\sum_{\alpha'\geq \alpha, \alpha'' \geq \alpha} f(\alpha') g(\alpha'') + \sum_{\alpha'\geq \alpha} f(\alpha') g(\alpha')
\\ &=
\sum_{\alpha'\geq\alpha''\geq\alpha} f(\alpha') g(\alpha'') + \sum_{\alpha''\geq\alpha'\geq\alpha} f(\alpha') g(\alpha'')
\\ &=
\sum_{\alpha''\geq\alpha} If(\alpha'') g(\alpha'') + \sum_{\alpha'\geq\alpha} f(\alpha') Ig(\alpha')
\\ &=
I(If \cdot g)(\alpha) + I(f \cdot Ig)(\alpha).
\qedhere
\end{align*}
\end{proof}

\begin{definition}
\label{def:superadditive}
Given a finite tree $T$, the set of \emph{children} of a vertex $\beta\in T$ consists of the maximal elements of $T$ that are strictly smaller than $\beta$:
\[
\ch \beta := \max \Set{ \beta' \in T \colon \beta' < \beta}
\]
A function $g : T \to \bbR$ is called \emph{superadditive} if for every $\beta \in T$ we have
\[
g(\beta) \geq \sum_{\beta' \in \ch(\beta)} g(\beta').
\]
The difference operator is defined by
\[
\Delta g(\beta) := g(\beta) - \sum_{\beta' \in \ch(\beta)} g(\beta').
\]
\end{definition}

\begin{lemma}[Partial summation]
\label{lem:partial-summation}
Let $T$ be a finite tree.
For any functions $f,g : T \to \bbR$, we have
\begin{equation}
\label{eq:partial-summation}
\sum_{\alpha\in T} f(\alpha) g(\alpha) = \sum_{\alpha'\in T} \Delta f(\alpha') Ig(\alpha').
\end{equation}
\end{lemma}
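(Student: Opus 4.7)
The plan is to interchange the order of summation on the right-hand side of \eqref{eq:partial-summation} and reduce the identity to a pointwise telescoping statement relating the difference operator $\Delta$ and the adjoint Hardy operator $I^{*}$.

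First, I would write $Ig(\alpha') = \sum_{\gamma \geq \alpha'} g(\gamma)$ and swap the order of summation to obtain
\[
\sum_{\alpha' \in T} \Delta f(\alpha') \, Ig(\alpha')
= \sum_{\gamma \in T} g(\gamma) \sum_{\alpha' \leq \gamma} \Delta f(\alpha')
= \sum_{\gamma \in T} g(\gamma) \, I^{*}(\Delta f)(\gamma).
\]
Thus the lemma reduces to the pointwise identity $I^{*}\Delta f = f$ on $T$, i.e.\ that $\Delta$ is a one-sided inverse of $I^{*}$.

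To verify $I^{*}\Delta f(\gamma) = f(\gamma)$, I would expand the definition of $\Delta f(\alpha') = f(\alpha') - \sum_{\beta \in \ch(\alpha')} f(\beta)$ and count, for each $\delta \leq \gamma$, the coefficient with which $f(\delta)$ appears in $\sum_{\alpha' \leq \gamma} \Delta f(\alpha')$. The value $f(\delta)$ picks up $+1$ from the choice $\alpha' = \delta$, and $-1$ from the choice $\alpha' = \pi(\delta)$ (the parent of $\delta$) whenever $\pi(\delta)$ exists and lies in the downset $\{\alpha' \leq \gamma\}$. Using the defining tree property that $\{\alpha \in T : \alpha \geq \delta\}$ is totally ordered, $\pi(\delta)$ is the unique minimal strict ancestor of $\delta$, and $\pi(\delta) \leq \gamma$ holds precisely when $\delta < \gamma$ (since then $\gamma$ is itself a strict ancestor of $\delta$, so $\gamma \geq \pi(\delta)$). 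Hence the coefficient of $f(\delta)$ is $+1$ if $\delta = \gamma$ and $0$ otherwise, which gives $I^{*}\Delta f(\gamma) = f(\gamma)$.

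There is essentially no real obstacle: this is a finite-tree version of Abel summation, linear in both $f$ and $g$ and requiring no positivity. The only bookkeeping concerns the roots (maximal vertices, which have no parent): a root $\delta$ satisfies $\delta \leq \gamma$ only when $\delta = \gamma$, so the ``missing'' contribution from $\alpha' = \pi(\delta)$ never needed to be present, and the telescoping argument goes through without modification.
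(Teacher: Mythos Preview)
Your argument is correct and follows essentially the same route as the paper: both reduce the identity to the pointwise fact $f(\gamma)=\sum_{\alpha'\le\gamma}\Delta f(\alpha')$ and then swap the order of summation. The only cosmetic difference is that the paper establishes this telescoping identity by induction on the size of the tree, whereas you verify it by directly counting the $\pm1$ contributions of each $f(\delta)$ via the unique-parent property; the content is the same.
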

\begin{proof}
By induction on the size of the tree, one can show
\[
f(\alpha) = \sum_{\alpha' \leq \alpha} \Delta f(\alpha').
\]
It follows that
\[
\sum_{\alpha} f(\alpha) g(\alpha)
=
\sum_{\alpha,\alpha' : \alpha'\leq\alpha} \Delta f(\alpha') g(\alpha)
=
\sum_{\alpha'} \Delta f(\alpha') \sum_{\alpha : \alpha'\leq \alpha}g(\alpha)
=
\sum_{\alpha'\in T} \Delta f(\alpha') Ig(\alpha').
\qedhere
\]
\end{proof}

\begin{lemma}
\label{lem:1}
Let $T$ be a tree and $f,g : T \to \bbR$.
Then
\[
I^{*}(fg)
=
I^{*}(\Delta f \cdot Ig) - f (Ig-g).
\]
\end{lemma}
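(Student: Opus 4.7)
The plan is to expand the right-hand side using the definitions of $I^{*}$ and $Ig$, then swap the order of summation and use the tree structure to split the resulting double sum into two pieces: one that reassembles $I^{*}(fg)$ and one that produces the correction term $f(Ig-g)$. The key observation is Lemma~\ref{lem:partial-summation}'s identity
\[
f(\alpha) = \sum_{\alpha' \leq \alpha} \Delta f(\alpha'),
\]
which lets us convert back from $\Delta f$ to $f$ once the appropriate $\beta$-summation is carried out.

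More precisely, I would first write
\[
I^{*}(\Delta f \cdot Ig)(\alpha)
=
\sum_{\beta \leq \alpha} \Delta f(\beta) \sum_{\gamma \geq \beta} g(\gamma).
\]
Then I would invoke the defining property of a tree: since both $\gamma$ and $\alpha$ lie above $\beta$, the set $\{\omega \colon \omega \geq \beta\}$ is totally ordered, so $\gamma$ and $\alpha$ are comparable, and therefore exactly one of $\gamma \leq \alpha$ or $\gamma > \alpha$ holds. This splits the inner sum into two pieces.

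Swapping the order of summation in each piece gives
\begin{align*}
I^{*}(\Delta f \cdot Ig)(\alpha)
&=
\sum_{\gamma \leq \alpha} g(\gamma) \sum_{\beta \leq \gamma} \Delta f(\beta)
+
\Bigl(\sum_{\gamma > \alpha} g(\gamma)\Bigr) \sum_{\beta \leq \alpha} \Delta f(\beta) \\
&=
\sum_{\gamma \leq \alpha} g(\gamma) f(\gamma) + f(\alpha) (Ig(\alpha) - g(\alpha)) \\
&=
I^{*}(fg)(\alpha) + f(\alpha) (Ig(\alpha) - g(\alpha)),
\end{align*}
where the second equality uses the telescoping identity from Lemma~\ref{lem:partial-summation} on the inner $\beta$-sums, and the decomposition $Ig(\alpha) = g(\alpha) + \sum_{\gamma > \alpha} g(\gamma)$. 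Rearranging yields the claim.

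There is no real obstacle here — once one recognizes that in a tree two elements above a common vertex must be comparable, the proof is a routine Fubini. The only detail to be careful about is not double-counting the case $\gamma = \alpha$, which is handled cleanly by the dichotomy $\gamma \leq \alpha$ versus $\gamma > \alpha$.
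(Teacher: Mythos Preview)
Your proof is correct and is essentially the same argument as the paper's, just organized in reverse: the paper starts from $I^{*}(fg)(\beta)$, applies the partial-summation Lemma~\ref{lem:partial-summation} on the subtree $\downset\beta$, and then uses the tree comparability to compute $I(g\one_{\downset\beta})(\alpha) = Ig(\alpha) - Ig(\beta) + g(\beta)$, whereas you start from $I^{*}(\Delta f \cdot Ig)$ and use the comparability of $\gamma$ and $\alpha$ above $\beta$ to split the double sum directly. Both rest on the same two ingredients --- the telescoping identity $f = I^{*}\Delta f$ and the total ordering of $\{\omega : \omega \geq \beta\}$ --- and differ only in bookkeeping.
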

\begin{proof}
For $\beta \in T$, write $\downset \beta := \Set{\alpha \in T \given \alpha \leq \beta}$.
This is again a sub-tree, on which we can apply the partial summation identity \eqref{eq:partial-summation}.
Hence,
\[
I^{*}(fg)(\beta)
=
\int_{\downset\beta} fg
=
\int_{\downset\beta} \Delta f \cdot I(g \one_{\downset\beta})
\]
For each $\alpha \in \downset\beta$, we have
\[
I(g \one_{\downset\beta})(\alpha)
=
\sum_{\gamma : \alpha \leq \gamma \leq \beta} g(\gamma)
=
\sum_{\gamma : \alpha \leq \gamma} g(\gamma) - \sum_{\gamma : \beta \leq \gamma} g(\gamma) + g(\beta)
=
Ig(\alpha) - Ig(\beta) + g(\beta).
\]
Therefore,
\begin{align*}
I^{*}(fg)(\beta)
&=
\int_{\downset\beta} \Delta f \cdot (Ig - Ig(\beta) + g(\beta))
\\ &=
\int_{\downset\beta} \Delta f \cdot Ig
-
(Ig(\beta) - g(\beta)) \int_{\downset\beta} \Delta f
\\ &=
I^{*}(\Delta f \cdot Ig)(\beta) - (Ig(\beta) - g(\beta)) f(\beta).
\qedhere
\end{align*}
\end{proof}

\begin{corollary}[{cf.~\cite[Lemma 2.2]{AMPVZ}}]
\label{cor:1}
Let $T$ be a tree and $f, g:T\to  [0, \infty)$ .
Then
\[
I^{*}(fg)
\leq
I^{*}(\Delta f \cdot Ig).
\]
\end{corollary}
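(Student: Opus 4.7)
The plan is to apply Lemma \ref{lem:1} directly and simply observe that the subtracted term is nonnegative under the sign hypotheses. By Lemma \ref{lem:1} we have the pointwise identity
\[
I^{*}(fg) = I^{*}(\Delta f \cdot Ig) - f\,(Ig - g),
\]
so the desired inequality $I^{*}(fg) \leq I^{*}(\Delta f \cdot Ig)$ reduces to verifying that
\[
f\,(Ig - g) \geq 0
\]
pointwise on $T$.

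Since $f \geq 0$ by hypothesis, it suffices to show $Ig \geq g$ pointwise. But $g \geq 0$, and from the definition \eqref{eq:Hardy} we have
\[
Ig(\alpha) = \sum_{\alpha' \geq \alpha} g(\alpha') \geq g(\alpha),
\]
because $\alpha$ itself appears among the $\alpha' \geq \alpha$ and the remaining summands are nonnegative. Hence $Ig - g \geq 0$, and the result follows.

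There is no substantive obstacle here: the work has already been done in Lemma \ref{lem:1}, where the identity was proved via partial summation together with the expansion $I(g\one_{\downset\beta})(\alpha) = Ig(\alpha) - Ig(\beta) + g(\beta)$. The corollary is simply the sign-positive specialization of that identity; the only thing to notice is that positivity of $f$ and $g$ together with the trivial bound $Ig \geq g$ forces the error term in Lemma \ref{lem:1} to have the correct sign so that dropping it preserves the inequality.
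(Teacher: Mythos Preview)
Your proof is correct and is exactly the argument the paper has in mind: Corollary~\ref{cor:1} is stated without proof as an immediate consequence of Lemma~\ref{lem:1}, and the only content is the observation that $f(Ig-g)\geq 0$ when $f,g\geq 0$.
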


\subsection{$2$-trees}
In this section we prove a version of \eqref{eq:s-max} on $2$-trees that refines \cite[Lemma 4.1]{AMPVZ}.
Recall that $\bbI = I_{1}I_{2}$.
\begin{lemma}
\label{lem:split2}
Let $T^{2}$ be a bi-tree and $f, g :T^2\to  [0, \infty)$ 
Then
\[
(\bbI f)(\bbI g)
\leq
\bbI (\bbI f \cdot g + I_{1}f \cdot I_{2}g + I_{2}f \cdot I_{1}g + f \cdot \bbI g).
\]
\end{lemma}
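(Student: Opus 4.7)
The plan is to iterate the one-parameter splitting lemma (Lemma~\ref{lem:split}) once in each coordinate. Recall $\bbI = I_1 I_2$ and that the operators $I_1, I_2$ commute, are monotone, and preserve nonnegativity; applying $I_1$ or $I_2$ to a pointwise inequality between nonnegative functions preserves the inequality.

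First I apply Lemma~\ref{lem:split} in the first variable to the nonnegative functions $I_2 f$ and $I_2 g$ (viewed, for each fixed second coordinate, as functions on $T_1$). This gives
\[
(\bbI f)(\bbI g) = (I_1 (I_2 f))\,(I_1 (I_2 g)) \leq I_1\!\left( I_1 I_2 f \cdot I_2 g + I_2 f \cdot I_1 I_2 g \right) = I_1(\bbI f \cdot I_2 g + I_2 f \cdot \bbI g).
\]
This completes the reduction in the first variable.

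Next, for each fixed first coordinate, I apply Lemma~\ref{lem:split} in the second variable twice, once inside each of the two summands. To the first summand, applied to the functions $I_1 f$ and $g$ (in the second coordinate), it gives
\[
\bbI f \cdot I_2 g = I_2(I_1 f) \cdot I_2 g \leq I_2\!\left(I_2 I_1 f \cdot g + I_1 f \cdot I_2 g\right) = I_2(\bbI f \cdot g + I_1 f \cdot I_2 g).
\]
To the second summand, applied to $f$ and $I_1 g$, it gives
\[
I_2 f \cdot \bbI g = I_2 f \cdot I_2(I_1 g) \leq I_2\!\left(I_2 f \cdot I_1 g + f \cdot I_2 I_1 g\right) = I_2(I_2 f \cdot I_1 g + f \cdot \bbI g).
\]

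Combining these three inequalities and pulling the outer $I_1$ across the linear operator $I_2$ (they commute and act on different coordinates), I obtain
\[
(\bbI f)(\bbI g) \leq I_1 I_2\!\left( \bbI f \cdot g + I_1 f \cdot I_2 g + I_2 f \cdot I_1 g + f \cdot \bbI g \right),
\]
which is the claim. The only thing to keep straight is the bookkeeping of which function plays the role of $f$ and which plays the role of $g$ in each invocation of Lemma~\ref{lem:split}; there is no genuine obstacle, as the bi-tree statement factors cleanly into two one-parameter applications.
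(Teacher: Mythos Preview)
Your proof is correct and follows essentially the same approach as the paper: apply Lemma~\ref{lem:split} once in the first coordinate to the pair $(I_2 f, I_2 g)$, then apply it again in the second coordinate to each of the two resulting summands, and finally use the monotonicity and commutativity of $I_1, I_2$ to pull the operators together into $\bbI$.
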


\begin{proof}
The linear operators $I_{1},I_{2}$ commute and $\bbI=I_{1}I_{2}$.
To each of $I_{1},I_{2}$ we can apply Lemma~\ref{lem:split}.
Hence,
\begin{align*}
(\bbI f)(\bbI g)
&=
(I_{1}I_{2} f)(I_{1}I_{2} g)
\\ &\leq
I_{1} \Bigl( (I_{1}I_{2} f)(I_{2} g) + (I_{2} f)(I_{1}I_{2} g) \Bigr).
\end{align*}
By Lemma~\ref{lem:split}, the sum in the bracket is
\begin{align*}
&=
(I_{2}I_{1} f)(I_{2} g) + (I_{2} f)(I_{2}I_{1} g)
\\ &\leq
I_{2} \bigl( (I_{2}I_{1} f)(g) + (I_{1} f)(I_{2} g) \bigr)
+ I_{2} \bigl( (I_{2} f)(I_{1}g) + (f)(I_{2}I_{1} g) \bigr).
\end{align*}
Hence,
\begin{align*}
(\bbI f)(\bbI g)
&\leq
I_{1} \Bigl( I_{2} \bigl( (I_{2}I_{1} f)(g) + (I_{1} f)(I_{2} g) \bigr)
+ I_{2} \bigl( (I_{2} f)(I_{1}g) + (f)(I_{2}I_{1} g) \bigr) \Bigr)
\\ &=
\bbI (\bbI f \cdot g + I_{1}f \cdot I_{2}g + I_{2}f \cdot I_{1}g + f \cdot \bbI g).
\qedhere
\end{align*}
\end{proof}

The following result will not be used in our current treatment of bi-trees.
We include it to illustrate the relation of Lemma~\ref{lem:split2} with the argument in \cite{AMPVZ}.
\begin{corollary}[{cf.\ \cite[Theorem 3.1]{AMPVZ}}]
Let $0 < \delta \leq \lambda/4$.
Let $f:T^2\to  [0, \infty)$  with $\supp f \subseteq \Set{\bbI f \leq \delta}$.
Then
\[
(\bbI f) \one_{\bbI f \geq \lambda}
\leq
4\lambda^{-1} \bbI \Bigl( I_{1} f \cdot I_{2}f \Bigr).
\]
\end{corollary}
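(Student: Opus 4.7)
The plan is to apply Lemma~\ref{lem:split2} with $g=f$ and then use the support hypothesis to absorb the "diagonal" term on the set where $\bbI f$ is large.

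First, taking $f=g$ in Lemma~\ref{lem:split2} and combining the symmetric terms, we obtain pointwise
\[
(\bbI f)^{2} \leq 2\,\bbI\bigl( \bbI f \cdot f \bigr) + 2\,\bbI\bigl( I_{1}f \cdot I_{2}f \bigr).
\]
The first term on the right is where the support hypothesis enters: because $\supp f\subseteq\{\bbI f\leq \delta\}$, we have $\bbI f\cdot f \leq \delta f$ pointwise, and applying $\bbI$ yields $\bbI(\bbI f\cdot f)\leq \delta\,\bbI f$. Therefore
\[
(\bbI f)^{2} \leq 2\delta\,\bbI f + 2\,\bbI\bigl( I_{1}f \cdot I_{2}f \bigr).
\]

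Next, restrict to a vertex where $\bbI f \geq \lambda$. There $(\bbI f)^{2}\geq \lambda\,\bbI f$, while the assumption $\delta\leq \lambda/4$ gives $2\delta\,\bbI f\leq \tfrac{\lambda}{2}\,\bbI f$. Subtracting,
\[
2\,\bbI\bigl( I_{1}f \cdot I_{2}f \bigr) \;\geq\; (\bbI f)^{2} - 2\delta\,\bbI f \;\geq\; \tfrac{\lambda}{2}\,\bbI f,
\]
which rearranges to $\bbI f \leq 4\lambda^{-1}\,\bbI(I_{1}f\cdot I_{2}f)$ on the set $\{\bbI f\geq\lambda\}$. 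Multiplying by $\one_{\bbI f\geq\lambda}$ gives the claimed inequality, since the right-hand side is a nonnegative quantity defined on all of $T^{2}$.

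There is no real obstacle here once Lemma~\ref{lem:split2} is in hand; the only thing to be careful about is matching constants, namely verifying that the quantitative relation $\delta\leq \lambda/4$ is exactly what makes the absorption of $2\delta\,\bbI f$ into $(\bbI f)^{2}\geq \lambda\,\bbI f$ leave a comparable term $\tfrac{\lambda}{2}\,\bbI f$ on the right. This parallels the role of the analogous step in \cite[Theorem 3.1]{AMPVZ}, but is now a transparent two-line consequence of the cleaner product identity in Lemma~\ref{lem:split2}.
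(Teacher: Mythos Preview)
Your proof is correct and follows essentially the same approach as the paper: apply Lemma~\ref{lem:split2} with $g=f$, use the support hypothesis to bound $\bbI(\bbI f\cdot f)\le\delta\,\bbI f$, and then absorb this term using $\delta\le\lambda/4$ on the set $\{\bbI f\ge\lambda\}$. The only cosmetic difference is that the paper first divides $(\bbI f)^2$ by $\lambda$ and then subtracts the $2\delta\lambda^{-1}\bbI f$ term, whereas you subtract first and divide afterwards; the arithmetic is identical.
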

\begin{proof}
Substituting $f=g$, Lemma~\ref{lem:split2} implies that
\[
(\bbI f)^{2}
\leq
2 \bbI \Bigl(  I_{1} f \cdot I_{2}f + f \cdot \bbI f \Bigr).
\]

Using the support condition, this implies
\begin{align*}
(\bbI f) \one_{\bbI f \geq \lambda}
&\leq
\lambda^{-1} (\bbI f)^{2} \one_{\bbI f \geq \lambda}
\\ &\leq
\lambda^{-1} 2 \bbI \Bigl( I_{1} f \cdot I_{2}f + \delta f \Bigr)
\\ &\leq
2\lambda^{-1} \bbI \Bigl( I_{1} f \cdot I_{2}f \Bigr) + 2\delta\lambda^{-1} \bbI f.
\end{align*}
Since $2\delta\lambda^{-1} \leq 1/2$, this implies
\begin{align*}
(\bbI f) \one_{\bbI f \geq \lambda}
&\leq
4\lambda^{-1} \bbI \Bigl( I_{1} f \cdot I_{2}f \Bigr)
%\\ &\leq
%4\lambda^{-1} \bbI \Bigl( I_{1} f \cdot I_{2}f \Bigr).
\qedhere
\end{align*}
\end{proof}

\subsubsection{Energy bound}
\begin{lemma}
\label{lem:en2:1}
Let $T^{2}$ be a $2$-tree and $f:T^2\to  [0, \infty)$ a function that is superadditive in each parameter separately.
\hypothesistensor{2}
Suppose that $\supp f \subseteq \Set{ \bbI (wf) \leq \delta }$.
Then
\[
\int_{T^{2}} wf \cdot I_{1}(w_{1}f) \cdot I_{2}(w_{2}f) \cdot \bbI(wf)
\leq
\delta^{2} \int_{T^{2}} w f^{2}.
\]
\end{lemma}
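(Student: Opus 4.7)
The plan is to introduce the shorthand $A := I_{1}(w_{1}f)$, $B := I_{2}(w_{2}f)$, and $C := \bbI(wf)$, and to exploit the tensor product form of $w$ through the identities
\[
I_{1}(w_{1}B) = I_{2}(w_{2}A) = C,
\]
which follow since $w=w_{1}\otimes w_{2}$. I then extract the two powers of $\delta$ one at a time from the support hypothesis $\supp f \subseteq \Set{C \le \delta}$, with a single partial-summation step based on Corollary~\ref{cor:1} sandwiched in between. The first extraction is immediate: as $wf$ vanishes outside $\supp f$,
\[
\int wf \cdot A \cdot B \cdot C \le \delta \int wf \cdot A \cdot B,
\]
so it suffices to prove $\int wf \cdot A \cdot B \le \delta \int w f^{2}$.

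For the inner inequality I split the sum as $\sum_{\alpha_{2}} w_{2}(\alpha_{2}) \sum_{\alpha_{1}} w_{1} f \cdot I_{1}(w_{1} f) \cdot B$. The duality identity $\sum_{\alpha_{1}} G \cdot I_{1} H = \sum_{\alpha_{1}} I_{1}^{*}G \cdot H$ (with $G = w_{1}fB$, $H = w_{1}f$) recasts the inner sum as $\sum_{\alpha_{1}} I_{1}^{*}(w_{1} f B) \cdot w_{1} f$. Now $f$ is superadditive in $\alpha_{1}$, so $\Delta_{1} f \ge 0$ and Corollary~\ref{cor:1}, applied in the $1$-tree $T_{1}$ to the factorisation $w_{1}fB = f \cdot (w_{1}B)$, gives
\[
I_{1}^{*}(f \cdot w_{1} B) \le I_{1}^{*}\bigl( \Delta_{1} f \cdot I_{1}(w_{1} B) \bigr) = I_{1}^{*}(\Delta_{1} f \cdot C).
\]
Dualizing back, the inner sum is bounded by $\sum_{\alpha_{1}} \Delta_{1} f \cdot C \cdot I_{1}(w_{1} f) = \sum_{\alpha_{1}} \Delta_{1} f \cdot C \cdot A$.

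Superadditivity forces $\supp \Delta_{1} f \subseteq \supp f$, so the support hypothesis applies a second time and replaces $C$ by $\delta$. The remaining sum $\sum_{\alpha} w_{2} \cdot \Delta_{1} f \cdot I_{1}(w_{1} f)$ collapses via one more duality in $\alpha_{1}$ combined with the basic identity $I_{1}^{*}(\Delta_{1} f) = f$ (used already in the proof of Lemma~\ref{lem:partial-summation}) to exactly $\int w f^{2}$, delivering the second factor of $\delta$. The main conceptual point, and what I expect to be the principal obstacle when pushing the same scheme to three parameters, is arranging the manipulation so that Corollary~\ref{cor:1} can be applied in a single coordinate while the remaining $B$-factor participates as part of the auxiliary function $G$; it is precisely the tensor structure of $w$ that makes the identity $I_{1}(w_{1} B) = C$ available, and hence permits the $\Delta_{1}$ coming out of Corollary~\ref{cor:1} to be paired with the factor $C$ needed for the second $\delta$-extraction.
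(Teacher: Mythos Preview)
Your argument is correct and follows the same route as the paper's proof: first pull one factor of $\delta$ from the support condition, then dualize in the first coordinate, apply Corollary~\ref{cor:1} to produce $I_{1}^{*}(\Delta_{1}f\cdot \bbI(wf))$, pull the second factor of $\delta$ using $\supp\Delta_{1}f\subseteq\{\bbI(wf)\le\delta\}$, and finish with $I_{1}^{*}(\Delta_{1}f)=f$. The only cosmetic difference is that the paper organizes the middle steps as a single pointwise inequality
\[
I_{1}^{*}\bigl(f\cdot I_{2}(wf)\bigr)\le\delta f,
\]
which it records separately (equation~\eqref{eq:4}) because this bound is reused verbatim in the proof of Lemma~\ref{lem:en2:2}; your ``dualize back'' presentation yields the same estimate but does not isolate it.
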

\begin{proof}
By the hypothesis, the left-hand side of the conclusion is
\begin{equation}
\label{eq:6}
\begin{split}
&\leq
\delta \int_{T^{2}} wf \cdot I_{1}(w_{1}f) \cdot I_{2}(w_{2}f)
\\ &=
\delta \int_{T^{2}} w_{1}f \cdot I_{1}(wf) \cdot I_{2}(w_{2}f)
\\ &=
\delta \int_{T^{2}} wf \cdot I_{1}^{*}(w_{1}f \cdot I_{2}(w_{2}f))
\\ &=
\delta \int_{T^{2}} wf \cdot I_{1}^{*}(f \cdot I_{2}(wf))
\end{split}
\end{equation}
By Corollary~\ref{cor:1}, we have
\[
I_{1}^{*}(f \cdot I_{2}(wf))
\leq
I_{1}^{*}(\Delta_{1}f \cdot I_{1}I_{2}(wf)).
\]
Since $\Set{\bbI (wf) \leq \delta}$ is an up-set, $\Delta_{1}f$ is supported on this set.
Since $f$ is superadditive, $\Delta_{1} f\geq 0$.
Hence,
\begin{equation}
\label{eq:4}
I_{1}^{*}(f \cdot I_{2}(wf))
\leq
I_{1}^{*}(\Delta_{1} f \cdot \bbI(wf))
\leq
I_{1}^{*}(\Delta_{1} f \cdot \delta)
=
\delta f.
\end{equation}
Inserting \eqref{eq:4} into \eqref{eq:6}, we obtain the claim.
\end{proof}

\begin{lemma}
\label{lem:en2:2}
Let $T^{2}$ be a $2$-tree and $f:T^2\to  [0, \infty)$ a function that is superadditive in each parameter separately.
\hypothesistensor{2}
Suppose that $\supp f \subseteq \Set{ \bbI (wf) \leq \delta }$.
Then
\[
\int_{T^{2}} w (I_{1}w_{1}f)^{2} (I_{2}w_{2}f)^{2} \leq 4\delta^{2} \int_{T^{2}} w f^{2}.
\]
\end{lemma}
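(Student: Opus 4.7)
The plan is to reduce this to Corollary~\ref{cor:1} through three short moves, after setting $A := I_1(w_1 f)$ and $B := I_2(w_2 f)$ so that the target integrand is $w A^2 B^2$.

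First I will dominate each square separately. Applying Lemma~\ref{lem:split} in $I_1$ with both arguments equal to $w_1 f$ gives $A^2 \leq 2 I_1(A \cdot w_1 f)$, and symmetrically $B^2 \leq 2 I_2(B \cdot w_2 f)$. Multiplying these pointwise bounds yields
\[
A^2 B^2 \leq 4\, I_1(w_1 f \cdot A) \cdot I_2(w_2 f \cdot B).
\]

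Second, I will shift the Hardy operators onto the opposite factors via the $I_j\leftrightarrow I_j^*$ duality. This is where the tensor product structure $w=w_1 w_2$ enters: since $w_2$ (resp.\ $w_1$) is independent of the first (resp.\ second) coordinate, and $I_1^*, I_2$ commute because they act in different coordinates, a routine rearrangement should give
\[
\int_{T^{2}} w A^2 B^2 \leq 4 \int_{T^{2}} w \cdot I_1^*(w_1 f \cdot B) \cdot I_2^*(w_2 f \cdot A).
\]

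Third, I will control each adjoint factor by the support hypothesis. A direct computation shows $I_1(w_1 B) = \bbI(wf)$. Viewing the product as $f \cdot (w_1 B)$ and using that $f$ is superadditive in the first coordinate, Corollary~\ref{cor:1} gives
\[
I_1^*(w_1 f \cdot B) \leq I_1^*(\Delta_1 f \cdot \bbI(wf)).
\]
Since $\Delta_1 f \geq 0$ is supported in $\supp f \subseteq \{\bbI(wf)\leq\delta\}$, this is at most $\delta\, I_1^*(\Delta_1 f) = \delta f$ by the telescoping identity used in Lemma~\ref{lem:partial-summation}. By symmetry, $I_2^*(w_2 f \cdot A) \leq \delta f$. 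Substituting both bounds into the inequality from Step 2 produces the stated constant $4\delta^{2}\int wf^{2}$.

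The only step requiring care is the adjoint juggling in Step 2: one must track the placement of each $w_j$ and exploit the commutation of $I_1^*$ with $I_2$. Once this bookkeeping is done, the conclusion is immediate from Corollary~\ref{cor:1} and the support hypothesis, exactly as in Lemma~\ref{lem:en2:1}.
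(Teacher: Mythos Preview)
Your proof is correct and follows essentially the same route as the paper's: both apply Lemma~\ref{lem:split} coordinatewise to dominate the squares, then use the tensor-product structure of $w$ together with the $I_j\leftrightarrow I_j^*$ duality to arrive at $4\int_{T^2} w\, I_1^*(f\cdot I_2(wf))\, I_2^*(f\cdot I_1(wf))$, and finally invoke Corollary~\ref{cor:1} plus the support hypothesis to bound each adjoint factor by $\delta f$ (this is exactly the estimate \eqref{eq:4}). Your justification that $\supp \Delta_1 f \subseteq \supp f$ is in fact valid since $\Delta_1 f \geq 0$ forces $f>0$ wherever $\Delta_1 f>0$; the paper phrases this via the up-set property instead, but the content is the same.
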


\begin{proof}
By Lemma~\ref{lem:split} and commutativity of operations in different coordinates,
\begin{equation}
\label{eq:3}
\begin{split}
\int_{T^{2}} w(I_{1}w_{1}f)^{2} (I_{2}w_{2}f)^{2}
&\leq
4 \int_{T^{2}} w I_{1}(w_{1}f \cdot I_{1}(w_{1}f)) \cdot I_{2}(w_{2}f \cdot I_{2}(w_{2}f))
\\ &=
4 \int_{T^{2}} I_{1}(w_{1}f \cdot I_{1}(wf)) \cdot I_{2}(w_{2}f \cdot I_{2}(wf))
\\ &=
4 \int_{T^{2}} I_{2}^{*}(w_{1}f \cdot I_{1}(wf)) \cdot I_{1}^{*}(w_{2}f \cdot I_{2}(wf))
\\ &=
4 \int_{T^{2}} w I_{2}^{*}(f \cdot I_{1}(wf)) \cdot I_{1}^{*}(f \cdot I_{2}(wf)).
\end{split}
\end{equation}
Using \eqref{eq:4}, we obtain the claim.
\end{proof}

The next results improve \cite[Lemma 4.1]{AMPVZ}.

\begin{lemma}[Small energy majorization on bi-tree]
\label{SmEMaj2}
Let $T^{2}$ be a $2$-tree and $f:T^2\to  [0, \infty)$ a function that is superadditive in each parameter separately.
\hypothesistensor{2}
Suppose that $\supp f \subseteq \Set{ \bbI (wf) \leq \delta }$. Let $\la \ge 4 \delta$.
Then there exists $\varphi:T^2\to  [0, \infty)$ such that
\[
a)\, \,\bI w\varphi \ge \bI wf, \,\, \text{where}\,\, \bI wf \in [\la, 2\la],
\]
\[
b) \int_{T^2} w \varphi^2 \le C\frac{\delta^2}{\la^2} \int_{T^2} w f^2,
\]
where $C$ is an absolute constant.
\end{lemma}

\begin{proof}
Since $2\delta\lambda^{-1} \leq 1/2$, we have
\begin{align*}
(\bbI f) \one_{\bbI f \geq \lambda}
&\leq
4\lambda^{-1} \bbI \Bigl( I_{1} f \cdot I_{2}f \Bigr)
\end{align*}
And thus
\[
(\bbI f) \one_{\la \le \bbI f \leq 2\lambda} \le 4\lambda^{-1} \bbI \Bigl( I_{1} f \cdot I_{2}f \Bigr) \one_{\la \le \bbI f \leq 2\lambda} \le 4\lambda^{-1} \bbI \Bigl( I_{1} f \cdot I_{2}f \cdot \one_{ \bbI f \leq 2\lambda} \Bigr)
\]
Put
\[
\varphi:= 4\lambda^{-1}  \Bigl( I_{1} f \cdot I_{2}f\cdot {\bf 1}_{\bI f \le 2\la}\Bigr)
\]
Then $\varphi$ does already satisfy  condition a) of the statement of the lemma. Now apply Lemma \ref{lem:en2:2} to see that condition b) of the statement of the lemma is satisfied as well.
\end{proof}

\subsubsection{The lack of maximal principle and the capacity of bad sets}
\label{degree}
In \cite{AMPVZ} we proved the analogous small energy majorization statement  on bi-tree $T^2$ but with $\frac{\delta}{\lambda}$ in the right hand side of b).

Let us see why we care. Let $\mu$ be a measure on $\pd T^2$ and let it potential $\bV^\mu \le 1$ on $\supp\mu$. In the  ``usual'' potential theory the maximal principle would  imply
that potential $\bV^\mu\le 1$ everywhere (or at least that $\bV^\mu\le C$ with absolute constant $C$, see \cite{AH}).  

This is not true for potential theory on multi-trees. The reader can find the counterexamples in \cite{AMPVZ}.

The natural question arises: given $\la>> 1$, what is the size of the set $\{ \bV^\mu \ge \la\}$.  Let us introduce the usual notion of capacity on $T^2$. 
Given a set $E$ we consider all $\varphi$ such that $\bI \vf \ge 1$ on $E$ and 
\[
\text{cap} (E):= \inf\int_{T^2} \vf^2
\]
where infimum is taken over such $\vf$. So one would like to estimate the capacity of the bad set $\text{cap}(\{ \bV^\mu >\la\})$ in terms of  $\la$, if $\bV^\mu \le 1$ on $\supp\mu$.

\begin{theorem}
\label{cap2}
Let  us be on $T^2$ and $\bV^\mu \le 1$ on $\supp \mu$. Then
\[
\text{cap}(\{ \bV^\mu >\la\}) \le \frac{C\mathcal{E}[\mu]}{\la^4}
\] 
for $\la \ge 1$, where $C$ is an absolute constant.
\end{theorem}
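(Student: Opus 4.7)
\medskip

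\noindent\textbf{Proof plan.} The natural strategy is a dyadic decomposition of the bad super-level set $\{\bfV^\mu > \lambda\}$ combined with the small energy majorization Lemma~\ref{SmEMaj2} applied at each scale, exploiting the new gain $\delta^2/\lambda^2$ (rather than the weaker $\delta/\lambda$ from \cite{AMPVZ}) to produce the $\lambda^{-4}$ improvement.

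Set $f := \bfI^{*}\mu$. This choice is dictated by the identity $\bbI(wf) = \bfV^\mu$, and it has the two properties demanded by Lemma~\ref{SmEMaj2}. First, $f$ is superadditive in each parameter separately: fixing the second coordinate and summing $\mu$ over the corresponding down-set in the second variable, $f$ becomes $I_{1}^{*}$ of a non-negative function on $T_{1}$, and for any such function one has $\Delta(I^{*}\nu) = \nu \geq 0$; the argument in the second parameter is symmetric. Second, since $\bfV^\mu = \bbI(w\bfI^{*}\mu)$ is monotone non-increasing in the tree order on $T^{2}$ (as the $\bbI$-image of a non-negative function), the hypothesis $\bfV^\mu \leq 1$ on $\supp\mu$ automatically propagates up to all of $\supp f = \supp \bfI^{*}\mu$. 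Thus Lemma~\ref{SmEMaj2} is applicable with $\delta = 1$.

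For each integer $k \geq 0$, set $\lambda_k := 2^{k}\lambda$ and invoke Lemma~\ref{SmEMaj2} at level $\lambda_k$ to produce $\varphi_k : T^2 \to [0,\infty)$ majorizing $\bfV^\mu$ on the layer $\{\bfV^\mu \in [\lambda_k, 2\lambda_k]\}$ and satisfying
\[
\int_{T^{2}} w\,\varphi_k^{2} \;\leq\; C\,\lambda_k^{-2}\int_{T^{2}} w f^{2} \;=\; C\,\lambda_k^{-2}\,\calE[\mu].
\]
Now assemble
\[
\varphi := \sum_{k \geq 0} \lambda_k^{-1}\varphi_k.
\]
On the bad set, pick $k$ with $\bfV^\mu \in [\lambda_k, \lambda_{k+1}]$; then $\bbI\varphi \geq \lambda_k^{-1}\bbI\varphi_k \geq \lambda_k^{-1}\bfV^\mu \geq 1$, so $\varphi$ is admissible for $\capp(\{\bfV^\mu > \lambda\})$. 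By Minkowski in $L^{2}(w)$,
\[
\Bigl(\int w\varphi^{2}\Bigr)^{1/2} \leq \sum_{k \geq 0} \lambda_k^{-1}\Bigl(\int w\varphi_k^{2}\Bigr)^{1/2} \leq \sqrt{C\calE[\mu]}\sum_{k\geq 0}\lambda_k^{-2} \lesssim \lambda^{-2}\sqrt{\calE[\mu]},
\]
and squaring yields $\capp(\{\bfV^\mu > \lambda\}) \lesssim \calE[\mu]/\lambda^{4}$.

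The crux is really contained in Lemma~\ref{SmEMaj2}: the squared gain $(\delta/\lambda)^{2}$ is exactly what makes $\sum \lambda_k^{-2}$ convergent and is squared once more by the admissibility rescaling, producing the $\lambda^{-4}$ exponent; the older bound $\delta/\lambda$ only gives a summable series after taking square roots and therefore only yields $\lambda^{-2}$, which is the statement in \cite{AMPVZ}. A minor point to tidy up is the borderline regime where $\lambda$ is comparable to $1$, so that $\lambda_0 = \lambda$ may not satisfy $\lambda_0 \geq 4\delta = 4$; this is handled by absorbing a constant into $C$ and starting the summation from the first $k$ with $\lambda_k \geq 4$, since $\capp(\{\bfV^\mu > \lambda\}) \leq \capp(\{\bfV^\mu > \lambda'\})$ is not monotone in the right direction but the missing layers are covered by enlarging $\varphi$ by a bounded factor.
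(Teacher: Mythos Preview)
Your proof is correct and follows essentially the same route as the paper's: set $f=\bfI^{*}\mu$, verify the support and superadditivity hypotheses, apply Lemma~\ref{SmEMaj2} at each dyadic scale $\lambda_k=2^{k}\lambda$, sum $\varphi=\sum_k\lambda_k^{-1}\varphi_k$, and use Minkowski to get the $\lambda^{-4}$ bound. Your exposition is in fact more careful than the paper's (which glosses over superadditivity and the borderline $\lambda<4$ issue), though your final paragraph on that borderline case is a bit muddled---the cleanest fix is simply to note that for $1\le\lambda<4$ one may replace $\lambda$ by $4$ at the cost of an absolute constant in $C$.
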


\begin{proof}
Consider $f=\bI^* \mu$, $\delta=1$.  If $f(\al) \neq 0$ then there is $\beta\le \al$ such that $\beta\in \supp \mu$. But then by assumption $\bI f(\beta)= \bI\bI^* \mu(\beta) = \bV^\mu(\beta) \le 1$. By monotonicity of $\bI$ we have that $\bI f(\al) \le 1$. Hence
\[
\supp f \subset \{\bI f \le \delta=1\},
\]
and we are in the assumptions of  small energy majorization Lemma  on bi-tree \ref{SmEMaj2}.  We apply it with data $(f, \delta=1, \la:= 2^m\la)$ to get functions
$\vf_m$, $m=0, 1, \dots$ such that
\[
\bI \vf_m  \ge \bI f=\bV^{\mu}, \quad \text{where}\,\,  \bV^{\mu}\in [2^m \la, 2^{m+1}\la],
\]
which means that 
\[
2^{-m} \la^{-1}\bI \vf_m  \ge 1, \quad \text{where}\,\,  \bV^{\mu}\in [2^m \la, 2^{m+1}\la],
\]
On the other hand, putting $\vf :=\sum_m 2^{-m} \la^{-1} \vf_m$, we get
firstly
\[
\bI \vf \ge 1, \quad \text{where}\,\,  \bV^{\mu}\in [ \la, \infty),
\]
and secondly
\begin{equation*}
\begin{split}
&\int\vf^2 \le \Big(\la^{-1}\sum_m 2^{-m} \big(\int_{T^2} \vf_m^2\big)^{1/2}\Big)^2 \le  
\\
&C\,\Big(\la^{-1}\sum_m \la^{-1}2^{-2m}( \int_{T^2}  f^2)^{1/2}\Big)^2 \le C'\, \la^{-4} \int_{T^2}  f^2
\end{split}
\end{equation*}
As $f=\bI^* \mu$, $\int_{T^2} f^2 =  \int_{T^2} \bI^*\mu \bI^* \mu = \int_{T^2} \bI\bI^* \mu d\mu = \int_{T^2} \bV^\mu d\mu =\cE[\mu]$, which proves theorem.
\end{proof}

\begin{remark}
We do not know how precise is the rate $\la^{-4}$ in Theorem \ref{cap2}. We do not even know whether the sharp rate should be polynomial or exponential. What we do know (see \cite{AMPVZ}) is that  for any large $\la$ there exists a measure $\mu$, such that $\bV^\mu \le 1 $ on $\supp  \mu$ but with positive absolute constant $c$ the following holds
\begin{equation}
\label{capBelow}
\text{cap}(\{ \bV^\mu >\la\}) \ge c e^{-2\la}\,.
\end{equation}
\end{remark}

\subsubsection{Continuation of energy estimates}
\label{continEE2}
\begin{lemma}
\label{lem:bVdelta:2}
Let $\mu,\rho$ be positive measures on $T^{2}$ and $\delta>0$.
\hypothesistensor{2}
Then
\begin{equation}
\Bigl( \int \bbV_{\delta}^{\mu} \dif\rho \Bigr)^{4}
\leq
28 \cdot \delta^{2} \calE_\delta[\mu] \calE[\rho] \abs{\rho}^{2}.
\end{equation}
\end{lemma}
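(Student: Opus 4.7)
The plan is to treat $A := \int \bbV_\delta^\mu \dif\rho$ by deriving a self-referential quadratic inequality for it via the splitting identity Lemma~\ref{lem:split2}, and then close the loop with the trivial Cauchy--Schwarz energy bound.

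I would begin by setting $f := \one_{\{\bbV^\mu\le\delta\}}\bbI^*\mu$. Because $\bbV^\mu$ is antitone on $T^2$, its sublevel set $\{\bbV^\mu\le\delta\}$ is an up-set, and this forces $f$ to be superadditive in each parameter: at any vertex where $f>0$, the values at its children are either zero or coincide with the (itself superadditive) function $\bbI^*\mu$, while at a vertex where $f=0$ the same happens at all its children by antitonicity. The inequality $\bbI(wf)\le\bbI(w\bbI^*\mu)=\bbV^\mu\le\delta$ on $\supp f$ is immediate, so the hypotheses of Lemma~\ref{lem:en2:2} are satisfied. Moreover $\int wf^2=\calE_\delta[\mu]$ and by duality $A=\int wf\cdot\bbI^*\rho$, so the standard Cauchy--Schwarz gives the auxiliary bound $A\le\calE_\delta[\mu]^{1/2}\calE[\rho]^{1/2}$.

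Next I would apply Cauchy--Schwarz against $\rho$ in the form $A^2\le\abs{\rho}\int(\bbI(wf))^2\dif\rho$. Lemma~\ref{lem:split2} with both arguments equal to $wf$ produces the pointwise bound
\[
(\bbI(wf))^2 \le 2\,\bbI(wf\cdot\bbI(wf))+2\,\bbI(I_1(wf)\cdot I_2(wf)).
\]
Integrating against $\rho$, moving $\bbI$ onto $\rho$ by duality, using $\bbI(wf)\le\delta$ on $\supp wf$, and invoking the tensor factorizations $I_j(wf)=w_{3-j}I_j(w_jf)$, I would obtain
\[
\int(\bbI(wf))^2\dif\rho \le 2\delta A+2B, \qquad B:=\int w\,I_1(w_1f)\,I_2(w_2f)\,\bbI^*\rho.
\]
A further Cauchy--Schwarz on $B$ combined with Lemma~\ref{lem:en2:2} gives $B\le 2\delta\sqrt{\calE_\delta[\mu]\calE[\rho]}$, and altogether the key quadratic inequality
\[
A^2 \le 2\delta\abs{\rho}\cdot A+4\delta\abs{\rho}\sqrt{\calE_\delta[\mu]\calE[\rho]}.
\]

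The main obstacle is closing the loop in the self-referential term $2\delta\abs{\rho}\cdot A$. The cheapest resolution is to substitute the auxiliary bound $A\le\sqrt{\calE_\delta[\mu]\calE[\rho]}$ into that single occurrence of $A$, yielding $A^2\le 6\delta\abs{\rho}\sqrt{\calE_\delta[\mu]\calE[\rho]}$ and, upon squaring, $A^4\le 36\,\delta^2\calE_\delta[\mu]\calE[\rho]\abs{\rho}^2$; the stated constant $28$ is presumably obtained by slightly sharper bookkeeping (e.g.\ by using AM--GM on the cross term $2\delta\abs{\rho}A$ before invoking the auxiliary bound). Conceptually, Lemma~\ref{lem:en2:2} serves as the surrogate for the maximum principle (which genuinely fails on bi-trees), supplying the $\delta^2$ factor that allows us to combine an $L^2$-type estimate with the trivial energy bound and so obtain the desired mixed inequality.
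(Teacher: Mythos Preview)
Your argument is correct and follows the same skeleton as the paper's proof (define $f=\one_{\{\bbV^\mu\le\delta\}}\bbI^*\mu$, Cauchy--Schwarz against $\rho$, split $(\bbI(wf))^2$ via Lemma~\ref{lem:split2}, pass to $\bbI^*\rho$ by duality), but the handling of the diagonal term $wf\cdot\bbI(wf)$ differs. You treat it self-referentially, getting $2\delta\abs{\rho}A$ and then feeding in the trivial bound $A\le(\calE_\delta[\mu]\calE[\rho])^{1/2}$; this yields the constant $36$ rather than $28$, and Lemma~\ref{lem:en2:1} is never used. The paper instead keeps the sum $I_1(w_1f)I_2(w_2f)+f\,\bbI(wf)$ intact, applies one more Cauchy--Schwarz against $w^{1/2}\bbI^*\rho$ to pull out $\calE[\rho]^{1/4}$, and then expands the square: $\int w(I_1(w_1f)I_2(w_2f))^2\le 4\delta^2\int wf^2$ by Lemma~\ref{lem:en2:2}, the cross term $2\int wf\,I_1(w_1f)I_2(w_2f)\bbI(wf)\le 2\delta^2\int wf^2$ by Lemma~\ref{lem:en2:1}, and $\int wf^2(\bbI(wf))^2\le\delta^2\int wf^2$ trivially, giving $7\delta^2$ and hence $28$. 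So your speculation about ``sharper bookkeeping'' is not quite right --- the paper's route is structurally a bit different and uses an extra lemma --- but your proof is complete as stated, just with a slightly larger absolute constant.
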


\begin{proof}
Let $f := \one_{\bbV^{\mu} \leq \delta} \bbI^{*}\mu$.
Then
\begin{align*}
\int \bbV_{\delta}^{\mu} \dif\rho
&=
\int \bbI (wf) \dif\rho
\\ &\leq
\abs{\rho}^{1/2} \Bigl( \int (\bbI (wf))^{2} \dif\rho \Bigr)^{1/2}
\intertext{by Lemma~\ref{lem:split2},}
&\leq
\abs{\rho}^{1/2} \Bigl( 2 \int \bbI(I_{1}(wf) \cdot I_{2}(wf) + (wf) \cdot \bbI (wf)) \dif\rho \Bigr)^{1/2}
\\ &=
2^{1/2} \abs{\rho}^{1/2} \Bigl( \int w (I_{1}(w_{1}f) \cdot I_{2}(w_{2}f) + f \cdot \bbI (wf)) \bbI^{*}\rho \Bigr)^{1/2}
\\ &\leq
2^{1/2} \abs{\rho}^{1/2} \calE[\rho]^{1/4} \Bigl( \int w (I_{1}(w_{1}f) \cdot I_{2}(w_{2}f) + f \cdot \bbI (wf))^{2} \Bigr)^{1/4}
\intertext{expanding the square and using Lemma~\ref{lem:en2:1} and Lemma~\ref{lem:en2:2},}
&\leq
2^{1/2} \abs{\rho}^{1/2} \calE[\rho]^{1/4} \Bigl( 7 \delta^{2} \int w f^{2} \Bigr)^{1/4}
\\ &=
28^{1/4} \abs{\rho}^{1/2} \calE[\rho]^{1/4} \delta^{1/2} \calE_{\delta}[\mu]^{1/4}.
\qedhere
\end{align*}
\end{proof}

\subsection{3-trees}
Similarly to Lemma~\ref{lem:split2}, we obtain the following result for $3$-trees.
\begin{lemma}
\label{lem:split3}
Let $T^{3}$ be a $3$-tree and $f, g :T^3\to  [0, \infty)$.
Then
\[
(\bfI f)(\bfI g)
\leq
\bfI \Bigl( \sum_{A \subseteq \Set{1,2,3}} I_{A} f \cdot I_{A^{c}}g \Bigr),
\]
where $I_{A} = \prod_{i\in A}I_{i}$.
\end{lemma}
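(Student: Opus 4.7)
The plan is to imitate the proof of Lemma~\ref{lem:split2}, iterating the one-tree split (Lemma~\ref{lem:split}) once per coordinate. Write $\bfI = I_1 I_2 I_3$, and use that $I_1, I_2, I_3$ commute and each acts in its own coordinate.

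First I would apply Lemma~\ref{lem:split} in the first coordinate to the product $(I_1(I_2 I_3 f))(I_1(I_2 I_3 g))$, obtaining
\[
(\bfI f)(\bfI g) \leq I_1\bigl( (I_1 I_2 I_3 f)\cdot (I_2 I_3 g) + (I_2 I_3 f)\cdot(I_1 I_2 I_3 g) \bigr).
\]
Each of the two products in the bracket has the shape $(\bbI' F)(\bbI' G)$ with $\bbI' = I_2 I_3$ acting on the bi-tree formed by the last two coordinates. Applying Lemma~\ref{lem:split2} (with $F = I_1 f$, $G = g$ in the first summand, and $F = f$, $G = I_1 g$ in the second) expands each into a sum of four terms of the form $I_A f \cdot I_{A^c} g$: the first summand produces those $A$ with $1\in A$, and the second produces those $A$ with $1\notin A$. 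Summing the eight terms and noting $I_1 \bbI' = \bfI$ yields the claim.

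Alternatively, one can just apply Lemma~\ref{lem:split} three times in succession (once per coordinate), doubling the number of terms at each step from $1$ to $2$ to $4$ to $8$; the bookkeeping is identical, and an easy induction on $n$ would give the $n$-tree version with $2^n$ terms indexed by subsets $A \subseteq \{1,\dots,n\}$. I prefer the route through Lemma~\ref{lem:split2} since it makes the structural analogy explicit.

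There is essentially no substantive obstacle: commutativity of the $I_i$ and the fact that Lemma~\ref{lem:split} takes $(If)(Ig)$ to a sum of two products of the same shape mean the argument is purely combinatorial. The only point to watch is the correspondence between the eight terms produced and the eight subsets $A \subseteq \{1,2,3\}$, which is what allows the conclusion to be packaged in the clean $\sum_{A\subseteq\{1,2,3\}} I_A f \cdot I_{A^c} g$ form.
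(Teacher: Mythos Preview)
Your proof is correct and matches the paper's approach exactly: the paper itself does not spell out a separate proof for this lemma but simply states that it is obtained ``Similarly to Lemma~\ref{lem:split2}'', which is precisely the iteration of Lemma~\ref{lem:split} coordinate by coordinate that you carry out.
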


\begin{corollary}
\label{cor:upper-bd:3}
Let $0 < \delta \leq \lambda/4$.
Let $f:T^3\to  [0, \infty)$ with $\supp f \subseteq \Set{\bfI f \leq \delta}$.
Then
\[
(\bfI f) \one_{\lambda \leq \bfI f \leq 2 \lambda}
\leq
4\lambda^{-1} \bfI \Bigl( \sum_{i \in \Set{1,2,3}} I_{i} f \cdot I_{(i)}f \cdot \one_{\bfI f \leq 2 \lambda} \Bigr),
\]
where $I_{(i)} = \prod_{j\neq i}I_{j}$.
\end{corollary}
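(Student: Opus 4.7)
The plan is to mirror the proof of the $2$-tree corollary that precedes Lemma~\ref{SmEMaj2}, replacing Lemma~\ref{lem:split2} with Lemma~\ref{lem:split3}. The first step is to apply Lemma~\ref{lem:split3} with $g=f$. The sum over $A \subseteq \{1,2,3\}$ consists of eight terms, which pair into four complementary pairs $(A,A^{c})$; the diagonal pair $(\emptyset,\{1,2,3\})$ contributes $2 f \cdot \bfI f$, and each of the three pairs $(\{i\},\{1,2,3\}\setminus\{i\})$ contributes $2\,I_{i}f \cdot I_{(i)}f$. Hence
\[
(\bfI f)^{2} \leq 2\,\bfI\Bigl( f \cdot \bfI f + \sum_{i=1}^{3} I_{i}f \cdot I_{(i)}f \Bigr).
\]

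Next, the support hypothesis gives the pointwise bound $f \cdot \bfI f \leq \delta f$, and on $\{\bfI f \geq \lambda\}$ we have $\bfI f \leq \lambda^{-1}(\bfI f)^{2}$. Combining these with the previous display,
\[
(\bfI f)\,\one_{\bfI f \geq \lambda}
\leq
2\delta\lambda^{-1}\,\bfI f \cdot \one_{\bfI f \geq \lambda}
+ 2\lambda^{-1}\,\bfI\Bigl( \sum_{i=1}^{3} I_{i}f \cdot I_{(i)}f \Bigr)\one_{\bfI f \geq \lambda}.
\]
Since $\delta \leq \lambda/4$, the coefficient $2\delta\lambda^{-1}$ is at most $1/2$, so the first summand on the right can be absorbed into the left-hand side; off $\{\bfI f \geq \lambda\}$ the LHS is zero and nothing is to be proved. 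Dropping the trailing indicator on the right yields the unconditional estimate
\[
(\bfI f)\,\one_{\bfI f \geq \lambda} \leq 4\lambda^{-1}\,\bfI\Bigl( \sum_{i=1}^{3} I_{i}f \cdot I_{(i)}f \Bigr).
\]

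The final step is to push the upper-level cutoff inside the outer Hardy operator. Because $\bfI$ sums over ancestors, $\bfI f$ is non-increasing along the order of $T^{3}$, so $U := \{\bfI f \leq 2\lambda\}$ is an up-set. For any up-set $U$ and any non-negative $X$ one has the pointwise inequality $\one_{U}\cdot\bfI X \leq \bfI(X\one_{U})$: for $\alpha \in U$ every ancestor $\beta \geq \alpha$ again lies in $U$, so $\one_{U}(\alpha)\bfI X(\alpha) = \bfI(X\one_{U})(\alpha)$, while off $U$ the LHS vanishes. Multiplying the preceding display by $\one_{\bfI f \leq 2\lambda}$ and applying this observation with $X = \sum_{i} I_{i}f \cdot I_{(i)}f$ gives the claim. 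I do not foresee a serious obstacle; the only items requiring real care are the correct pairing of the eight subsets in the opening step and the up-set manipulation in the closing step, both of which are routine.
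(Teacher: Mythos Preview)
Your proof is correct and follows essentially the same route as the paper's: apply Lemma~\ref{lem:split3} with $g=f$, use the support condition to replace $f\cdot\bfI f$ by $\delta f$, absorb the resulting $2\delta\lambda^{-1}\bfI f$ term using $\delta\leq\lambda/4$, and then push the indicator $\one_{\bfI f\leq 2\lambda}$ inside $\bfI$ via the up-set observation. The only cosmetic difference is that you first obtain the bound on $\{\bfI f\geq\lambda\}$ and then restrict, whereas the paper works directly on $\{\lambda\leq\bfI f\leq 2\lambda\}$; your up-set argument in the final step is spelled out more explicitly than the paper's but is exactly the same mechanism.
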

\begin{proof}
Substituting $f=g$, Lemma~\ref{lem:split3} implies that
\[
(\bfI f)^{2}
\leq
\bfI \Bigl( 2\sum_{i \in \Set{1,2,3}} I_{i} f \cdot I_{(i)}f + 2f \cdot \bfI f \Bigr).
\]

Using the support condition, this implies
\begin{align*}
(\bfI f) \one_{\lambda \leq \bfI f \leq 2 \lambda}
&\leq
\lambda^{-1} (\bfI f)^{2} \one_{\lambda \leq \bfI f \leq 2 \lambda}
\\ &\leq
\lambda^{-1} \bfI \Bigl( 2\sum_{i \in \Set{1,2,3}} I_{i} f \cdot I_{(i)}f + 2\delta f \Bigr)
\\ &\leq
\lambda^{-1} \bfI \Bigl( 2\sum_{i \in \Set{1,2,3}} I_{i} f \cdot I_{(i)}f \Bigr) + 2\delta\lambda^{-1} \bfI f.
\end{align*}
Since $2\delta\lambda^{-1} \leq 1/2$, this implies
\begin{align*}
(\bfI f) \one_{\lambda \leq \bfI f \leq 2 \lambda}
&\leq
2\lambda^{-1} \bfI \Bigl( 2\sum_{i \in \Set{1,2,3}} I_{i} f \cdot I_{(i)}f \Bigr) \one_{\lambda \leq \bfI f \leq 2 \lambda}
\\ &\leq
2\lambda^{-1} \bfI \Bigl( 2\sum_{i \in \Set{1,2,3}} I_{i} f \cdot I_{(i)}f \cdot \one_{\bfI f \leq 2 \lambda} \Bigr).
\qedhere
\end{align*}
\end{proof}

\subsubsection{Energy bound}
\begin{lemma}
\label{lem:energy-bd:3}
Let $f:T^3\to  [0, \infty)$ be superadditive.
Let $w:T^3\to  [0, \infty)$ be a tensor product.
Suppose that $\supp f \subseteq \Set{\bfI (wf) \leq \delta}$.
Then
\[
\int w (I_{1}(w_{1}f) \cdot I_{2}I_{3}(w_{2}w_{3}f))^{2} \one_{\bfI (wf) \leq \lambda}
\leq
2\delta\lambda \int w f^{2}.
\]
\end{lemma}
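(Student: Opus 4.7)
The plan is to adapt the $2$-tree Lemma \ref{lem:en2:2} argument, viewing the pair $(I_1, I_{23})$ as a two-component Hardy system on $T_1 \times (T_2 \times T_3)$. Writing $g_1 := I_1(w_1 f)$ and $g_{23} := I_{23}(w_2 w_3 f)$, I would first apply Lemma \ref{lem:split} in coordinate~$1$ to get $g_1^2 \le 2 I_1(w_1 f \cdot g_1)$, and Lemma \ref{lem:split2} on the sub-$2$-tree to get
\[
g_{23}^2 \le 2 I_{23}(w_2 w_3 f \cdot g_{23}) + 2 I_{23}\bigl(I_2(w_2 w_3 f) \cdot I_3(w_2 w_3 f)\bigr).
\]
Multiplying these two pointwise inequalities and integrating against $w \cdot \one_{\bfI(wf)\le\lambda}$ splits the LHS into a ``diagonal'' and a ``cross'' term.

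For the diagonal term, I would use the tensor structure $w = w_1 w_2 w_3$ (pulling $w_2w_3$ inside the outer $I_1$ and $w_1$ inside $I_{23}$), then take the pairing adjoint $\int I_1(A)\cdot I_{23}(B) = \int I_{23}^{*}(A)\cdot I_1^{*}(B)$, and invoke Corollary~\ref{cor:1} in coordinate~$1$ to get $I_1^{*}(f\cdot I_{23}(wf)) \le \delta f$ (using $\Delta_1 f\ge 0$, $I_1^{*}(\Delta_1 f)=f$, and $\bfI(wf)\le\delta$ on $\supp f$, exactly as in \eqref{eq:4}). A symmetric statement $I_{23}^{*}(f\cdot I_1(wf))\le \delta f$ should follow by iterating Corollary~\ref{cor:1} in the $(2,3)$-block. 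The diagonal term would then be controlled by $C\delta^{2}\int wf^{2}$, and this is $\le 2\delta\lambda\int wf^{2}$ since $\delta\le\lambda$ on the nontrivial part of the indicator set.

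For the cross term the decisive pointwise inequality is
\[
w_1(\alpha_1)\, g_{23}(\alpha) \le I_1(w_1 g_{23})(\alpha) = \bfI(wf)(\alpha),
\]
which holds because $I_1$ sums over $\alpha_1'\ge\alpha_1$ and already includes $\alpha_1'=\alpha_1$. On the indicator set this yields a factor of $\lambda$ when applied to one copy of $g_{23}$; a Lemma~\ref{lem:en2:1}-style manipulation (adjoints plus the support hypothesis, producing $wf g_1 = w_1 f\cdot I_1(wf)\le \delta w_1 f$) then extracts the $\delta$ and should collapse the remaining integral to $\int wf^{2}$. The main obstacle is the iterated Corollary~\ref{cor:1} in the $(2,3)$-block, since the naive iteration requires $\Delta_2\Delta_3 f\ge 0$, a supermodularity condition stronger than the coordinate-wise superadditivity used in the $2$-tree lemmas; ensuring this is what makes the adaptation to $3$-trees delicate. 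Compatibility of the indicator with the adjoint manipulations is automatic because $\{\bfI(wf)\le\lambda\}$ is an up-set, $\bfI(wf)$ being nonincreasing along each coordinate going up.
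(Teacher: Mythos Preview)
Your proposal has a genuine gap, and you correctly identify it yourself: the ``symmetric statement'' $I_{23}^{*}(f\cdot I_1(wf))\le \delta f$ does \emph{not} follow from the available hypotheses. Iterating Corollary~\ref{cor:1} through coordinates $2$ and $3$ would require $\Delta_3\Delta_2 f\ge 0$, i.e.\ supermodularity, which is strictly stronger than the coordinate-wise superadditivity assumed. Since your diagonal term relies on this estimate, the argument does not close. The cross-term sketch is also too vague to assess; the pointwise bound $w_1 g_{23}\le \bfI(wf)$ is fine, but you have not shown how to collapse the remaining four-factor integral to $\int wf^2$ with only a factor of $\delta$.

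The paper's proof avoids these issues by a simpler route: it splits \emph{only} $g_1^2\le 2I_1(w_1 f\cdot g_1)$ via Lemma~\ref{lem:split}, and leaves $g_{23}^2$ untouched. After distributing the tensor weight as $w\cdot g_{23}^2 = I_{23}(w_2w_3 f)\cdot I_{23}(wf)\cdot w_1$ and taking $I_1^{*}$, one has
\[
2\int w_1 f\cdot I_1(wf)\cdot I_1^{*}\bigl(I_{23}(w_2w_3 f)\cdot I_{23}(wf)\cdot \one_{\bfI(wf)\le\lambda}\bigr).
\]
Now Corollary~\ref{cor:1} is applied in coordinate~$1$ only, with the superadditive factor $\one_{\bfI(wf)\le\lambda}\cdot I_{23}(w_2w_3 f)$ (superadditive in direction~$1$ because the indicator is of an up-set and $I_{23}(w_2w_3 f)$ inherits superadditivity from $f$); since $I_1(I_{23}(wf))=\bfI(wf)\le\lambda$ on the support of $\Delta_1$ of that factor, the $I_1^{*}$ term is bounded by $\lambda\, I_{23}(w_2w_3 f)$. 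This extracts $\lambda$. A second application of Corollary~\ref{cor:1}, again in coordinate~$1$, to $I_1^{*}(f\cdot I_{23}(wf))$ (exactly \eqref{eq:4}) extracts $\delta$. No $I_{23}^{*}$ step, no supermodularity, and no cross term ever appears. Your decomposition of $g_{23}^2$ via Lemma~\ref{lem:split2} is the step that creates all the difficulties; drop it.
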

\begin{proof}
By Lemma~\ref{lem:split}, we have
\begin{equation}
\label{eq:5}
\begin{split}
\MoveEqLeft
\int w (I_{1}(w_{1}f) \cdot I_{2}I_{3}(w_{2}w_{3}f))^{2} \one_{\bfI (wf) \leq \lambda}
\\ &\leq
2 \int w I_{1}(w_{1}f \cdot I_{1}(w_{1}f)) \cdot (I_{2}I_{3}(w_{2}w_{3}f))^{2} \one_{\bfI (wf) \leq \lambda}
\\ &=
2 \int I_{1}(w_{1}f \cdot I_{1}(wf)) \cdot (I_{2}I_{3}(w_{2}w_{3}f)) \cdot (I_{2}I_{3}(wf)) \one_{\bfI (wf) \leq \lambda}
\\ &=
2 \int w_{1}f \cdot I_{1}(wf) \cdot I_{1}^{*}\bigl( (I_{2}I_{3}(w_{2}w_{3}f)) \cdot (I_{2}I_{3}(wf)) \one_{\bfI (wf) \leq \lambda} \bigr).
\end{split}
\end{equation}
By Corollary~\ref{cor:1}, we have
\[
I_{1}^{*}\bigl( (I_{2}I_{3}(w_{2}w_{3}f)) \cdot (I_{2}I_{3}(wf)) \one_{\bfI (wf) \leq \lambda} \bigr)
\leq
I_{1}^{*}\bigl( \Delta_{1} (\one_{\bfI (wf) \leq \lambda} \cdot I_{2}I_{3}(w_{2}w_{3}f)) \cdot I_{1}(I_{2}I_{3}(wf)).
\]
Since $\Set{\bfI (wf) \leq \lambda}$ is an up-set and $f$ is superadditive in the first coordinate, we have $\Delta_{1} (\one_{\bfI (wf) \leq \lambda} \cdot I_{2}I_{3}(w_{2}w_{3}f)) \geq 0$, and $I_{1}(I_{2}I_{3}wf) = \bfI wf \leq \lambda$ on the support of the former function.
Hence,
\begin{align*}
I_{1}^{*}\bigl( (I_{2}I_{3}(w_{2}w_{3}f)) \cdot (I_{2}I_{3}(wf)) \one_{\bfI (wf) \leq \lambda} \bigr)
&\leq
I_{1}^{*}\bigl( \Delta_{1} (\one_{\bfI (wf) \leq \lambda} \cdot I_{2}I_{3}(w_{2}w_{3}f)) \cdot \lambda \bigr)
\\ &=
\lambda \one_{\bfI (wf) \leq \lambda} \cdot I_{2}I_{3}(w_{2}w_{3}f).
\end{align*}
Using this bound, we obtain
\begin{align*}
\eqref{eq:5}
&\leq
2 \lambda \int w_{1}f \cdot I_{1}(wf) \cdot I_{2}I_{3}(w_{2}w_{3}f)
\\ &=
2 \lambda \int f \cdot I_{1}(wf) \cdot I_{2}I_{3}(wf)
\\ &=
2 \lambda \int wf \cdot I_{1}^{*}(f \cdot I_{2}I_{3}(wf)).
\end{align*}
As in \eqref{eq:4}, we see that
\[
I_{1}^{*}(f \cdot I_{2}I_{3}(wf))
\leq
\delta f.
\]
This implies the conclusion of the lemma.
\end{proof}
Compare the next result with Lemma \ref{SmEMaj2}.
\begin{lemma}[Small energy majorization on tri-tree]
\label{SmEMaj3}
Let $T^{3}$ be a $3$-tree and $f:T^3\to  [0, \infty)$ a function that is superadditive in each parameter separately.
\hypothesistensor{3}
Suppose that $\supp f \subseteq \Set{ \bfI (wf) \leq \delta }$. Let $\la \ge 4 \delta$.
Then there exists $\varphi:T^3\to  [0, \infty)$ such that
\[
a)\, \,{\bf I} (w\varphi )\ge \bfI (wf), \,\, \text{where}\,\, \bfI (wf) \in [\la, 2\la],
\]
\[
b) \int_{T^3} w \varphi^2 \le C\frac{\delta}{\la} \int_{T^3} w f^2,
\]
where $C$ is an absolute constant.
\end{lemma}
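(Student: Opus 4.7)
The plan is to mimic the bi-tree argument for Lemma~\ref{SmEMaj2}, substituting the trivariate Corollary~\ref{cor:upper-bd:3} for its bivariate analogue and the trivariate energy bound Lemma~\ref{lem:energy-bd:3} for Lemma~\ref{lem:en2:2}. Set $g := wf$; the hypothesis $\supp f \subseteq \{\bfI(wf) \leq \delta\}$ becomes $\supp g \subseteq \{\bfI g \leq \delta\}$. Since $\delta \leq \lambda/4$, Corollary~\ref{cor:upper-bd:3} applied to $g$ yields
\[
(\bfI(wf))\, \one_{\lambda \leq \bfI(wf) \leq 2\lambda}
\leq 4\lambda^{-1} \bfI\Bigl( \sum_{i=1}^{3} I_{i}(wf) \cdot I_{(i)}(wf) \cdot \one_{\bfI(wf) \leq 2\lambda} \Bigr).
\]

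Next, exploit the tensor product structure $w = w_{1}w_{2}w_{3}$. Writing $w_{(i)} := \prod_{j \neq i} w_{j}$, the fact that $I_{i}$ acts only in the $i$-th coordinate gives $I_{i}(wf) = w_{(i)} \cdot I_{i}(w_{i}f)$ and $I_{(i)}(wf) = w_{i} \cdot I_{(i)}(w_{(i)}f)$, hence $I_{i}(wf) \cdot I_{(i)}(wf) = w \cdot I_{i}(w_{i}f) \cdot I_{(i)}(w_{(i)}f)$. Define
\[
\varphi := 4\lambda^{-1} \sum_{i=1}^{3} I_{i}(w_{i}f) \cdot I_{(i)}(w_{(i)}f) \cdot \one_{\bfI(wf) \leq 2\lambda}.
\]
Then $\bfI(w\varphi)$ equals the right-hand side of the previous display, so property~(a) follows immediately.

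For property~(b), the elementary inequality $(a+b+c)^{2} \leq 3(a^{2}+b^{2}+c^{2})$ gives
\[
\int w \varphi^{2} \leq 48 \lambda^{-2} \sum_{i=1}^{3} \int w \bigl( I_{i}(w_{i}f) \cdot I_{(i)}(w_{(i)}f) \bigr)^{2} \one_{\bfI(wf) \leq 2\lambda}.
\]
By Lemma~\ref{lem:energy-bd:3}, applied with $\lambda$ replaced by $2\lambda$ and symmetrized in the coordinate $i$ (using that $w$ is a tensor product and $f$ is superadditive in each parameter), each summand is at most $4\delta\lambda \int w f^{2}$, so $\int w \varphi^{2} \lesssim (\delta/\lambda) \int w f^{2}$.

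The main obstacle---and the reason the conclusion degrades from $(\delta/\lambda)^{2}$ on the bi-tree to merely $\delta/\lambda$ here---is already built into Lemma~\ref{lem:energy-bd:3}. In the bi-tree proof of Lemma~\ref{SmEMaj2}, both factors $I_{1}(w_{1}f)$ and $I_{2}(w_{2}f)$ can be dualized against $f$ via Corollary~\ref{cor:1} and controlled by $\delta$ using superadditivity in each parameter separately, producing two independent gains of $\delta$. On a $3$-tree one of the two factors is the two-parameter Hardy operator $I_{(i)}(w_{(i)}f)$, and superadditivity in a single coordinate no longer suffices to reduce it to a $\delta$-bound; one is forced to absorb this factor using the indicator $\one_{\bfI(wf) \leq 2\lambda}$, which only contributes a factor of $\lambda$ in place of $\delta$. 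This asymmetry between the two factors is precisely what the authors flag as the structural obstruction to extending the method to $n \geq 4$.
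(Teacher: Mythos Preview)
Your proof is correct and follows the same strategy as the paper: invoke Corollary~\ref{cor:upper-bd:3} to produce the majorant $\varphi$, then control $\int w\varphi^{2}$ via Lemma~\ref{lem:energy-bd:3} symmetrized over the three coordinates. Your treatment is in fact cleaner than the paper's own proof, which contains notational slips (an extra $\bfI$ in the displayed definition of $\varphi$, and suppression of $w$); your explicit use of the tensor identity $I_{i}(wf)\cdot I_{(i)}(wf)=w\cdot I_{i}(w_{i}f)\cdot I_{(i)}(w_{(i)}f)$ is precisely what makes the argument go through for general product weights.
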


\begin{proof}
Since $2\delta\lambda^{-1} \leq 1/2$, we have
\begin{align*}
(\bfI wf) \one_{\lambda \leq \bfI f \leq 2 \lambda}
&\leq
2\lambda^{-1} \bfI \Bigl( 2\sum_{i \in \Set{1,2,3}} I_{i}w_{i} f \cdot I_{(i)}w_{(i)}f \Bigr) \one_{\lambda \leq \bfI f \leq 2 \lambda}
\\ &\leq
2\lambda^{-1} \bfI \Bigl( 2\sum_{i \in \Set{1,2,3}} I_{i}w_{i} f \cdot I_{(i)}w_{(i)}f \cdot \one_{\bfI f \leq 2 \lambda} \Bigr).
\end{align*}
Put
\[
\vf:= 2\lambda^{-1} \Bigl( 2\sum_{i \in \Set{1,2,3}} I_{i} w_{i}f \cdot I_{(i)}w_{(i)}f \cdot \one_{\bfI f \leq 2 \lambda} \Bigr).
\]
Then we have just seen that a) is satisfied. To prove b) just apply Lemma \ref{lem:energy-bd:3}.
 
\end{proof}

\subsubsection{The lack of maximal principle and the capacity of bad sets}
\label{degree3}
The reader can compare this subsection with  Subsection \ref{degree}.

Let $\mu$ be a measure on $\pd T^3$ and let it potential $\bfV^\mu \le 1$ on $\supp\mu$. As we already mentioned in the  ``usual'' potential theory the maximal principle would  imply
that potential $\bV^\mu\le 1$ everywhere (or at least that $\bfV^\mu\le C$ with absolute constant $C$, see \cite{AH}).  

As we also already mentioned, see Subsection \ref{degree}, this is not true for potential theory on multi-trees. 

The natural question arises: given $\la>> 1$, what is the size of the set $\{ \bfV^\mu \ge \la\}$.  Let us introduce the usual notion of capacity on $T^3$. 
Given a set $E$ we consider all $\varphi$ such that $\bfI \vf \ge 1$ on $E$ and 
\[
\text{cap} (E):= \inf\int_{T^3} \vf^2
\]
where infimum is taken over such $\vf$. So one would like to estimate the capacity of the bad set $\text{cap}(\{ \bfV^\mu >\la\})$ in terms of  $\la$, if $\bfV^\mu \le 1$ on $\supp\mu$.

\begin{theorem}
\label{cap3}
Let  us be on $T^3$ and $\bfV^\mu \le 1$ on $\supp \mu$. Then
\[
\text{cap}(\{ \bV^\mu >\la\}) \le \frac{C\mathcal{E}[\mu]}{\la^3}
\] 
for $\la \ge 1$, where $C$ is an absolute constant.
\end{theorem}

\begin{proof}
Consider $f=\bfI^* \mu$, $\delta=1$.  If $f(\al) \neq 0$ then there is $\beta\le \al$ such that $\beta\in \supp \mu$. But then by assumption $\bfI f(\beta)= \bfI\bfI^* (\beta) = \bfV^\mu(\beta) \le 1$. By monotonicity of $\bfI$ we have that $\bfI f(\al) \le 1$. Hence
\[
\supp f \subset \{\bfI f \le \delta=1\},
\]
and we are in the assumptions of  small energy majorization Lemma  on tri-tree \ref{SmEMaj3}.  We apply it with data $(f, \delta=1, \la:= 2^m\la)$ to 
finish the proof in exactly the same manner as this has been done in Theorem \ref{cap2}.

%get functions
%$\vf_m$, $m=0, 1, \dots$ such that
%\[
%\bI \vf_m  \ge \bI f=\bV^{\mu}, \quad \text{where}\,\,  \bV^{\mu}\in [2^m \la, 2^{m+1}\la],
%\]
%which means that 
%\[
%2^{-m} \la^{-1}\bI \vf_m  \ge 1, \quad \text{where}\,\,  \bV^{\mu}\in [2^m \la, 2^{m+1}\la],
%\]
%On the other hand, putting $\vf :=\sum_m 2^{-m} \la^{-1} \vf_m$, we get
%firstly
%\[
%\bI \vf \ge 1, \quad \text{where}\,\,  \bV^{\mu}\in [ \la, \infty),
%%\]
%and secondly
%\begin{equation*}
%\begin{split}
%&\int\vf^2 \le \Big(\la^{-1}\sum_m 2^{-m} \big(\int_{T^2} \vf_m^2\big)^{1/2}\Big)^2 \le  
%\\
%&C\,\Big(\la^{-1}\sum_m \la^{-1}2^{-2m}( \int_{T^2}  f^2)^{1/2}\Big)^2 \le C'\, \la^{-4} (\int_{T^2}  f^2)^2
%\end{split}
%\end{equation*}
%As $f=\bI^* \mu$, $\int_{T^2} f^2 =  \int_{T^2} \bI^*\mu \bI^* \mu = \int_{T^2} \bI\bI^* \mu d\mu = \int_{T^2} \bV^\mu d\mu =\cE[\mu]$, which proves theorem.
\end{proof}

\begin{remark}
We do not know how precise is the rate $\la^{-2}$ in Theorem \ref{cap3}.  It is obviously worse than the one on bi-tree, but we do not know how sharp it is.
\end{remark}

\subsubsection{Continuation of energy estimates}
\label{continEE3}
The next result is a version of \eqref{eq:s-max} for $3$-trees.
The proof closely follows \cite[Lemma 4.1]{AMPVZ}.

\begin{lemma}
\label{lem:bVdelta:3}
Let $\mu,\rho$ be positive measures on $T^{3}$ and $\delta>0$.
\hypothesistensor{3}
Then
\begin{equation}
\label{eq:bVdelta:3}
\Bigl( \int \bfV_{\delta}^{\mu} \dif\rho \Bigr)^{3}
\lesssim
\delta \calE_\delta[\mu] \calE[\rho] \abs{\rho}.
\end{equation}
\end{lemma}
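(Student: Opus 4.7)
The plan is to adapt the level-set optimization argument used in the proof of Theorem~\ref{cap3} (the original \cite[Lemma~4.1]{AMPVZ} strategy), rather than the streamlined iterated Cauchy--Schwarz used for Lemma~\ref{lem:bVdelta:2}. The latter approach does not immediately produce the desired exponent here because the tri-tree energy estimate of Lemma~\ref{lem:energy-bd:3} only gives $\delta\lambda$ in place of the $\delta^{2}$ available on the bi-tree. Consequently a quantity of the form $\int wG^{2}$ obtained from squaring $\bfI(wf)$ via Lemma~\ref{lem:split3} cannot be bounded independently of the level, and one must instead slice $\bfV^{\mu}_{\delta}$ itself and balance.

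Set $f := \one_{\bfV^{\mu}\leq \delta}\bfI^{*}\mu$, so that $\bfV^{\mu}_{\delta}=\bfI(wf)$ and $\int wf^{2}=\calE_{\delta}[\mu]$. Since $\bfI^{*}\mu$ is superadditive in each coordinate separately and the indicator of the up-set $\{\bfV^{\mu}\leq\delta\}$ preserves superadditivity, $f$ meets the hypotheses of Lemma~\ref{SmEMaj3}. Fix a threshold $\lambda_{0}\geq 4\delta$ to be optimized later. Cover the range of $\bfV^{\mu}_{\delta}$ above $\lambda_{0}$ by dyadic shells $\{\lambda_{k}\leq \bfV^{\mu}_{\delta}\leq 2\lambda_{k}\}$ with $\lambda_{k}=2^{k}\lambda_{0}$ and split
\[
\int \bfV^{\mu}_{\delta}\dif\rho \;\leq\; \lambda_{0}\abs{\rho} \;+\; \sum_{k\geq 0}\int \bfV^{\mu}_{\delta}\one_{\lambda_{k}\leq \bfV^{\mu}_{\delta}\leq 2\lambda_{k}}\dif\rho.
\]

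For each $k$, apply Lemma~\ref{SmEMaj3} at level $\lambda_{k}$ to obtain $\varphi_{k}\geq 0$ with $\bfI(w\varphi_{k})\geq \bfV^{\mu}_{\delta}$ on the $k$-th shell and $\int w\varphi_{k}^{2}\leq C\delta\lambda_{k}^{-1}\calE_{\delta}[\mu]$. Replacing $\bfV^{\mu}_{\delta}$ by $\bfI(w\varphi_{k})$ on the shell, passing to the adjoint, and applying Cauchy--Schwarz weighted by $w$ bounds the $k$-th piece by $(C\delta\calE_{\delta}[\mu]\calE[\rho]\lambda_{k}^{-1})^{1/2}$. Summing the geometric series $\sum_{k\geq 0}\lambda_{k}^{-1/2}\lesssim \lambda_{0}^{-1/2}$ produces
\[
\int \bfV^{\mu}_{\delta}\dif\rho \;\lesssim\; \lambda_{0}\abs{\rho} \;+\; \lambda_{0}^{-1/2}\bigl(\delta\calE_{\delta}[\mu]\calE[\rho]\bigr)^{1/2}.
\]
Optimizing in $\lambda_{0}$ (the balance point is $\lambda_{0}^{3/2}\sim (\delta\calE_{\delta}[\mu]\calE[\rho])^{1/2}/\abs{\rho}$) yields $\int \bfV^{\mu}_{\delta}\dif\rho\lesssim (\delta\calE_{\delta}[\mu]\calE[\rho]\abs{\rho})^{1/3}$, and cubing gives \eqref{eq:bVdelta:3}. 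In the edge case where the optimal $\lambda_{0}$ falls below $4\delta$, one has $(\calE_{\delta}[\mu]\calE[\rho])^{1/2}\lesssim \delta\abs{\rho}$, and the direct estimate $\int\bfV^{\mu}_{\delta}\dif\rho = \int wf\bfI^{*}\rho\leq \calE_{\delta}[\mu]^{1/2}\calE[\rho]^{1/2}$ already implies the claim. The genuinely nontrivial input is Lemma~\ref{SmEMaj3} --- specifically the $\delta/\lambda$ (rather than $\delta^{2}/\lambda^{2}$) behavior, which is what forces exponent $3$ in place of the bi-tree's exponent $4$; everything else is a geometric summation and a one-parameter optimization.
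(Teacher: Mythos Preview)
Your proof is correct and follows essentially the same approach as the paper: both set $f=\one_{\bfV^{\mu}\le\delta}\bfI^{*}\mu$, dyadically slice the range of $\bfV_{\delta}^{\mu}$ above a threshold, apply the small-energy majorization at each level (you cite Lemma~\ref{SmEMaj3}, the paper unpacks it as Corollary~\ref{cor:upper-bd:3} plus Lemma~\ref{lem:energy-bd:3}), sum the resulting geometric series, and optimize the threshold. Your explicit handling of the edge case $\lambda_{0}<4\delta$ is in fact slightly more careful than the paper, which substitutes the optimal $\lambda$ without verifying the constraint coming from Corollary~\ref{cor:upper-bd:3}.
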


\begin{proof}
Without loss of generality, $\calE_{\delta}[\mu]\neq 0$ and $\rho\not\equiv 0$.
Let $\lambda>0$ be chosen later.

Let $f := \bfI^{*}\mu \cdot \one_{\bfV^{\mu} \leq \delta}(\alpha)$.
This function is superadditive.
Also, $\bfI (wf) = \bfV_{\delta}^{\mu} \leq \bfV^{\mu} \leq \delta$ on $\supp f$, and $\calE_{\delta}[\mu] = \int w f^{2}$.

For $m=0,1,\dotsc$ let
\[
\phi_{m} := 4(2^{m}\lambda)^{-1} \Bigl( \sum_{i \in \Set{1,2,3}} I_{i} (w_{i}f) \cdot I_{(i)}(w_{(i)}f) \cdot \one_{\bfI (wf) \leq 2^{m+1} \lambda} \Bigr).
\]
Then, by Corollary~\ref{cor:upper-bd:3} with $wf$ in place of $f$, we have
\[
\bfI (wf) \cdot \one_{2^{m}\lambda < \bfI (wf) \leq 2^{m+1}\lambda} \leq \bfI(w\phi_{m}),
\]
and, by Lemma~\ref{lem:energy-bd:3}, we have
\[
\int w \phi_{m}^{2} \lesssim \frac{\delta}{2^{m}\lambda} \int w f^{2}.
\]
Hence,
\begin{align*}
\int \bfV^\mu_\delta \dif\rho
&=
\int_{\{\bfV^\mu_{\delta} \leq \lambda\}} \bfV^\mu_{\delta} \dif\rho
+ \sum_{m=0}^{\infty} \int_{\{2^{m}\lambda < \bfV_{\delta}^{\mu} \leq 2^{m+1}\lambda\}} \bfV^\mu_\delta \dif\rho
\\ &\le
\lambda \abs{\rho}
+ \sum_{m=0}^{\infty} \int \bfI (w\phi_{m}) \dif\rho
\\ &=
\lambda \abs{\rho}
+ \sum_{m=0}^{\infty} \int w \phi_{m} \bfI^{*}\dif\rho
\\ &\leq
\lambda \abs{\rho}
+ \sum_{m=0}^{\infty} \bigl( \int w \phi_{m}^{2} \bigr)^{1/2} \calE[\rho]^{1/2}
\\ &\leq
\lambda \abs{\rho}
+ \sum_{m=0}^{\infty} C (\delta/(2^{m}\lambda))^{1/2} \calE_{\delta}[\mu]^{1/2} \calE[\rho]^{1/2}.
\\ &\leq
\lambda \abs{\rho}
+ C (\delta/\lambda)^{1/2} \calE_{\delta}[\mu]^{1/2} \calE[\rho]^{1/2}.
\end{align*}
Substituting $\lambda = (\delta \calE_{\delta}[\mu] \calE[\rho])^{1/3} \abs{\rho}^{-2/3}$, we obtain \eqref{eq:bVdelta:3}.
\end{proof}

\begin{corollary}
\label{cor:bVdelta:3}
Let $\mu,\rho$ be positive measures on $T^{3}$ and $\delta>0$.
Then
\begin{equation}
\label{eq:bVdelta:3'}
\int \bfV_{\delta}^{\mu} \dif\rho
\leq
C_{\eqref{eq:bVdelta:3}}^{1/2} \delta^{1/2} \calE[\mu]^{1/6} \abs{\mu}^{1/6} \calE[\rho]^{1/3} \abs{\rho}^{1/3}.
\end{equation}
\end{corollary}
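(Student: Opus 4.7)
The key observation is that Corollary~\ref{cor:bVdelta:3} should follow by combining Lemma~\ref{lem:bVdelta:3} with a self-improvement obtained by specializing $\rho=\mu$. My plan is to extract a bound on the truncated energy $\calE_{\delta}[\mu]$ in terms of the full energy $\calE[\mu]$ and the total mass $\abs{\mu}$, and then plug that bound back into the cube root of \eqref{eq:bVdelta:3}.

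First I would check the identity
\[
\int \bfV_{\delta}^{\mu} \dif\mu = \calE_{\delta}[\mu],
\]
which is immediate from unwinding the definition: $\bfV_{\delta}^{\mu} = \bfI(\one_{\bfV^\mu \leq \delta} w \bfI^{*}\mu)$, so by the adjoint relation
\[
\int \bfV_{\delta}^{\mu} \dif\mu = \int \one_{\bfV^\mu \leq \delta} w (\bfI^{*}\mu)^{2} = \calE_{\delta}[\mu].
\]
Then applying Lemma~\ref{lem:bVdelta:3} with $\rho = \mu$ yields
\[
\calE_{\delta}[\mu]^{3} \leq C_{\eqref{eq:bVdelta:3}} \cdot \delta\, \calE_{\delta}[\mu]\, \calE[\mu]\, \abs{\mu},
\]
and dividing by $\calE_{\delta}[\mu]$ (harmless by the WLOG assumption in the proof of Lemma~\ref{lem:bVdelta:3}) gives
\[
\calE_{\delta}[\mu] \leq C_{\eqref{eq:bVdelta:3}}^{1/2} \delta^{1/2} \calE[\mu]^{1/2} \abs{\mu}^{1/2}.
\]

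Finally, for a general positive measure $\rho$, Lemma~\ref{lem:bVdelta:3} directly gives
\[
\int \bfV_{\delta}^{\mu} \dif\rho \leq C_{\eqref{eq:bVdelta:3}}^{1/3} \delta^{1/3}\, \calE_{\delta}[\mu]^{1/3}\, \calE[\rho]^{1/3}\, \abs{\rho}^{1/3},
\]
and substituting the bound on $\calE_{\delta}[\mu]^{1/3}$ obtained above produces the exponent $\delta^{1/3+1/6}=\delta^{1/2}$ together with the factors $\calE[\mu]^{1/6}\abs{\mu}^{1/6}$, matching \eqref{eq:bVdelta:3'}. The constant works out to $C_{\eqref{eq:bVdelta:3}}^{1/3}\cdot C_{\eqref{eq:bVdelta:3}}^{1/6} = C_{\eqref{eq:bVdelta:3}}^{1/2}$.

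There is no real obstacle here; the whole argument is a one-line trick (test Lemma~\ref{lem:bVdelta:3} against $\mu$ itself and feed the resulting energy estimate back in). The only point worth a moment of care is to verify the identity $\int \bfV_{\delta}^{\mu} \dif\mu = \calE_{\delta}[\mu]$ and to check that the arithmetic of the exponents in $\delta$ works out, which is $\tfrac13 + \tfrac16 = \tfrac12$, as required.
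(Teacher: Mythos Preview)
Your proof is correct and is essentially the paper's own argument. The paper applies Lemma~\ref{lem:bVdelta:3} and then bounds $\calE_{\delta}[\mu]$ by invoking Lemma~\ref{partialV}; but Lemma~\ref{partialV} is itself obtained by setting $\rho=\mu$ in the surrogate maximum principle, which for $n=3$ is precisely Lemma~\ref{lem:bVdelta:3}, so your explicit self-testing step (specialize $\rho=\mu$, cancel one power of $\calE_{\delta}[\mu]$, substitute back) unwinds exactly the same computation with the same constants.
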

\begin{proof}
By Lemma~\ref{lem:bVdelta:3} and Theorem~\ref{partialV}, we have
\[
\Bigl( \int \bfV_{\delta}^{\mu} \dif\rho \Bigr)^{3}
\leq
C_{\eqref{eq:bVdelta:3}} \delta \calE_\delta[\mu] \calE[\rho] \abs{\rho}
\leq
C_{\eqref{eq:bVdelta:3}} \delta \Bigl( C_{\eqref{eq:bVdelta:3}} \delta \calE[\mu] \abs{\mu} \Bigr)^{1/2} \calE[\rho] \abs{\rho}
\]
\end{proof}

\subsection{$n$-trees}
%\begin{condition}
\label{cond:max-surrogate}
We say that a weight $w$ satisfies the \emph{surrogate maximum principle} if, for some $\kappa>0,C<\infty$ and every positive functions $\mu, \rho: T^n \to [0,\infty)$ and $\delta>0$, we have
\begin{equation}
\label{eq:max-surrogate}
\int \bfV_{\delta}^{\mu} \dif\rho
\leq C
\bigl(\delta \abs{\rho} \bigr)^{\kappa} \bigl( \calE_\delta[\mu] \calE[\rho] \bigr)^{(1-\kappa)/2}.
\end{equation}
%\end{condition}

When $n \in \Set{1,2,3}$, every weight $w$ of tensor product form satisfies the surrogate maximum principle with $\kappa=1/n$ and $C$ independent of $w$.
For $n=1$, this follows from the maximum principle \eqref{eq:maximum-principle}.
For $n=2$ this holds by Lemma~\ref{lem:bVdelta:2}, and for $n=3$ by Lemma~\ref{lem:bVdelta:3}.
This leads us to the following conjecture.

\begin{conjecture}[Surrogate maximum principle]
\label{conj:max-surrogate}
%\hypothesistensor{n}
Let $w : T^n \to [0, \infty)$ be of tensor product form.
Then $w$ satisfies the surrogate maximum principle with $\kappa=1/n$ and $C=C(n)$ independent of $w$.
\end{conjecture}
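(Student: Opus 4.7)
The plan is to follow the strategy of Lemma~\ref{lem:bVdelta:3} verbatim and extend each of its ingredients from $n=3$ to general $n$. Three of the four steps generalize routinely; the energy bound is the obstacle. First, iterating Lemma~\ref{lem:split} once per coordinate yields the $n$-tree split lemma
\[
(\bfI f)(\bfI g) \le \bfI\Bigl(\sum_{A\subseteq\{1,\dots,n\}} I_A f \cdot I_{A^c} g\Bigr),
\qquad I_A := \prod_{i\in A} I_i.
\]
Setting $f=g$ under $\supp f \subseteq\{\bfI f\le\delta\}$ and $\delta\le\lambda/4$, the proof of Corollary~\ref{cor:upper-bd:3} gives
\[
(\bfI f)\,\one_{\lambda\le\bfI f\le 2\lambda} \le C\lambda^{-1} \bfI\Bigl(\sum_{\emptyset\ne A\subsetneq\{1,\dots,n\}} I_A f \cdot I_{A^c} f \cdot \one_{\bfI f \le 2\lambda}\Bigr).
\]
Inserting this into a dyadic level-set decomposition of $\int \bfV_\delta^\mu\,d\rho$, applying Cauchy--Schwarz against $\bfI^*\rho$ to produce $\calE[\rho]^{1/2}$, summing the geometric series in the dyadic index, and optimizing in $\lambda$ reduces the conjecture to proving, for each proper $A\subset\{1,\dots,n\}$ and each $f$ superadditive in every coordinate with $\supp f \subseteq\{\bfI(wf)\le\delta\}$,
\begin{equation}
\label{plan:energy}
\int w\bigl(I_A(w_A f)\cdot I_{A^c}(w_{A^c} f)\bigr)^2\,\one_{\bfI(wf)\le\lambda}
\lesssim \delta^{2/(n-1)}\lambda^{2(n-2)/(n-1)} \int wf^2.
\end{equation}
The algebraic relation $\kappa=\alpha/(2+\alpha)$ between the energy exponent $\alpha$ and the surrogate exponent $\kappa$ is the same as in the $n=3$ case; matching $\kappa=1/n$ forces $\alpha=2/(n-1)$, hence the exponents in \eqref{plan:energy}.

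The obstacle is \eqref{plan:energy}. For singletons $A=\{i\}$ the proof of Lemma~\ref{lem:energy-bd:3} generalizes verbatim: one application of Lemma~\ref{lem:split} inside $(I_i(w_i f))^2$, integration by parts, and two invocations of Corollary~\ref{cor:1} harvest one factor $\lambda$ from the truncation $\one_{\bfI(wf)\le\lambda}$ and one factor $\delta$ from the support of $f$, giving the bound $\delta\lambda\int wf^2$, which is stronger than \eqref{plan:energy} for all $n\ge 3$ in the relevant regime $\delta\le\lambda$. The genuine difficulty appears at $n=4$, where the sum in the pointwise bound also contains terms with $|A|=|A^c|=2$. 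Expanding $(I_A(w_A f))^2$ now requires Lemma~\ref{lem:split2} acting inside the $A$-coordinates, which produces cross-terms $I_B(w_B f)\cdot I_{A\setminus B}(w_{A\setminus B} f)$ for $\emptyset\ne B\subsetneq A$; these resist iterated Corollary~\ref{cor:1} because the integration by parts needed is no longer a single one-parameter manoeuvre but couples several coordinates whose outputs cannot be neatly absorbed.

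My attempt at \eqref{plan:energy} for $|A|\ge 2$ would be an induction on $n$. Granting the conjecture on $(n-1)$-trees, I would view the factor $I_A(w_A f)$ as a potential on the sub-tree spanned by $A^c$, parametrized by the frozen $A$-coordinates, dualize $I_A^*$ once, and apply the inductive surrogate maximum principle at exponent $\kappa_{n-1}=1/(n-1)$ to the resulting pairing against $I_{A^c}(w_{A^c} f)\cdot\one_{\bfI(wf)\le\lambda}$. The truncation should contribute $\lambda$, the support of $f$ should contribute $\delta$, and the inductive $\kappa_{n-1}$ should contribute the missing fractional factor $(\delta/\lambda)^{1/(n-1)}$, upgrading the naive $\delta\lambda$ to $\delta^{2/(n-1)}\lambda^{2(n-2)/(n-1)}$ after rearrangement. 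The delicate point, where I expect to spend essentially all of the effort, is that the inductive hypothesis controls an integral against a test measure whereas \eqref{plan:energy} demands a pointwise product bound on $T^n$; bridging the two requires a carefully staged Cauchy--Schwarz that exploits the tensor factorization $w=w_A\otimes w_{A^c}$ to decouple the $A$- and $A^c$-sides. Should this decoupling turn out to be uniform in $|A|$, the conjecture would follow; otherwise the whole approach collapses to $\kappa=1/3$ for every $n\ge 3$, which is why the authors could only settle the cases $n\le 3$.
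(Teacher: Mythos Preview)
This statement is a \emph{conjecture} in the paper, not a theorem; there is no proof to compare against. Section~\ref{cannot} is devoted precisely to explaining why the argument that works for $n\le 3$ fails at $n=4$. Your reconstruction of the strategy is accurate, and you correctly locate the obstacle: the cross-terms $I_A f\cdot I_{A^c} f$ with $\lvert A\rvert,\lvert A^c\rvert\ge 2$ first appear at $n=4$, and the paper singles out the integral $\int_{T^4}(\bI_{12}f\cdot\bI_{34}f)^2$ as the quantity it cannot control.

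Your proposed inductive fix, however, is not a proof but a hope, and you say as much. The concrete difficulty is that the surrogate maximum principle on $(n-1)$-trees controls pairings $\int\bfV_\delta^\mu\,d\rho$, where $\bfV_\delta^\mu=\bfI(\one_{\bfV^\mu\le\delta}\,w\,\bfI^*\mu)$ is a truncated \emph{full} potential; the object $I_A(w_Af)$ is a partial Hardy operator, and the truncation set $\{\bfI(wf)\le\lambda\}$ on $T^n$ does not slice along the $A$-fibers to a level set of any potential on a sub-tree. The paper reformulates what is missing as Conjecture~\ref{ETc} (a two-function small-energy majorization on $T^2$, with $f\ne g$) and explains in Remarks~\ref{FirstDifficulty} and~\ref{SecondDifficulty} why both natural approaches to it fail: the superadditivity constant doubles at each inductive step, and the one-dimensional truncation bound $I(\one_{Ig\le\delta}\,g)\le\delta$ is simply false on $T^2$. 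Your decoupling-by-Cauchy--Schwarz would have to circumvent both obstructions simultaneously; absent that, the induction does not close and the conjecture remains open.
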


In what follows, we will work conditionally on the surrogate maximum principle.
All implicit constants are allowed to depend on $\kappa,C$ in \eqref{eq:max-surrogate}, but not otherwise on $w$.
In particular, our results hold unconditionally for $w$ of tensor product form if $n\in\Set{1,2,3}$.

Taking $\rho=\mu$ in \eqref{eq:max-surrogate}, we obtain Lemma~\ref{partialV} below.
\begin{lemma}
\label{partialV}
Let $w:T^n\to [0, \infty)$ be such that the surrogate maximal principle \eqref{eq:max-surrogate} holds.
Let $\mu$ be a positive measure on $T^{n}$ and $\delta>0$.
Then
\begin{equation}
\label{eq:cEcE}
\int \bfV_{\delta}^{\mu} \dif\mu
\leq C_{\eqref{eq:max-surrogate}}^{\frac{2}{1+\kappa}}
(\delta \abs{\mu})^{\frac{2\kappa}{1+\kappa}} \calE[\mu]^{\frac{1-\kappa}{1+\kappa}}.
\end{equation}
\end{lemma}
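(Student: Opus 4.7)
My plan is to simply execute the hint stated immediately before the lemma: substitute $\rho = \mu$ into the surrogate maximum principle \eqref{eq:max-surrogate} and then solve for $\int \bfV_{\delta}^{\mu} \dif\mu$ algebraically. The key observation is that the left-hand side of \eqref{eq:max-surrogate} with $\rho=\mu$ coincides with $\calE_\delta[\mu]$ by the definition
\[
\calE_\delta[\mu]
= \int_{\Set{\bfV^{\mu} \leq \delta}} w (\bfI^{*} \mu )^2
= \int_{T^n} \bfV_{\delta}^{\mu} \dif\mu,
\]
so the same quantity appears on both sides, which is what allows us to ``close up'' the inequality.

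Concretely, I would first write, applying \eqref{eq:max-surrogate} with $\rho=\mu$,
\[
\calE_\delta[\mu]
= \int \bfV_{\delta}^{\mu} \dif\mu
\leq C_{\eqref{eq:max-surrogate}} \bigl(\delta \abs{\mu} \bigr)^{\kappa} \bigl( \calE_\delta[\mu] \calE[\mu] \bigr)^{(1-\kappa)/2}.
\]
Then I would denote $E := \calE_\delta[\mu]$ (assuming $E>0$, otherwise the conclusion is trivial), divide both sides by $E^{(1-\kappa)/2}$, and combine exponents to get
\[
E^{(1+\kappa)/2}
\leq C_{\eqref{eq:max-surrogate}} (\delta \abs{\mu})^{\kappa} \calE[\mu]^{(1-\kappa)/2}.
\]
Raising to the power $2/(1+\kappa)$ yields
\[
E \leq C_{\eqref{eq:max-surrogate}}^{\frac{2}{1+\kappa}} (\delta \abs{\mu})^{\frac{2\kappa}{1+\kappa}} \calE[\mu]^{\frac{1-\kappa}{1+\kappa}},
\]
which is exactly \eqref{eq:cEcE}.

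There is essentially no obstacle here: the statement is a formal self-improvement of the surrogate maximum principle obtained by taking $\rho=\mu$ and exploiting that $\calE_\delta[\mu]$ appears linearly on one side and to the power $(1-\kappa)/2<1$ on the other. The only point worth noting is that one should first handle the trivial case $\calE_\delta[\mu]=0$ separately (or equivalently argue by approximation) to justify dividing both sides by $\calE_\delta[\mu]^{(1-\kappa)/2}$; after that, the conclusion is pure algebra.
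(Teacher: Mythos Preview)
Your proof is correct and follows exactly the approach indicated in the paper: substitute $\rho=\mu$ in \eqref{eq:max-surrogate}, use that $\int \bfV_\delta^\mu \dif\mu = \calE_\delta[\mu]$, and solve the resulting inequality for $\calE_\delta[\mu]$.
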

When using Lemma~\ref{partialV}, we can also denote $\frac{2\kappa}{1+\kappa}$ by the letter $\kappa$ again, which proves \eqref{eq:max-surrogate} for $n=1, 2, 3$.

\begin{conjecture}
\label{123}
For all  positive integers $n$
\begin{equation}
\label{eq:123}
\int \bfV_{\delta}^{\mu} \dif\mu
\leq C_n
(\delta \abs{\mu})^{\frac{2}{n+1}} \calE[\mu]^{\frac{n-1}{n+1}}.
\end{equation}
\end{conjecture}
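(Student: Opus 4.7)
The plan is to deduce \eqref{eq:123} from the surrogate maximum principle \eqref{eq:max-surrogate} with $\kappa = 1/n$, via Lemma \ref{partialV}: when $\kappa = 1/n$, one has $2\kappa/(1+\kappa) = 2/(n+1)$ and $(1-\kappa)/(1+\kappa) = (n-1)/(n+1)$, so \eqref{eq:123} is precisely what Lemma \ref{partialV} outputs. The strategy for proving \eqref{eq:max-surrogate} would extend the layer-cake argument of Lemma \ref{lem:bVdelta:3} from $3$-trees to $n$-trees.

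First I would generalize the splitting and upper bound. Iterating Lemma \ref{lem:split} in each coordinate gives
\[
(\bfI f)(\bfI g) \leq \bfI\Bigl(\sum_{A \subseteq [n]} I_A f \cdot I_{A^c} g\Bigr),\qquad I_A := \prod_{i \in A} I_i,
\]
which for $n=3$ is Lemma \ref{lem:split3}. Setting $g = f$, separating the degenerate terms $A \in \{\emptyset, [n]\}$, and invoking $\supp f \subseteq \{\bfI(wf) \leq \delta\}$ together with $\delta \leq \lambda/4$ then yields, exactly as in Corollary \ref{cor:upper-bd:3}, the layer upper bound $\bfI(wf)\,\one_{\lambda \leq \bfI(wf) \leq 2\lambda} \lesssim \bfI(w\phi_\lambda)$ with $w\phi_\lambda := \lambda^{-1}\sum_{\emptyset \neq A \subsetneq [n]} I_A(wf)\, I_{A^c}(wf)\, \one_{\bfI(wf) \leq 2\lambda}$. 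Here the tensorial structure of $w$ is used in the form $I_A(wf)\,I_{A^c}(wf) = w \cdot I_A(w_A f)\, I_{A^c}(w_{A^c} f)$ with $w_A := \prod_{i \in A} w_i$.

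The second step, which drives everything, is the energy inequality: for each proper nonempty $A \subset [n]$ one needs
\[
\int_{T^n} w \bigl(I_A(w_A f)\, I_{A^c}(w_{A^c} f)\bigr)^2 \one_{\bfI(wf) \leq \lambda} \lesssim_n \delta^{\frac{2}{n-1}} \lambda^{\frac{2(n-2)}{n-1}} \int_{T^n} wf^2.
\]
These exponents are exactly what Step 3 requires. For $n=2$ they reduce to $\delta^2\lambda^0$ (Lemma \ref{lem:en2:2}) and for $n=3$ to $\delta\lambda$ (Lemma \ref{lem:energy-bd:3}); in those cases the proof applies Lemma \ref{lem:split} to expand the square in a chosen coordinate of $A$, integrates by parts, and uses Corollary \ref{cor:1} with the $\delta$- and $\lambda$-support conditions to pick up the needed powers. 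Granted this estimate, the third step is the familiar layer-cake: with $f := \one_{\bfV^\mu \leq \delta}\bfI^*\mu$, split $\int \bfV_\delta^\mu \dif\rho$ into $\{\bfV_\delta^\mu \leq \lambda\}$ (at most $\lambda|\rho|$) plus dyadic shells $\{2^m\lambda < \bfV_\delta^\mu \leq 2^{m+1}\lambda\}$; bound each shell by $\int \bfI(w\phi_{2^m\lambda}) \dif\rho \leq (\int w\phi_{2^m\lambda}^2)^{1/2}\calE[\rho]^{1/2} \lesssim \delta^{\frac{1}{n-1}}(2^m\lambda)^{-\frac{1}{n-1}}\calE_\delta[\mu]^{1/2}\calE[\rho]^{1/2}$ using Step 2 and Cauchy--Schwarz; sum the convergent geometric series in $m$; and optimize in $\lambda > 0$ to obtain $\int \bfV_\delta^\mu \dif\rho \lesssim (\delta|\rho|)^{1/n}(\calE_\delta[\mu]\calE[\rho])^{(n-1)/(2n)}$, which is \eqref{eq:max-surrogate} with $\kappa = 1/n$.

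The hard part will be Step 2, and this is precisely the dimension-$4$ barrier that the paper flags. For $n \in \{2,3\}$ the target exponents $\tfrac{2}{n-1}$ and $\tfrac{2(n-2)}{n-1}$ are integer, and a single direct chain of manipulations with Lemma \ref{lem:split}, Corollary \ref{cor:1}, integration by parts, and the two support conditions suffices. For $n \geq 4$ they are strictly fractional, so they cannot arise from any finite iteration of these tools, all of which only extract integer powers of $\delta$ and $\lambda$. A natural remedy would be interpolation among integer-exponent estimates of the form $\delta^j \lambda^{n-1-j}$, indexed by how many times the $\delta$-support versus the $\lambda$-support is invoked; but the naive interpolation between the extremes $\delta\lambda^{n-2}$ and $\delta^{n-2}\lambda$ misses the target point $(\tfrac{2}{n-1},\tfrac{2(n-2)}{n-1})$ once $n \geq 4$, so one would need a richer family of such integer-exponent estimates together with a legitimate interpolation scheme in the tree-potential framework, or an altogether different inductive strategy — for instance slicing $T^n = T^{n-1}\times T$ and bootstrapping the $(n-1)$-dimensional case.
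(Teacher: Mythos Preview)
This statement is a \emph{conjecture} in the paper, not a theorem; the paper establishes it only for $n\in\{1,2,3\}$ and leaves $n\ge 4$ open. Your reduction---deduce \eqref{eq:123} from the surrogate maximum principle \eqref{eq:max-surrogate} with $\kappa=1/n$ via Lemma~\ref{partialV}---is exactly the paper's own derivation (the paper places Conjecture~\ref{123} immediately after Lemma~\ref{partialV} for precisely this reason), and your Steps~1 and~3 are the correct $n$-parameter generalizations of Lemma~\ref{lem:split3}, Corollary~\ref{cor:upper-bd:3}, and the layer-cake argument of Lemma~\ref{lem:bVdelta:3}; the arithmetic that leads to $\kappa=1/n$ after optimization in~$\lambda$ is right.

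The genuine gap is your Step~2, and you should understand that it is not merely ``hard'' but \emph{open}: the paper devotes Section~\ref{cannot} to explaining why the chain of manipulations behind Lemmas~\ref{lem:en2:2} and~\ref{lem:energy-bd:3} does not extend. Concretely, for $n=4$ the problematic summand is the balanced one $|A|=|A^c|=2$, i.e.\ $\int_{T^4} w\bigl(I_{12}(w_{12}f)\,I_{34}(w_{34}f)\bigr)^2$, and the paper states explicitly that no useful bound for this integral is known. Your interpolation idea does not obviously help: the integer-exponent endpoints you would interpolate between are themselves not available for the balanced term, because the mechanism that produces a factor of $\delta$ (Corollary~\ref{cor:1} plus the support condition, as in \eqref{eq:4}) requires peeling off a \emph{single} coordinate, and neither $I_{12}$ nor $I_{34}$ has such a coordinate to spare without running into the failure of Lemma~\ref{lem:supadditive-l1linf} on bi-trees (see Remarks~\ref{FirstDifficulty} and~\ref{SecondDifficulty}). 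So what you have written is a correct reduction of one open problem to another, not a proof.
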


\section{Carleson condition implies hereditary Carleson condition}

For an arbitrary set $E \subseteq T^{n}$, let
\[
\cE_E[\mu] :=\int_{E} w(\bfI^{*}\mu)^2.
\]
\begin{lemma}
\label{l:l2}
Let $w:T^n\to [0, \infty)$ be such that the surrogate maximal principle \eqref{eq:max-surrogate} holds.
Let $\nu: T^n \to [0,\infty)$ and
\begin{equation}
\label{eq:large-energy-downset}
E := \Set[\Big]{ \bfV^{\nu} > (2 C_{\eqref{eq:cEcE}})^{-1/\kappa} \frac{\calE[\nu]}{\abs{\nu }} } \subseteq T^{3}.
\end{equation}
Then
\begin{equation}\label{e:59}
\calE_{E}[\nu]
:=
\sum_{\alpha \in E} w(\alpha)(\bfI^{*}\nu(\alpha))^{2}
\geq
\frac{1}{2} \calE[\nu].
\end{equation}
\end{lemma}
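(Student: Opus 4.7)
The plan is to split the total energy at the prescribed sub-level set of $\bfV^{\nu}$ and use Lemma~\ref{partialV} to control the small-potential portion. I would set
$$
\delta := (2 C_{\eqref{eq:cEcE}})^{-1/\kappa}\,\frac{\calE[\nu]}{\abs{\nu}},
$$
so that, by definition, $E = \{\bfV^{\nu} > \delta\}$ and $E^{c} = \{\bfV^{\nu} \leq \delta\}$. From the definitions at the start of Section~\ref{T}, the contribution of $E^{c}$ to the energy is exactly the truncated energy:
$$
\calE_{E^{c}}[\nu] \;=\; \int_{\{\bfV^{\nu} \leq \delta\}} w\,(\bfI^{*}\nu)^{2} \;=\; \int \bfV_{\delta}^{\nu}\,d\nu \;=\; \calE_{\delta}[\nu].
$$
Because $\calE[\nu] = \calE_{E}[\nu] + \calE_{E^{c}}[\nu]$, the inequality \eqref{e:59} is equivalent to showing $\calE_{\delta}[\nu] \leq \tfrac{1}{2}\calE[\nu]$.

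For this step, I would apply Lemma~\ref{partialV} with $\mu = \nu$, using the convention (adopted in the paper immediately after that lemma) in which $\kappa$ has been relabeled so as to absorb the factor $2/(1+\kappa)$. This gives
$$
\calE_{\delta}[\nu] \;\leq\; C_{\eqref{eq:cEcE}}\,(\delta \abs{\nu})^{\kappa}\,\calE[\nu]^{\,1-\kappa}.
$$
Plugging in the chosen $\delta$ yields $(\delta\abs{\nu})^{\kappa} = (2 C_{\eqref{eq:cEcE}})^{-1}\calE[\nu]^{\kappa}$, so the right-hand side telescopes to exactly $\tfrac{1}{2}\calE[\nu]$, which is the required bound. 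Subtracting from $\calE[\nu]$ produces \eqref{e:59}.

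There is essentially no substantive obstacle here: Lemma~\ref{l:l2} is a calibrated restatement of Lemma~\ref{partialV}, with the threshold in the definition of $E$ chosen precisely so that no more than half of the total energy can sit at potential values below $\delta$. All use of the surrogate maximum principle is already packaged inside Lemma~\ref{partialV}; consequently the argument is uniform in $n$ whenever the surrogate maximum principle is available, and in particular unconditional for tensor product weights when $n \in \{1,2,3\}$.
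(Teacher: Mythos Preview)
Your proof is correct and follows exactly the same approach as the paper: set $\delta$ equal to the threshold defining $E$, use $\calE_{E}[\nu] = \calE[\nu] - \calE_{\delta}[\nu]$, and apply Lemma~\ref{partialV} (in its relabeled-$\kappa$ form) to bound $\calE_{\delta}[\nu]$ by $\tfrac{1}{2}\calE[\nu]$. The computation and the logic are identical to the paper's.
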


\begin{proof}
Put $\delta := (2 C_{\eqref{eq:cEcE}})^{-1/\kappa} \frac{\calE[\nu]}{\abs{\nu}}$.
By Lemma~\ref{partialV}, we have
\[
\calE_{E}[\nu]
=
\calE[\nu] - \calE_{\delta}[\nu]
\geq
\calE[\nu] - C_{\eqref{eq:cEcE}} (\delta \abs{\nu})^{\kappa} \calE[\nu]^{1-\kappa}
=
\calE[\nu]/2,
\]
and the claim follows.
\end{proof}

\begin{theorem}
\label{CtoREC}
%\hypothesisconj{}
Let $w:T^n\to [0, \infty)$ be such that the surrogate maximal principle \eqref{eq:max-surrogate} holds.
Then, for every $\mu:T^n\to [0, \infty0$, we have
\[
[w,\mu]_{HC} \lesssim [w,\mu]_{Car}.
\]
\end{theorem}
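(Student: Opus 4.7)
The plan is to use Lemma~\ref{l:l2} to extract, for any test set $E$, a downset $F$ on which a positive fraction of the energy of $\nu := \mu\one_E$ is concentrated, and then close the estimate by a quadratic bootstrap against $[w,\mu]_{HC}$ itself. The bootstrap is legitimate because $T^n$ is finite, so $[w,\mu]_{HC}$ is a priori finite.

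Concretely, fix $E \subseteq T^n$ and let $\nu := \mu\one_E$, so $|\nu| = \mu(E)$. Set $\delta := (2C_{\eqref{eq:cEcE}})^{-1/\kappa} \cE[\nu]/|\nu|$. Since $\bfV^\nu$ is nonincreasing along chains going upward in $T^n$, the set $F := \Set{\bfV^\nu > \delta}$ is a downset, and Lemma~\ref{l:l2} gives $\cE_F[\nu] \geq \tfrac12 \cE[\nu]$. Because $\nu \leq \mu$ pointwise and the Carleson condition applies on the downset $F$,
\[
\tfrac12 \cE[\nu] \;\leq\; \cE_F[\nu] \;\leq\; \cE_F[\mu] \;\leq\; [w,\mu]_C\, \mu(F) \;\leq\; [w,\mu]_C \bigl(|\nu| + \mu(F\setminus E)\bigr).
\]

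The remaining task is to bound $\mu(F\setminus E)$. On $F\setminus E$ one has the pointwise lower bound $\bfV^\nu > \delta$, and the adjoint identity $\sum_\alpha \bfI(f)(\alpha) g(\alpha) = \sum_\alpha f(\alpha) \bfI^*(g)(\alpha)$ combined with Cauchy--Schwarz in the $w$-weighted inner product gives
\[
\delta\, \mu(F \setminus E) \;\leq\; \int \bfV^\nu\, d(\mu\one_{F\setminus E}) \;=\; \sum_\beta w(\beta)\, \bfI^*\nu(\beta)\, \bfI^*(\mu\one_{F\setminus E})(\beta) \;\leq\; \cE[\nu]^{1/2}\, \cE[\mu\one_{F\setminus E}]^{1/2}.
\]
Invoking the hereditary Carleson bound $\cE[\mu\one_{F\setminus E}] \leq [w,\mu]_{HC}\, \mu(F\setminus E)$ and solving for $\mu(F\setminus E)$ yields, after inserting the explicit value of $\delta$,
\[
\mu(F\setminus E) \;\lesssim\; [w,\mu]_{HC}\, |\nu|^2 / \cE[\nu].
\]
Substituting back, multiplying by $\cE[\nu]/|\nu|^2$, and writing $A := \cE[\nu]/|\nu|$, one obtains the quadratic inequality
\[
A^2 \;\lesssim\; [w,\mu]_C\, A + [w,\mu]_C\, [w,\mu]_{HC}.
\]
Taking the supremum over $E$ makes the left-hand side $[w,\mu]_{HC}^2$, and since the linear term $[w,\mu]_C [w,\mu]_{HC}$ dominates $[w,\mu]_C A$, solving the resulting quadratic gives $[w,\mu]_{HC} \lesssim [w,\mu]_C$, as desired.

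The main conceptual obstacle is precisely the self-referential step (3): the bound on $\mu(F\setminus E)$ is obtained by invoking $[w,\mu]_{HC}$ itself, so one cannot hope for a direct chain of implications and must instead settle for a soft, square-root type bootstrap. This forces the whole argument into a quadratic inequality, and it is the reason one cannot replace $\mu(F\setminus E)$ by something ``cleaner'' such as $\mu(E)$ directly; without the downset $F$ provided by Lemma~\ref{l:l2}, which itself rests on the surrogate maximum principle, the Carleson hypothesis simply does not produce enough information about $\cE[\mu\one_E]$ for general $E$. A minor technical point is verifying that $\bfV^\nu$ is monotone along chains (so that $F$ really is a downset), which is immediate from the definition of $\bfI$.
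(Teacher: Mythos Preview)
Your proof is correct and follows essentially the same route as the paper's: extract the down-set $F=\{\bfV^\nu>\delta\}$ from Lemma~\ref{l:l2}, use the Carleson condition on $F$, and close via a Cauchy--Schwarz/hereditary-Carleson bootstrap that produces a quadratic inequality in $[w,\mu]_{HC}$. The paper merely organizes the same ingredients a bit more economically: it picks the maximizing set $E$ at the outset (so $A=\cE[\nu]/|\nu|=[w,\mu]_{HC}$ exactly), and instead of splitting $\mu(F)=\mu(F\cap E)+\mu(F\setminus E)$ it applies the Chebyshev/Cauchy--Schwarz step directly to all of $\mu$ on $F$, obtaining $cA\,\mu(F)\le \cE[\nu]^{1/2}\cE[\mu\one_F]^{1/2}\le (2\mu(F))^{1/2}(A\mu(F))^{1/2}$ in one line.
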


\begin{proof}
Without loss of generality $[w,\mu]_{Car}=1$.
Let
\begin{equation}
\label{eq:hereditary-const}
A := [w,\mu]_{HC}
=
\sup_{E \subseteq T^{n}, \mu(E) \neq 0} \frac{\calE[\mu\one_{E}]}{\mu(E)}.
\end{equation}
Since $T^{n}$ is finite, the constant $A$ is finite, and there exists a maximizer $E$ for \eqref{eq:hereditary-const}.
Let $\nu := \mu\one_{E}$ and
\begin{equation}
\label{e:57}
\calD := \Set[\Big]{ \bfV^{\nu} > c A }
\end{equation}
with a small constant $c$.
Then, by Lemma~\ref{l:l2}, we have
\[
\cE_{\cD}[\nu]\ge \frac 12 \cE[\nu]\,.
\]
Hence, $ 0<\cE[\nu]\le 2\cE_{\cD}[\nu] \le 2\cE_{\cD}[\mu] \le 2\mu(\cD)$.
In particular, $\mu(\cD)\neq 0$.

By definition, we have $\bfV^{\nu} > cA$ on $\cD$, and therefore
\[
cA \mu(\cD)
\le
\int_\cD\bfV^\nu\dif\mu
\le
\cE[\nu]^{1/2} \cE[\mu \one_{\cD}]^{1/2}
\le
(2 \mu(\cD))^{1/2} (A \mu(\cD))^{1/2}.
\]
It follows that $A \lesssim 1$.
\end{proof}

\section{Hereditary Carleson condition implies Carleson embedding}
\label{Hered}

\begin{theorem}
\label{murho}
%\hypothesisconj{}
Let $w:T^n\to [0, \infty)$ be such that the surrogate maximal principle \eqref{eq:max-surrogate} holds.
Let $\mu, \rho$ be positive measures on $T^{n}$ with
\begin{equation}
\label{c}
[w,\mu]_{REC} \leq 1,
\quad
[w,\rho]_{REC} \leq 1.
\end{equation}
Then, for some $\kappa'>0$, we have
\begin{equation}
\label{eq:1}
\int \bfV^{\mu} \dif\rho
\lesssim
\abs{\mu }^{1/2-\kappa'} \abs{\rho }^{1/2+\kappa'}.
\end{equation}
\end{theorem}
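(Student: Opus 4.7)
The idea is to split the potential at a threshold $\delta$, estimate the small-potential piece by the surrogate maximum principle and the large-potential piece by Cauchy--Schwarz on the super-level down-set, and then resolve a circular dependence by solving a quadratic inequality. Fix $\delta>0$ to be chosen, set $S:=\int\bfV^{\mu}\dif\rho$ and $E:=\{\bfV^{\mu}>\delta\}$. Since $\bfV^{\mu}$ is nonincreasing along chains of $T^{n}$, $E$ is a down-set. The hypothesis \eqref{c}, through the chain $[w,\mu]_{Box}\le[w,\mu]_{C}\le[w,\mu]_{HC}\le 1$ (and analogously for $\rho$), gives $\calE_{\delta}[\mu]\le\calE[\mu]\le\abs{\mu}$, $\calE[\rho]\le\abs{\rho}$, and also $\calE_{E}[\mu]\le\mu(E)$, $\calE_{E}[\rho]\le\rho(E)$ by the Carleson condition on the down-set $E$.

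Writing $\bfV^{\mu}=\bfV^{\mu}_{\delta}+\bfI(\one_{E}\,w\,\bfI^{*}\mu)$, applying \eqref{eq:max-surrogate} to the truncated piece, and moving $\bfI$ to $\bfI^{*}$ followed by Cauchy--Schwarz on the tail, we obtain
\[
S \;\le\; C(\delta\abs{\rho})^{\kappa}\bigl(\abs{\mu}\abs{\rho}\bigr)^{(1-\kappa)/2}+\calE_{E}[\mu]^{1/2}\calE_{E}[\rho]^{1/2}\;\le\;C(\delta\abs{\rho})^{\kappa}\bigl(\abs{\mu}\abs{\rho}\bigr)^{(1-\kappa)/2}+\mu(E)^{1/2}\rho(E)^{1/2}.
\]
The Chebyshev bound $\mu(E)\le\abs{\mu}/\delta$ is immediate from $\calE[\mu]\le\abs{\mu}$. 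The analogous bound $\rho(E)\le S/\delta$, however, is self-referential, obtained from $\delta\rho(E)\le\int_{E}\bfV^{\mu}\dif\rho\le S$. Substituting, we arrive at the quadratic inequality
\[
S \;\lesssim\; (\delta\abs{\rho})^{\kappa}\bigl(\abs{\mu}\abs{\rho}\bigr)^{(1-\kappa)/2}+\delta^{-1}\abs{\mu}^{1/2}S^{1/2}.
\]

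Solving for $S^{1/2}$ gives $S^{1/2}\lesssim \delta^{-1}\abs{\mu}^{1/2}+\delta^{\kappa/2}\abs{\mu}^{(1-\kappa)/4}\abs{\rho}^{(1+\kappa)/4}$, and balancing the two contributions with $\delta=(\abs{\mu}/\abs{\rho})^{(1+\kappa)/(2(2+\kappa))}$ yields
\[
S \;\lesssim\; \abs{\mu}^{1/(2+\kappa)}\abs{\rho}^{(1+\kappa)/(2+\kappa)}=\abs{\mu}^{1/2-\kappa'}\abs{\rho}^{1/2+\kappa'}
\]
with $\kappa':=\kappa/(2(2+\kappa))>0$, which is exactly \eqref{eq:1}. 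The delicate point, and main obstacle, is precisely the circular dependence: one cannot bound $\rho(\{\bfV^{\mu}>\delta\})$ independently of the quantity $S$ we are trying to estimate. What saves the argument is that $\mu(E)$ admits an unconditional bound, so the feedback only enters as $S^{1/2}$ rather than $S$, keeping the resulting inequality quadratic and hence explicitly solvable to produce the sub-diagonal exponents.
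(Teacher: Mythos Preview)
Your proof is correct and follows the same overall strategy as the paper: split $\bfV^{\mu}$ at a threshold $\delta$, bound the truncated piece by the surrogate maximum principle, bound the tail via Cauchy--Schwarz on the down-set $E=\{\bfV^{\mu}>\delta\}$, and optimize in $\delta$.

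The one substantive difference is in how the tail is controlled. You bound $\calE_{E}[\rho]\le\rho(E)$ and then estimate $\rho(E)$ self-referentially by $S/\delta$, which produces a quadratic inequality in $S^{1/2}$. The paper instead keeps $\calE_{E}[\rho]\le\calE[\rho]$ untouched and invests more effort in $\mu(E)$: from
\[
\delta\,\mu(E)\le\int_{E}\bfV^{\mu}\dif\mu\le\calE[\mu]^{1/2}\calE[\mu\one_{E}]^{1/2}\le\calE[\mu]^{1/2}\mu(E)^{1/2}
\]
(using the hereditary Carleson condition on $\mu\one_{E}$) one gets $\mu(E)^{1/2}\le\delta^{-1}\calE[\mu]^{1/2}$, which yields directly a \emph{linear} inequality $S\lesssim \delta^{\kappa}|\mu|^{(1-\kappa)/2}|\rho|^{(1+\kappa)/2}+\delta^{-1}|\mu|^{1/2}|\rho|^{1/2}$ and, after optimizing, $\kappa'=\kappa/(2(1+\kappa))$. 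Your route gives $\kappa'=\kappa/(2(2+\kappa))$, slightly weaker but equally sufficient for the statement. The trade-off: your argument uses only the Carleson condition on the down-set $E$ and simple Chebyshev, at the cost of the quadratic bootstrap; the paper exploits the full hereditary Carleson hypothesis more sharply and avoids the self-reference altogether.
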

\begin{remark}
This improves upon the estimate
\[
\int \bfV^{\mu} \dif\rho
\leq
\calE [\mu]^{1/2} \calE [\rho]^{1/2}
\lesssim
\abs{\mu }^{1/2} \abs{\rho }^{1/2}
\]
that is immediate by Cauchy--Schwarz and the Carleson condition.
\end{remark}

\begin{proof}
Let $\delta > 0$ be chosen later.
By \eqref{eq:max-surrogate} and \eqref{c}, we obtain
\[
\int \bfV_{\delta}^{\mu} \dif\rho
\lesssim
\delta^{\kappa} \abs{\mu}^{(1-\kappa)/2} \abs{\rho}^{(1+\kappa)/2}.
\]

Consider the down-set $E := \Set{\bfV^{\mu} > \delta} \subset T^{n}$.
By the Cauchy--Schwarz inequality and the Carleson condition, we have
\[
\int (\bfV^{\mu}-\bfV_{\delta}^{\mu}) \dif\rho
=
\int_{E} w \bfI^{*}\mu \bfI^{*}\rho
\leq
\calE_{E}[\mu]^{1/2} \calE_{E}[\rho]^{1/2}
\leq
\mu(E)^{1/2} \calE[\rho]^{1/2}.
\]
Note that
\begin{equation}
\label{eq:2}
\delta \mu(E)
\leq
\int_{E} \bfV^{\mu} \dif\mu
\leq
\calE [\mu]^{1/2} \calE [\mu\one_{E}]^{1/2}
\leq
\calE[\mu]^{1/2} \mu(E)^{1/2}
\end{equation}
by definition \eqref{eq:hereditary-Carleson} of the hereditary Carleson constant.
Hence, 
\[
\mu(E)^{1/2} \leq \delta^{-1} \calE[\mu]^{1/2},
\]
and it follows that
\[
\int (\bfV^{\mu}-\bfV_{\delta}^{\mu}) \dif\rho
\leq
\delta^{-1} \calE[\rho]^{1/2} \calE[\mu]^{1/2}.
\]
Hence,
\[
\int \bfV^{\mu} \dif\rho
\leq
C \delta^{\kappa} \abs{\mu}^{(1-\kappa)/2} \abs{\rho}^{(1+\kappa)/2}
+
\delta^{-1} \abs{\rho}^{1/2} \abs{\mu}^{1/2}.
\]
Optimizing in $\delta$, we obtain
\[
\int \bfV^{\mu} \dif\rho
\lesssim
\abs{\mu}^{\frac{1/2}{1+\kappa}} \abs{\rho}^{\frac{1/2+\kappa}{1+\kappa}}.
\qedhere
\]
\end{proof}

Exactly as in \cite[Theorem 6.3]{AMPVZ}, we can now prove the following result.
\begin{theorem}
\label{thm:HC=>CE}
%\hypothesisconj{}
Let $w:T^n\to [0, \infty)$ be such that the surrogate maximal principle \eqref{eq:max-surrogate} holds.
Then, for every $\mu:T^n \to [0, \infty)$, we have
\[
[w,\mu]_{CE} \lesssim [w,\mu]_{HC}.
\]
\end{theorem}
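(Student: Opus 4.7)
Normalise $[w,\mu]_{HC}\le 1$ and fix a nonnegative $\psi$; the goal is to show $\calE[\psi\mu]\lesssim\|\psi\|_{L^{2}(\mu)}^{2}$. Following the scheme of \cite[Theorem 6.3]{AMPVZ}, I will decompose $\psi$ by layer cake, expand $\calE[\psi\mu]$ as a double integral of cross-potentials, apply Theorem~\ref{murho} to each cross term, and reduce the remaining estimate to a Hardy-type inequality whose convergence hinges on the strict gain $\kappa'>0$.

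\textbf{Layer-cake expansion.} Set $\mu_{t}:=\mu\one_{\{\psi>t\}}$ and $a_{t}:=|\mu_{t}|=\mu(\{\psi>t\})$, so $\psi\mu=\int_{0}^{\infty}\mu_{t}\dif t$ and $a$ is nonincreasing in $t$. Each $\mu_{t}$ is a restriction of $\mu$, hence inherits $[w,\mu_{t}]_{HC}\le 1$. Using the symmetry $\int\bfV^{\mu_{s}}\dif\mu_{t}=\int\bfV^{\mu_{t}}\dif\mu_{s}$,
\[
\calE[\psi\mu]=\int_{0}^{\infty}\!\!\int_{0}^{\infty}\!\!\int\bfV^{\mu_{s}}\dif\mu_{t}\,\dif s\,\dif t=2\int_{0}^{\infty}\!\!\int_{0}^{t}\!\!\int\bfV^{\mu_{s}}\dif\mu_{t}\,\dif s\,\dif t.
\]
For $s\le t$ one has $a_{s}\ge a_{t}$, so placing the smaller exponent on the larger mass in Theorem~\ref{murho} gives $\int\bfV^{\mu_{s}}\dif\mu_{t}\lesssim a_{s}^{1/2-\kappa'}a_{t}^{1/2+\kappa'}$, whence
\[
\calE[\psi\mu]\lesssim\int_{0}^{\infty}a_{t}^{1/2+\kappa'}\int_{0}^{t}a_{s}^{1/2-\kappa'}\,\dif s\,\dif t.
\]

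\textbf{Hardy-type closure.} It then suffices to prove, for any nonincreasing $a\ge 0$,
\[
\int_{0}^{\infty}a_{t}^{1/2+\kappa'}\int_{0}^{t}a_{s}^{1/2-\kappa'}\,\dif s\,\dif t\lesssim_{\kappa'}\int_{0}^{\infty}ta_{t}\,\dif t=\tfrac{1}{2}\|\psi\|_{L^{2}(\mu)}^{2}.
\]
Discretise on dyadic intervals $[2^{k-1},2^{k}]$ with $b_{k}:=2^{k}a_{2^{k}}^{1/2}$ (so $\sum_{k}b_{k}^{2}\sim\int ta_{t}\dif t$). Substituting $j=m-k\ge 0$, the discretised left-hand side becomes $\sum_{j\ge 0}2^{-2j\kappa'}\sum_{k}b_{k+j}^{1+2\kappa'}b_{k}^{1-2\kappa'}$. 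For each fixed $j$, Hölder with conjugate exponents $\tfrac{2}{1+2\kappa'}$ and $\tfrac{2}{1-2\kappa'}$ combined with shift-invariance yields $\sum_{k}b_{k+j}^{1+2\kappa'}b_{k}^{1-2\kappa'}\le\sum_{k}b_{k}^{2}$, and summing the convergent geometric series $\sum_{j\ge 0}2^{-2j\kappa'}<\infty$ closes the estimate.

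\textbf{Main obstacle.} The scheme collapses at $\kappa'=0$: the double integral then reduces to $(\int_{0}^{\infty}a_{t}^{1/2}\dif t)^{2}$, which Cauchy--Schwarz cannot bound by $\int ta_{t}\dif t$ (it would require $\int t^{-1}\dif t<\infty$). Thus the entire argument rests on the strict improvement over the naive energy pairing supplied by Theorem~\ref{murho}, which itself is powered by the surrogate maximum principle (Lemma~\ref{lem:bVdelta:2} for $n=2$, Lemma~\ref{lem:bVdelta:3} for $n=3$); this is precisely where the restriction $n\le 3$ enters.
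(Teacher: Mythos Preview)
Your argument is correct and is precisely the layer-cake-plus-cross-potential scheme of \cite[Theorem~6.3]{AMPVZ} that the paper invokes: the decomposition $\psi\mu=\int_{0}^{\infty}\mu_{t}\,\dif t$, the application of Theorem~\ref{murho} to each pair $(\mu_{s},\mu_{t})$, and the dyadic Hardy estimate closing the double integral are exactly the intended steps. The observation that $\kappa'>0$ is essential (the geometric factor $2^{-2j\kappa'}$ would otherwise fail to sum) is also the same point emphasised in \cite{AMPVZ}.
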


%%%%%%%%%%%%%%%%%%%%%%%%%%
%%%%%%%%%%%%%%%%%%%%%%%%%%%%%%%%%%%%%%%%%

\section{Box condition implies hereditary Carleson}
\label{prep}

\subsection{Main estimate}
\label{s1main}

Define
\begin{align}
\label{eq:V-interval}
\bfV^\nu_P(\om)
&:=
\sum_{Q:\om \le Q\le P} w(Q)\bfI^{*}\nu(Q),
\\
\bfV^\mu_{\eps', good}(\omega)
&:=
\sum_{P \geq \omega: \bfV_P(\om) > \eps'} (w\bfI^{*}\mu)(P).
\end{align}

\begin{lemma}
\label{2main}
Let $n\geq 2$ and $\mu:T^n \to [0, \infty)$.
%\hypothesisconj{}
Let $w:T^n\to [0, \infty)$ be such that the surrogate maximal principle \eqref{eq:max-surrogate} holds.
Assume that $\calE[\mu] \leq \abs{\mu}$ and
\begin{equation}
\label{eq:V-lower-bd}
\bfV^{\mu} \geq 1/3
\quad\text{on } \supp \mu.
\end{equation}
Then, if $\epsilon'$ is small enough, we have
\[
\int \bfV^{\mu}_{\eps', good} \dif \mu
\gtrsim
\abs{\mu}.
\]
\end{lemma}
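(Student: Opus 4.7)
The plan is to decompose $\calE[\mu] = \int \bfV^\mu \dif\mu = \int \bfV^\mu_{\eps',good}\dif\mu + \int \bfV^\mu_{\eps',bad}\dif\mu$, where $\bfV^\mu_{\eps',bad}(\omega) := \sum_{P\ge\omega,\,\bfV^\mu_P(\omega)\le\eps'} (w\bfI^*\mu)(P)$. The pointwise lower bound $\bfV^\mu \ge 1/3$ on $\supp\mu$ gives $\int \bfV^\mu\dif\mu \ge \abs{\mu}/3$, so the lemma reduces to showing $\int \bfV^\mu_{\eps',bad}\dif\mu \le \abs{\mu}/6$ for a sufficiently small absolute constant $\eps'$.

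To bound the bad integral, I would exchange the order of summation:
\[
\int \bfV^\mu_{\eps',bad}\dif\mu \;=\; \sum_P w(P)\bfI^*\mu(P)\,\mu(B(P)), \qquad B(P):=\{\omega\le P:\bfV^\mu_P(\omega)\le\eps'\}.
\]
Since $\bfV^\mu_P(\omega)$ already contains the $Q=P$ summand $w(P)\bfI^*\mu(P)$, any nonzero term in the above sum forces $w(P)\bfI^*\mu(P)\le\eps'$; in fact every $Q$ in the ``box'' $[\omega,P]$ must satisfy $w(Q)\bfI^*\mu(Q)\le\eps'$. Using the pointwise inequality $w(P)\bfI^*\mu(P)\le\eps'$ and exchanging sums once more, one obtains
\[
\int \bfV^\mu_{\eps',bad}\dif\mu \;\le\; \eps' \sum_\omega \mu(\omega)\,\abs{M(\omega)},\qquad M(\omega):=\{P\ge\omega:\bfV^\mu_P(\omega)\le\eps'\}.
\]

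The remaining task is to bound $\sum_\omega \mu(\omega)\,\abs{M(\omega)}$ by an absolute multiple of $\abs{\mu}$, or at worst by a mild negative power of $\eps'$ times $\abs{\mu}$. The plan is to layer $M(\omega)$ by the dyadic level of the full potential $\bfV^\mu(P)$, apply the surrogate maximum principle in the form of Lemma~\ref{partialV} on each layer, use the Carleson hypothesis $\calE[\mu]\le\abs{\mu}$ to collapse the resulting inequality to the clean form $\calE_{\{\bfV^\mu\le\delta\}}[\mu] \lesssim \delta^{\kappa}\abs{\mu}$, and sum the resulting geometric series in the dyadic level to produce an estimate of the shape $\eps'^{\beta}\abs{\mu}$ for some $\beta>0$.

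The main obstacle lies precisely in controlling the combinatorial size of $M(\omega)$ in the multi-tree setting. For $n=1$, $M(\omega)$ is a prefix of a chain, which yields the pointwise bound $\bfV^\mu_{\eps',bad}(\omega)\le\eps'$ and collapses the whole argument to one line. For $n\ge 2$, however, $M(\omega)$ is a multi-tree down-set with potentially many maximal elements and admits no such pointwise bound, and this is exactly the place where the quantitative surrogate maximum principle replaces the classical maximum principle that fails on multi-trees. The restriction $n\ge 2$ in the hypothesis signals that for $n=1$ the result is trivial, while for $n\in\{2,3\}$ the surrogate maximum principles established in Lemmas~\ref{lem:bVdelta:2} and \ref{lem:bVdelta:3} should supply exactly the quantitative control needed to convert the partial-potential truncation defining $M(\omega)$ into a usable energy estimate.
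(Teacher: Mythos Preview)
Your opening decomposition is valid, and the reduction to $\int \bfV^\mu_{\eps',bad}\dif\mu \le |\mu|/6$ is the right target. The gap is exactly where you place it: after the exchange of sums, the bound $\int \bfV^\mu_{\eps',bad}\dif\mu \le \eps'\sum_\omega \mu(\omega)\,|M(\omega)|$ is correct but unusable, and your plan to rescue it via Lemma~\ref{partialV} cannot work. A single example shows why. On a bi-tree of depth $N$ with $w\equiv 1$, take $\mu$ to be a point mass of size $m=1/(3(N+1)^2)$ at a minimal vertex $\omega_0$; then $\bfV^\mu(\omega_0)=1/3$ and $\calE[\mu]=m/3\le m=|\mu|$. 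A hyperbola count gives $|M(\omega_0)|\asymp \eps'(N+1)^2\log(1/\eps')$, so $\sum_\omega\mu(\omega)\,|M(\omega)|/|\mu|=|M(\omega_0)|\to\infty$ as $N\to\infty$. Hence no bound of the form $\sum_\omega\mu(\omega)\,|M(\omega)|\le C(\eps')\,|\mu|$ can hold with $C(\eps')$ independent of the tree, ruling out both the ``absolute multiple'' and the ``mild negative power of $\eps'$'' alternatives. The underlying reason is that by replacing $w(P)\bfI^*\mu(P)$ with $\eps'$ you discard all information about the actual size of the summands; and Lemma~\ref{partialV} truncates by the \emph{full} potential $\bfV^\mu(Q)\le\delta$, which is unrelated to the \emph{box} condition $\bfV^\mu_P(\omega)\le\eps'$ defining $M(\omega)$.

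The paper's argument is structurally different: it does not bound the bad integral at all. Instead it fixes a nested sequence of thresholds $\eps_1>\cdots>\eps_{n-1}$ with $\eps_{j+1}=\eps\,\eps_j^{1/\kappa}$, sets $\eps'=\eps\eps_1\cdots\eps_{n-1}$, and proves a \emph{pointwise} lower bound $\bfV^\mu_{\eps',good}(\omega)\ge\eps_{n-1}$ for a $\mu$-proportion $\ge 1/2$ of $\omega$ (those for which each $\bfV^\mu_{\eps_j}(\omega)$ is small, which Lemma~\ref{partialV} guarantees via Chebyshov). The heart of the proof is a recursive combinatorial covering: if the up-set $\calU=\{P\ge\omega:\bfV^\mu_P(\omega)>\eps'\}$ contains some point with $\bfV^\mu>\eps_{n-1}$, the claim is immediate; otherwise one covers $\{P\ge\omega:\bfV^\mu(P)>\eps_1\}$ by $O\bigl((\eps_1\cdots\eps_{n-1})^{-1}\bigr)$ boxes $\downset q$ with $q\notin\calU$, each contributing at most $\eps'$ to $\bfV^\mu(\omega)$, which forces $\bfV^\mu(\omega)\lesssim\eps$ and contradicts $\bfV^\mu(\omega)\ge 1/3$. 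The nested thresholds $\eps_j$ are precisely what control the cardinality of the covering family at each inductive stage---this is the structural input your approach lacks.
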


\begin{proof}[Proof of Lemma~\ref{2main}]
It suffices to show that, for some $\epsilon'$ and $\epsilon_{n-1}$, we have
\[
\mu \Set{ \omega \in T^{n} \given \bfV^{\mu}_{\eps', good}(\omega) \ge \eps_{n-1} }
\geq
\abs{\mu}/2.
\]
Let $\epsilon > 0$ be chosen later and define
\[
\epsilon_{1} := \epsilon,
\quad
\epsilon_{2} := \epsilon\epsilon_{1}^{1/\kappa},
\quad
\epsilon_{3} := \epsilon\epsilon_{2}^{1/\kappa},
\dotsc
\]
By Lemma~\ref{partialV}, we have
\[
\int \bfV^{\mu}_{\epsilon_{j}} \dif\mu
\lesssim
\epsilon_{j}^{\kappa} \abs{\mu}^{\kappa} \calE[\mu]^{1-\kappa}
\lesssim
\epsilon_{j}^{\kappa} \int \dif\mu
\]
for some $\kappa>0$.
By Chebyshov's inequality, it follows that
\begin{equation}
\label{eq:7}
\bfV^\mu_{\eps_{j}}(\om) \le (\epsilon_{j}/\epsilon)^{\kappa}/10
\end{equation}
for a proportion $\ge (1-C\epsilon^{\kappa})$ of $\omega$'s.
So we only consider $\omega$'s for which \eqref{eq:7} holds for all $j=1,\dotsc,n-1$.
Similarly, we may restrict to those $\omega$'s for which $\bfV^{\mu}(\omega) \lesssim 1$.

Let
\[
\epsilon' := \epsilon \cdot \epsilon_{1} \dotsm \epsilon_{n-1}.
\]
For a fixed $\omega$, let
\begin{equation}
\label{eq:calU}
\calU := \Set{ Q\geq \omega \given \bfV_{Q}(\omega) > \epsilon' }
\end{equation}
and
\begin{equation}
\calW_{j} := \Set{ Q\geq \omega \given \bfV^\mu(Q) \le \epsilon_{j} },
\quad
1 \leq j \leq n-1.
\end{equation}
For $p \in T^{n}$, write
\[
\upset p := \Set{\alpha \in T^{n} \given \alpha\geq p}.
\]
For $p \in \upset \omega$, let
\[
\downset p := \Set{\alpha \in T^{n} \given \omega \leq \alpha \leq p}.
\]

If $\calU \not\subseteq \calW_{n-1}$, then this means that there exists $p\not\in \calW_{n-1}$ with $\upset p \subseteq \calU$.
Hence,
\[
\bfV^\mu_{\eps', good}(\omega)
\geq
\sum_{p' \in \upset p} w\mu(p')
=
\bfV^{\mu}(p)
\geq
\epsilon_{n-1}.
\]

Assume now that $\calU \subseteq \calW_{n-1}$.
In this case, we will cover $\upset\omega \setminus \calW_{1}$ by boundedly many sets of the form $\downset q$ with $q \in \upset\omega \setminus \calU$.
This will lead to a contradiction with \eqref{eq:V-lower-bd}, since, by \eqref{eq:7} and \eqref{eq:calU}, the integral of
\[
f := w \bfI^{*}\mu
\]
is small on $\calW_{1}$ and on each such set $\downset q$.

For a set of coordinates $J \subseteq \Set{1,\dotsc,n}$ and a point $p \in T^{n}$, let
\[
\upset_{J} p := \Set{ q \in T^{n} \given q_{j}\geq p_{j} \text{ for } j\in J,\ q_{j}=p_{j} \text{ for } j\not\in J}.
\]

Given $J \subseteq \Set{1,\dotsc,n}$ with $J \neq \emptyset$ and $p \in T^{n}$, we define a set $\calQ_{J}(p) \subset T^{n}$ as follows.
If $\abs{J}=1$, then $\calQ_{J}(p)$ consists of the (unique) maximal element of $\upset_{J}p \setminus \calU$, if the latter set is nonempty, and is empty otherwise.
If $\abs{J} \geq 2$, then $\calQ_{J}(p)$ is a maximal set of maximal elements of $\upset_{J}p \setminus \calW_{n-\abs{J}+1}$ such that the sets $\upset_{J} q \setminus \calW_{n-\abs{J}+2}$ are pairwise disjoint for $q\in\calQ_{J}(p)$.

Then, recursively, let $\calR_{\emptyset}(p) := \Set{p}$,
\[
\calR_{J}(p) := \cup_{J' \subset J} \cup_{p' \in \calQ_{J}(p)} \calR_{J'}(p'),
\]
where the first union runs ovel all subsets of $J$ with cardinality $\abs{J'}=\abs{J}-1$.

We claim that, for every $p\in\upset\omega$ and every $J \subseteq \Set{1,\dotsc,n}$ with $J\neq\emptyset$, we have
\begin{equation}
\label{eq:boxes-cover}
\bigcup_{p' \in \calR_{J}(p)} \downset p'
\supseteq
\upset_{J} p \setminus \calW_{n-\abs{J}+1},
\end{equation}
where we set $\calW_{n} := \calU$ to simplify notation.
We prove \eqref{eq:boxes-cover} by induction on $\abs{J}$.
For $\abs{J}=1$, the claim \eqref{eq:boxes-cover} obviously holds.
Let now $J$ with $\abs{J}\geq 2$ be given, and suppose that \eqref{eq:boxes-cover} is known for all proper subsets of $J$.
Let
\[
\calD := \bigcup_{p' \in \calR_{J}(p)} \downset p',
\quad
\calP := \upset_{J} p \setminus \calW_{n-\abs{J}+1}.
\]
By the inductive hypothesis,
\begin{equation}
\label{eq:8}
\calD \supseteq \upset_{J'} p' \setminus \calW_{n-\abs{J}+2}
\end{equation}
for every $p'\in Q_{J}(p)$ and every $J' \subsetneq J$.
Suppose that
\begin{equation}
\label{eq:10}
\calD \not\supseteq \calP.
\end{equation}
Choose a maximal $q \in \calP\setminus\calD$.
Since $\calD$ is a down-set, $q$ is also a maximal element of $\calP$.
We claim that
\begin{equation}
\label{eq:9}
(\upset_{J} q \cap \upset_{J}p')\setminus\calW_{n-\abs{J}+2} = \emptyset
\text{ for all } p'\in\calQ_{J}(p).
\end{equation}
Indeed, suppose for a contradiction that there exists $q' \in (\upset_{J} q \cap \upset_{J}p')\setminus\calW_{n-\abs{J}+2}$, and let $q'$ be minimal with this property.
Since $\calW_{n-\abs{J}+2}$ is an up-set, $q'$ is also a minimal element of $\upset_{J} q \cap \upset_{J}p'$.
Since $q,p' \in \upset_{J}p$, $q'$ is in fact the coordinatewise maximum of $q,p'$.
Since $q$ and $p'$ are distinct maximal elements of $\calP$, in fact $q'$ coincides with $p'$ in at least one coordinate, so $q' \in \upset_{J'}p'$ for some $J' \subsetneq J$.
Now, \eqref{eq:8} implies that $q' \in \calD$, and, since $\calD$ is a down-set and $q' \geq q$, also $q\in\calD$, a contradiction.

Therefore, \eqref{eq:9} holds.
But this contradicts the maximality of $\calQ_{J}(p)$.
Thus the assumption \eqref{eq:10} is false, and we obtain \eqref{eq:boxes-cover}.

Let $p\geq \omega$.
For $2 \leq \abs{J} \leq n$, we have
\begin{align*}
1
&\gtrsim
\bfV^{\mu}(\omega)
\\ &\geq
\bfV^{\mu}(p)
\\ &\geq
\sum_{q\in\calQ_{J}(p)} \int_{\upset_{J} q \setminus \calW_{n-\abs{J}+2}} f
\\ &\geq
\sum_{q\in\calQ_{J}(p)} (\bfI f(q) - \bfI (f \one_{\calW_{n-\abs{J}+2}})(\omega))
\intertext{by definition of $q\in\calW_{n-\abs{J}+1}$ and by \eqref{eq:7},}
&\geq
\sum_{q\in\calQ_{J}(p)} (\epsilon_{n-\abs{J}+1} - (\epsilon_{n-\abs{J}+2}/\epsilon)^{1/2}/10)
\\ &\gtrsim
\abs{\calQ_{J}(p)} \epsilon_{n-\abs{J}+1}.
\end{align*}

It follows that
\[
\epsilon_{1}\dotsm\epsilon_{n-1}\abs{\calR_{\Set{1,\dotsc,n}}(\omega)} \lesssim 1.
\]
Hence, by \eqref{eq:boxes-cover},
\begin{align*}
\bfV^\mu(\om)-\bfV^\mu_\eps(\om)
&=
\int_{\upset \omega \setminus \calW_{1}} f
\\ &\leq
\sum_{p' \in \calR_{\Set{1,\dotsc,n}}(\omega)} \int_{\downset p'} f
\\ &=
\sum_{p' \in \calR_{\Set{1,\dotsc,n}}(\omega)} \bfV_{p'}^{\mu}(\omega)
\\ &\leq
\epsilon' \abs{\calR_{\Set{1,\dotsc,n}}(\omega)}
\\ &\lesssim
\frac{\epsilon'}{\epsilon_{1}\dotsm\epsilon_{n-1}}
= \epsilon.
\end{align*}
Therefore, by \eqref{eq:7},
\[
1/3
\leq
\bfV^\mu(\om)
=
(\bfV^\mu(\om)-\bfV^\mu_\eps(\om)) + \bfV^\mu_\eps(\om)
\leq
C\epsilon + 1/10.
\]
This inequality is false if $\epsilon$ is sufficiently small, contradicting the assumption $\calU \subseteq \calW_{n-1}$.
\end{proof}

\subsection{Box condition implies hereditary Carleson}
\label{boxHC}
We refer to \cite[Lemma 3.1]{AHMV} or \cite[Lemma 7.1]{AMPVZ} for the following lemma.
\begin{lemma}[Balancing lemma]
\label{lem:balance}
Let $\nu: T^n \to [0, \infty)$ with
\[
\calE[\nu]
=
\int \bfV^{\nu}\dif\nu
\geq
A \abs{\nu}.
\]
Then there exists a down-set $\tilde{E} \subset T^{n}$ such that for the measure $\tilde{\nu} := \nu\one_{\tilde{E}}$ we have
\[
\bfV^{\tilde{\nu}} \geq \frac{A}{3}
\quad \text{on } \tilde{E},
\]
and 
\[
\mathcal{E}[\tilde{\nu}] \geq \frac{1}{3} \mathcal{E}[\nu].
\]
\end{lemma}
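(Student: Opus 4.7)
The plan is to build $\tilde E$ by an iterative stopping-time procedure. A one-shot choice such as $\tilde E := \{\omega : \bfV^\nu(\omega) \geq A/3\}$ is already a down-set (since $\bfV^\nu$ is order-decreasing in $\omega$), but the passage from $\nu$ to $\nu \one_{\tilde E}$ may cause the new potential to drop below $A/3$ at points of $\tilde E$: on a multi-tree, unlike on a $1$-tree, two incomparable points can share a common upper bound through which mass living \emph{outside} $\tilde E$ contributes to $\bfV^\nu$ but not to $\bfV^{\tilde\nu}$. This is the same failure of the maximum principle highlighted in Sections \ref{degree} and \ref{degree3}, and it is the conceptual obstacle that forces the iteration.

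Starting from $E_0 := T^n$, $\nu_0 := \nu$, I would set inductively
\[
\nu_k := \nu \one_{E_k},
\qquad
E_{k+1} := \Set{\omega \in E_k \given \bfV^{\nu_k}(\omega) \geq A/3}.
\]
Since $E_k$ is a down-set and $\bfV^{\nu_k}$ is order-decreasing, $E_{k+1}$ is again a down-set; because $T^n$ is finite, the sequence stabilizes at some $\tilde E$, and by construction $\bfV^{\tilde\nu} \geq A/3$ on $\tilde E$, which gives the first conclusion.

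To track the energy loss, let $B_k := E_k \setminus E_{k+1}$ and $\mu_k := \nu \one_{B_k}$, so that $\nu_k = \nu_{k+1} + \mu_k$ with $\nu_{k+1} \leq \nu_k$ pointwise. Denote by $\langle \cdot,\cdot \rangle$ the bilinear form polarizing $\cE$, i.e.\ $\langle \mu,\rho\rangle = \int w\, \bfI^*\mu\, \bfI^*\rho = \int \bfV^\mu \dif\rho$. Expansion gives
\[
\cE[\nu_k]-\cE[\nu_{k+1}]
=\langle \nu_k,\mu_k\rangle+\langle \nu_{k+1},\mu_k\rangle
\leq 2\langle \nu_k,\mu_k\rangle.
\]
Since $\mu_k$ is supported on $B_k$, where $\bfV^{\nu_k}<A/3$ by the definition of $E_{k+1}$,
\[
\langle \nu_k,\mu_k\rangle = \int_{B_k}\bfV^{\nu_k}\dif\nu \leq \tfrac{A}{3}\abs{\mu_k}.
\]
Summing in $k$ and using that the $B_k$ are pairwise disjoint subsets of $T^n$,
\[
\cE[\nu]-\cE[\tilde\nu]\leq \tfrac{2A}{3}\sum_k \abs{\mu_k}\leq \tfrac{2A}{3}\abs{\nu}\leq \tfrac{2}{3}\cE[\nu],
\]
where the last inequality is the hypothesis $\cE[\nu]\geq A\abs{\nu}$. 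Rearranging produces $\cE[\tilde\nu]\geq \cE[\nu]/3$, which is the second conclusion.

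The one subtle point I expect to be the main obstacle is realizing that at each step the energy lost should be controlled \emph{linearly} in the removed mass $\abs{\mu_k}$, via the identity $\cE[\nu_k]-\cE[\nu_{k+1}]=\langle \nu_k+\nu_{k+1},\mu_k\rangle$, rather than via the (harder to control) quadratic energy $\cE[\mu_k]$ of the removed piece. Once this book-keeping is in place the argument collapses to a telescoping sum, and the hypothesis $\cE[\nu]\geq A\abs{\nu}$ is exactly the right strength to absorb the $2A/3$ coming from the stopping threshold.
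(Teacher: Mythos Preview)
Your proof is correct. The paper does not give its own argument for this lemma; it simply cites \cite[Lemma 3.1]{AHMV} and \cite[Lemma 7.1]{AMPVZ}. The iterative stopping-time construction you wrote down --- repeatedly restricting to $\{\bfV^{\nu_k}\ge A/3\}$ until stabilization and controlling the energy loss via $\cE[\nu_k]-\cE[\nu_{k+1}]=\langle \nu_k+\nu_{k+1},\mu_k\rangle\le 2\int_{B_k}\bfV^{\nu_k}\dif\nu<\tfrac{2A}{3}\abs{\mu_k}$ --- is exactly the standard proof from those references, so there is nothing to add.
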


The next result contains the last missing inequality in Theorem~\ref{thm:main}.
\begin{theorem}
\label{thm:box=>HC}
Let $n\geq 2$.
%\hypothesisconj{}
Let $w:T^n\to [0, \infty)$ be such that the surrogate maximal principle \eqref{eq:max-surrogate} holds.
Then, for every $\nu: T^n \to [0, \infty)$, we have
\[
[w,\nu]_{HC} \lesssim [w,\nu]_{Box}.
\]
\end{theorem}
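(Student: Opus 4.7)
By finiteness of $T^n$, choose a set $E \subseteq T^n$ at which the supremum defining $A := [w,\nu]_{HC}$ is attained; after normalizing $[w,\nu]_{Box} = 1$, this means $\calE[\nu\one_E] = A\,\nu(E)$, and the goal is to show $A \lesssim 1$.

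First apply the Balancing Lemma (Lemma~\ref{lem:balance}) to $\mu_0 := \nu\one_E$, obtaining a down-set $\tilde E$ such that $\mu := \nu\one_{E \cap \tilde E}$ satisfies $\bfV^\mu \ge A/3$ on $\tilde E$ (in particular on $\supp\mu$) and $\calE[\mu] \ge \calE[\mu_0]/3 = A|\mu_0|/3$. Together with the hereditary bound $\calE[\mu] \le A|\mu|$ applied to the set $E \cap \tilde E$, this forces $|\mu| \gtrsim |\mu_0| = \nu(E)$. Rescale $\bar\mu := \mu/A$: then $\bfV^{\bar\mu} \ge 1/3$ on $\supp\bar\mu$ and $\calE[\bar\mu] = \calE[\mu]/A^2 \le |\mu|/A = |\bar\mu|$, which are precisely the hypotheses of the main estimate Lemma~\ref{2main}.

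Applying Lemma~\ref{2main} to $\bar\mu$ and undoing the rescaling yields the lower bound
\[
X \;:=\; \int \bfV^{\mu}_{\epsilon' A,\,\mathrm{good}} \,\dif\mu
\;\gtrsim\; A|\mu|.
\]
For a matching upper bound, swap the order of summation to write $X = \sum_P w(P)\,\bfI^*\mu(P)\,\mu(S_P)$ with $S_P := \{\omega \le P : \bfV^\mu_P(\omega) > \epsilon' A\}$. A Chebyshev-type estimate combined with a Fubini swap inside $\bfV^\mu_P$ gives
\[
\epsilon' A\,\mu(S_P) \;\le\; \int_{S_P} \bfV^\mu_P\,\dif\mu \;\le\; \sum_{Q\le P} w(Q)\,\bfI^*\mu(Q)\,\mu(S_P \cap \downset Q) \;\le\; \calE_{\downset P}[\mu],
\]
and $\bfI^*\mu \le \bfI^*\nu$ together with the box condition on $\nu$ upgrades this to $\mu(S_P) \le \bfI^*\nu(P)/(\epsilon' A)$. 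Hence
\[
X \;\le\; \frac{1}{\epsilon' A}\sum_P w(P)\,\bfI^*\mu(P)\,\bfI^*\nu(P).
\]

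The decisive final step is to bound the remaining bilinear sum by a universal constant times $|\mu|$; combined with $X \gtrsim A|\mu|$ this would give $A^2 \lesssim 1/\epsilon'$, and hence $A \lesssim 1$. I expect this to be the main obstacle: a straight application of Cauchy--Schwarz delivers only $\sqrt{\calE[\mu]\calE[\nu]} \le \sqrt{A|\mu|\cdot|\nu|}$, which is not universal in $|\nu|/|\mu|$. Closing the gap should exploit the extremality of $E$ for the HC ratio and the box condition on $\nu$ localized to well-chosen Carleson boxes capturing $\supp\mu$ (equivalently, a stopping-time decomposition of the supports), paralleling the analogous bi-tree argument in~\cite{AMPVZ}; this is the only place where the surrogate maximum principle hypothesis re-enters, by controlling the distribution of $\bfV^\nu$ relative to its mean $\calE[\nu]/|\nu|$.
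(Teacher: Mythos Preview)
Your setup through the rescaling and application of Lemma~\ref{2main} is correct and matches the paper exactly. The swap of summation and the Chebyshev estimate $\epsilon' A\,\mu(S_P)\le \calE_{\downset P}[\mu]\le \calE_P[\nu]\le \bfI^*\nu(P)$ is also essentially the same bound the paper uses. But the argument, as you yourself anticipate, does not close from
\[
A|\mu|\;\lesssim\; X\;\le\;\frac{1}{\epsilon' A}\sum_P w(P)\,\bfI^*\mu(P)\,\bfI^*\nu(P)\;=\;\frac{1}{\epsilon' A}\int \bfV^\mu\,d\nu,
\]
and your proposed way out is pointed in the wrong direction. The bilinear form $\int \bfV^\mu\,d\nu$ involves the \emph{full} measure $\nu$, and there is no reason this should be $\lesssim|\mu|$: nothing prevents $\nu$ from having huge mass away from $\supp\mu$. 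Extremality of $E$ and the distribution of $\bfV^\nu$ do not help here.

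The missing idea is a \emph{second} localization, on the $P$-side, using the surrogate maximum principle applied to $\mu$ (not to $\nu$). The paper restricts the sum to
\[
\mathcal{R}:=\bigl\{\alpha:\ \theta\epsilon A\,\bfI^*\mu(\alpha)\le \calE_\alpha[\mu]\ \text{and}\ \bfV^\mu(\alpha)\ge c'A\bigr\}.
\]
The first condition is your Chebyshev condition (rearranged); it guarantees, via the output of Lemma~\ref{2main}, that $\sum_{\alpha\in\mathcal{R}}w(\alpha)(\bfI^*\mu(\alpha))^2\gtrsim\calE[\mu]$. The second condition costs almost nothing by Lemma~\ref{partialV}, since $\calE_{c'A}[\mu]\lesssim (c')^\kappa\calE[\mu]$. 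Now replace $\nu$ not by itself but by $\sigma:=\nu\one_F$, where $F$ is the down-set generated by $\mathcal{R}$. On $\mathcal{R}$ one has $\bfI^*\nu=\bfI^*\sigma$, so the chain $A\,\bfI^*\mu\lesssim\bfI^*\sigma$ on $\mathcal{R}$ gives $A^2\calE[\mu]\lesssim\calE[\sigma]$. The crucial gain from the second condition is that $\bfV^\mu\ge c'A$ on $\supp\sigma\subseteq F$, so
\[
\calE[\sigma]\le A|\sigma|\lesssim\int\bfV^\mu\,d\sigma\le\calE[\mu]^{1/2}\calE[\sigma]^{1/2},
\]
whence $\calE[\sigma]\lesssim\calE[\mu]$ and then $A\lesssim 1$. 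In short: you must cut $\nu$ down to the shadow of the set where $\bfV^\mu$ is large, and the surrogate maximum principle re-enters to control $\bfV^\mu$, not $\bfV^\nu$.
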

\begin{proof}
By scaling, we may assume $[w,\nu]_{Box}=1$ without loss of generality.
Let $A := [w,\nu]_{HC}$.
Let $E \subset T^{2}$ be a subset such that $\mu=\nu\one_{E} \neq 0$ and $\mathcal{E}[\mu] = A \abs{\mu}$ (such a subset exists because we assume that $T^{n}$ is finite).
By Lemma~\ref{lem:balance}, there exists a further subset $\tilde{E} \subset T^{2}$ such that $\tilde{\mu} := \mu \one_{\tilde{E}}$ satisfies
\[
\bfV^{\tilde{\mu}} \geq \frac{A}{3} \text{ on } \tilde{E}
\]
and $\tilde\mu \neq 0$.
Thus, replacing $\mu$ by $\tilde{\mu}$, we may assume $\bfV^{\mu} \geq A/3$ on $\supp \mu$.

By Lemma~\ref{2main} applied with $\mu/A$ in place of $\mu$, for sufficiently small $\epsilon,\theta>0$, we have
\begin{equation}
\label{eq:good-energy-dominates}
\int \bfV^{\mu}_{\epsilon A,good} \dif\mu
\geq
2 \theta \calE[\mu].
\end{equation}
We claim that, with these values of $\epsilon$ and $\theta$, we have
\begin{equation}
\label{eq:good-energy-dominates2}
\mathcal{E}[\mu]
\leq \frac{\theta}{1-\theta}
\sum_{\alpha : \theta\epsilon A \bfI^{*}\mu(\alpha) \leq \mathcal{E}_{\alpha}[\mu]} w(\alpha) (\bfI^{*} \mu(\alpha))^{2}.
\end{equation}
Indeed, suppose that $\alpha$ is such that
\[
\theta\epsilon A \bfI^{*}\mu(\alpha)
>
\mathcal{E}_{\alpha}[\mu]
=
\sum_{\omega \leq \alpha} \mu(\omega) \bfV^{\mu}_{\alpha}(\omega),
\quad
\bfV^{\mu}_{\alpha}(\omega) = \sum_{\beta : \omega \leq \beta \leq \alpha} w(\beta) (\bfI^{*}\mu)(\beta),
\]
where the latter definition is from \eqref{eq:V-interval}.
Then we have
\begin{align*}
\sum_{\omega \leq \alpha : \bfV^{\mu}_{\alpha}(\omega) \leq \epsilon A} \mu(\omega)
&=
\bfI^{*}\mu(\alpha) - \sum_{\omega \leq \alpha : \bfV^{\mu}_{\alpha}(\omega) > \epsilon A} \mu(\omega)
\\ &\geq
\bfI^{*}\mu(\alpha) - \frac{1}{\epsilon A} \sum_{\omega \leq \alpha} \bfV^{\mu}_{\alpha}(\omega) \mu(\omega)
\\ &\geq
(1-\theta) \bfI^{*}\mu(\alpha).
\end{align*}
It follows that
\begin{align*}
\sum_{\alpha : \theta\epsilon A \bfI^{*}\mu(\alpha) > \mathcal{E}_{\alpha}[\mu]} w(\alpha) (\bfI^{*} \mu(\alpha))^{2}
&\leq
\sum_{\alpha} w(\alpha) \bfI^{*} \mu(\alpha) \frac{1}{1-\theta} \sum_{\omega \leq \alpha : \bfV^{\mu}_{\alpha}(\omega) \leq \epsilon A} \mu(\omega)
\\ &=
\frac{1}{1-\theta} \sum_{\omega} \mu(\omega) \sum_{\alpha \geq \omega : \bfV^{\mu}_{\alpha}(\omega) \leq \epsilon A} w(\alpha) \bfI^{*} \mu(\alpha)
\\ &=
\frac{1}{1-\theta} \sum_{\omega} \mu(\omega) (\bfV^{\mu}-\bfV^{\mu}_{good,\epsilon A})(\omega)
\\ &\leq
\frac{1-2\theta}{1-\theta} \calE[\mu].
\end{align*}
This implies the claim \eqref{eq:good-energy-dominates2}.

By Lemma~\ref{partialV} again, and since $\bfV^{\mu} \geq A/4$ on $\supp \mu$, we also have
\begin{equation}
\label{eq:small-V-energy}
\mathcal{E}_{c' A}[\mu]
\lesssim
(c'A)^{\kappa} \abs{\mu}^{\kappa} \mathcal{E}[\mu]^{1-\kappa}
\lesssim
(c')^{\kappa} \mathcal{E}[\mu].
\end{equation}
Taking $c'$ sufficiently small and combining \eqref{eq:small-V-energy} with \eqref{eq:good-energy-dominates2}, we obtain
\[
\mathcal{E}[\mu]
\lesssim
\sum_{\alpha \in \mathcal{R}} w(\alpha) (\bfI^{*}\mu(\alpha))^{2},
\quad
\mathcal{R} := \Set{ \alpha\in T^{n} \given \theta\epsilon A \bfI^{*}\mu(\alpha) \leq \mathcal{E}_{\alpha}[\mu], \bfV^{\mu}(\alpha) \geq c' A }.
\]
For each $\alpha \in \mathcal{R}$, we have
\[
\theta\epsilon A \bfI^{*}\mu(\alpha)
\leq
\mathcal{E}_{\alpha}[\mu]
\leq
\mathcal{E}_{\alpha}[\nu]
\leq
[w,\nu]_{Box} \bfI^{*}\nu(\alpha)
=
\bfI^{*}\sigma(\alpha),
\]
where $\sigma := \nu \one_{F}$, $F := \Set{ \beta \in T^{n} \given \exists \alpha\in\mathcal{R}, \alpha\geq\beta }$.
It follows that
\begin{equation}
\label{eq:2}
A^{2} \mathcal{E}[\mu] \lesssim \mathcal{E}[\sigma].
\end{equation}
On the other hand, using the definition of $A$, the fact that $\bfV^{\mu} \gtrsim A$ on $\supp\sigma$, and the Cauchy--Schwarz inequality, we obtain
\begin{equation}
\label{eq:3}
\mathcal{E}[\sigma]
\leq
A \abs{\sigma}
\lesssim
\int \bfV^{\mu} \dif\sigma
\leq
\mathcal{E}[\mu]^{1/2} \mathcal{E}[\sigma]^{1/2}.
\end{equation}
From \eqref{eq:3}, we obtain $\mathcal{E}[\sigma] \lesssim \mathcal{E}[\mu]$, and inserting this into \eqref{eq:2} gives $A \lesssim 1$.
\end{proof}

\section{What we cannot prove}
\label{cannot}

The main problem with pushing the results to $n$-trees, $n\ge 4$, lies with Lemma \ref{SmEMaj2} and Lemma \ref{SmEMaj3}.
Let us start with majorization on a simple dyadic tree.
All trees below are big but finite.
Let $f$, $g$ be two non-negative functions on a simple dyadic tree $T$.
As always $If(v)$ means summing $f(u)$  ``up" from $v$ to root $o$.

Here is the analog of Lemma \ref{SmEMaj2} and Lemma \ref{SmEMaj3}. The big difference of the lemma below is that it involves two functions: $f, g$. This is not the case for Lemma \ref{SmEMaj2} and Lemma \ref{SmEMaj3} that involve one function.

\begin{lemma} 
\label{ET}
Let $\supp f \subset \{Ig \le \delta\}$. Let $g$ be a superadditive function.
There exists $\vf:T\to \bR_+$ such that
\begin{equation}
\label{belowT}
a) \,\,I\vf(\om) \ge If(\om) \quad \forall \om\in \pd T\colon Ig(\om)\in [\la, 2\la]
\end{equation}
\begin{equation}
\label{aboveT}
b)\,\,\int_T\vf^2 \le C\frac{\delta}{\la} \int_T f^2.
\end{equation}
\end{lemma}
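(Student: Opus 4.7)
The approach is to imitate the proofs of Lemma~\ref{SmEMaj2} and Lemma~\ref{SmEMaj3} by testing with
\[
\varphi := \lambda^{-1}\bigl(If\cdot g + f\cdot Ig\bigr)\,\one_{Ig\le 2\lambda}.
\]
Property~\eqref{belowT} should follow directly from Lemma~\ref{lem:split} applied to the pair $f,g$: the pointwise bound $(If)(Ig)\le I(If\cdot g + f\cdot Ig)$, combined with the fact that $\{Ig\le 2\lambda\}$ is an up-set (so that for every $\omega$ with $Ig(\omega)\in[\lambda,2\lambda]$ the indicator $\one_{Ig\le 2\lambda}$ equals $1$ everywhere on the chain above $\omega$), gives
\[
I\varphi(\omega)=\lambda^{-1}I(If\cdot g+f\cdot Ig)(\omega)\ge \lambda^{-1}If(\omega)\,Ig(\omega)\ge If(\omega).
\]

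For property~\eqref{aboveT}, expanding the square yields $\int_T \varphi^2\le 2\lambda^{-2}\bigl[\int (If)^2 g^2\,\one_{Ig\le 2\lambda}+\int f^2(Ig)^2\bigr]$. The second term is immediate: the support condition $\supp f\subset\{Ig\le\delta\}$ makes $f\cdot Ig\le\delta f$ pointwise, so that piece contributes at most $2\delta^2/\lambda^2\int f^2\le \tfrac12(\delta/\lambda)\int f^2$ under $\delta\le\lambda/4$. The real work is then the one-dimensional, two-function analog of Lemma~\ref{lem:energy-bd:3}:
\[
\int_T (If)^2 g^2\,\one_{Ig\le 2\lambda}\le C\delta\lambda\int_T f^2. \tag{$*$}
\]

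My plan for $(*)$ is to apply Lemma~\ref{lem:split} with $g=f$ to bound $(If)^2\le 2I(f\cdot If)$, pass to the adjoint to reduce the left-hand side to $2\int f\cdot If\cdot I^*(g^2\one_{Ig\le 2\lambda})$, and then control the ``potential'' $I^*(g^2\one_{Ig\le 2\lambda})$. Here superadditivity of $g$ combined with the up-set property of $\{Ig\le 2\lambda\}$ makes both $g\one_{Ig\le 2\lambda}$ and $g^2\one_{Ig\le 2\lambda}$ superadditive; applying Corollary~\ref{cor:1} to the factorization $I^*(g\cdot g\one_{Ig\le 2\lambda})$, together with the telescoping identity $I^*(\Delta g)=g$ and the universal bound $I(g\one_{Ig\le 2\lambda})\le 2\lambda$, yields the pointwise estimate $I^*(g^2\one_{Ig\le 2\lambda})(\alpha)\le 2\lambda\,g(\alpha)$. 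Since superadditivity of $g$ forces $g\le g(\text{root})=Ig(\text{root})\le\delta$ on any component of $T$ meeting $\supp f$, this gives the first half-estimate $\int (If)^2 g^2\one_{Ig\le 2\lambda}\lesssim \lambda\delta\int f\cdot If$.

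The step I expect to be the genuine obstacle is closing the loop: bounding $\int_T f\cdot If$ by an absolute constant times $\int_T f^2$. No such bound holds in a general tree—there is no maximum principle available, and $\int f\cdot If$ can be substantially larger than $\int f^2$. To reach $(*)$ one must either keep the sharper identity $(If)^2=2I(f\cdot If)-I(f^2)$ and exploit the negative correction for an absorption argument, or rework the energy estimate so as to make full use of the support condition $\supp f\subset\{Ig\le\delta\}$ in place of the crude pointwise bound $g\le\delta$ used above. This is exactly the one-tree, two-function manifestation of the difficulty that, in the multi-tree setting, blocks Lemmas~\ref{SmEMaj2}--\ref{SmEMaj3} from extending past $n=3$, which is why the present statement appears in the section ``What we cannot prove''.
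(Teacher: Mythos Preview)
Your proof of part~(a) is fine, and your reduction of~(b) to $(*)$ is correct; the gap is exactly where you locate it, but your reading of its significance is wrong. Lemma~\ref{ET} \emph{is} a theorem on a $1$-tree---it sits in the section ``What we cannot prove'' only as the benchmark whose proof is then dissected to show where it breaks on $T^{2}$ (Conjecture~\ref{ETc}). The obstruction you hit, that $\int_T f\cdot If$ is not controlled by $\int_T f^{2}$, is genuine, and the sharper identity $(If)^{2}=2I(f\,If)-I(f^{2})$ does not rescue it: after dualizing, the correction term contributes $-\int f^{2}\,I^{*}(g^{2}\one_{Ig\le 2\lambda})$, which is bounded and of the wrong sign for absorption.

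The paper bypasses $\int f\cdot If$ by a Schur test. Set $K:=I\circ\one_{Ig\le\delta}$, so that $Kf=If$ by the support hypothesis; Lemma~\ref{lem:I2-positive} then gives
\[
\int_T (If)^{2}\,g^{2}\one_{Ig\le 4\lambda}\ \le\ \Bigl(\sup KK^{*}(g^{2}\one_{Ig\le 4\lambda})\Bigr)\int_T f^{2}
\]
directly, with $\int f^{2}$ rather than $\int f\,If$ on the right. The adjoint step $K^{*}(g^{2}\one_{Ig\le 4\lambda})=\one_{Ig\le\delta}\,I^{*}(g^{2}\one_{Ig\le 4\lambda})\le 4\lambda\,g\,\one_{Ig\le\delta}$ is exactly your Corollary~\ref{cor:1} computation (equivalently Lemma~\ref{lem:supadditive-l1linf}), and the loop closes via the one-tree truncation bound $I(g\,\one_{Ig\le\delta})\le\delta$ of~\eqref{Icut}, giving $KK^{*}(\cdots)\le 4\lambda\delta$. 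The missing idea in your argument is therefore not a better energy identity but the Schur test together with~\eqref{Icut}: replacing your pointwise use of $g\le\delta$ by the integrated bound $I(g\,\one_{Ig\le\delta})\le\delta$ is precisely what trades the lossy factor $\int f\,If$ for $\int f^{2}$.
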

\begin{proof}
Put
$$
\vf = \la^{-1}I f\cdot g\cdot {\bf 1}_{I g \le 4\la}\,,
$$
and see \cite{AMPVZ}.
\end{proof}

Now let us see what happens on bi-tree $T^2$.
As before $\bI f(v)$ means summing $f(u)$  ``up"  over all ancestors of $v$ from $v$ to root $o$. Notice that now a vertex may have two parents.

\begin{conjecture} 
\label{ETc}
Let $\supp f \subset \{\bI g \le \delta\}$. Let $g$ be a function  superadditive in its both variables separately .
There exists $\vf:T^2\to \bR_+$ such that
\begin{equation}
\label{belowTc}
a) \,\,\bI\vf(\om) \ge \bI f(\om) \quad \forall \om\in \pd T^2\colon \bI g(\om)\in [\la, 2\la]
\end{equation}
\begin{equation}
\label{aboveTc}
b)\,\,\int_{T^2}\vf^2 \le C\Big(\frac{\delta}{\la} \Big)^\tau\int_{T^2} f^2
\end{equation}
with some positive $\tau$.
\end{conjecture}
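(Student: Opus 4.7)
Imitating the construction of Lemma \ref{ET} (one tree, two functions) and of Lemma \ref{SmEMaj2} (bi-tree, one function), I would set
\[
\varphi := \frac{2}{\lambda}\Bigl(\bI f\cdot g + I_{1}f\cdot I_{2}g + I_{2}f\cdot I_{1}g\Bigr)\cdot\one_{\bI g\le 2\lambda},
\]
the sum of the three ``non-diagonal'' terms produced by Lemma \ref{lem:split2} applied to the pair $(f,g)$. Part a) then follows immediately from Lemma \ref{lem:split2}: it gives
\[
(\bI f)(\bI g)\le \bI\bigl(\bI f\cdot g + I_{1}f\cdot I_{2}g + I_{2}f\cdot I_{1}g\bigr) + \bI\bigl(f\cdot\bI g\bigr),
\]
whose last summand is $\le\delta\,\bI f$ since $\bI g\le\delta$ on $\supp f$. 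Because $\lambda\ge 4\delta$, the pointwise bound $\delta\,\bI f\le(\bI f)(\bI g)/4$ holds on $\{\bI g\ge\lambda\}$, and after absorption one obtains $\bI f\le(2/\lambda)\bI(\text{good terms})$ there. The indicator $\one_{\bI g\le 2\lambda}$ may then be inserted inside $\bI$ without loss, since $\bI g$ is non-increasing along ancestors: for $\omega$ with $\bI g(\omega)\le 2\lambda$ and any $\alpha\ge\omega$ one also has $\bI g(\alpha)\le 2\lambda$.

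Part b) is the substantive content. Expanding the square and using $(X+Y+Z)^{2}\le 3(X^{2}+Y^{2}+Z^{2})$ it suffices to show that each of
\[
\int(\bI f\cdot g)^{2}\one_{\bI g\le 2\lambda},\qquad
\int(I_{1}f\cdot I_{2}g)^{2}\one_{\bI g\le 2\lambda},\qquad
\int(I_{2}f\cdot I_{1}g)^{2}\one_{\bI g\le 2\lambda}
\]
is bounded by $C\,\delta^{\tau}\lambda^{2-\tau}\int f^{2}$ for some $\tau>0$. For the symmetric mixed integral I would mimic Lemma \ref{lem:en2:2}: apply Lemma \ref{lem:split} in each coordinate to get $(I_{1}f)^{2}\le 2I_{1}(f\cdot I_{1}f)$, use the pointwise estimate $(I_{2}g)^{2}\one_{\bI g\le 2\lambda}\le 2\lambda\cdot I_{2}g\cdot\one_{\bI g\le 2\lambda}$ to extract one factor of $\lambda$, integrate by parts in the first coordinate, and then invoke Corollary \ref{cor:1} with the \emph{superadditive} function $g$ to transform $I_{1}^{*}(g\cdot I_{2}g)\le I_{1}^{*}(\Delta_{1}g\cdot\bI g)$. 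A second integration by parts should then allow one to replace $\bI g$ by its pointwise bound $\delta$ on $\supp f$, producing the desired $\delta^{\tau}$ factor.

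The main obstacle, and the reason the statement is recorded as a conjecture rather than a theorem, is that Corollary \ref{cor:1} is tailored to a \emph{single} superadditive function which also carries the support condition, as in Lemma \ref{lem:en2:2} through the pointwise identity $I_{1}^{*}(f\cdot I_{2}(wf))\le\delta f$. In the two-function setting these roles are split between $f$ (support condition) and $g$ (superadditivity), and a direct analogue fails for a geometric reason: the quantity $I_{1}^{*}(I_{2}g)(\omega)$ sums $g$ over the ``bottom-left in variable $1$ $\times$ top-right in variable $2$'' quadrant, which is disjoint from the upper-right quadrant controlled by the hypothesis $\bI g\le\delta$ on $\supp f$. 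The first of the three integrals above is worse still, because it features $\bI f$, which is a priori uncontrolled. Any successful argument will therefore need a genuinely new ingredient---for instance, a stratification of $\supp f$ along the level sets $\{\bI g\in[2^{k}\delta,2^{k+1}\delta]\}$ with geometric summation, or a replacement of $f$ by a superadditive majorant produced by a balancing step in the spirit of Lemma \ref{lem:balance}, followed by careful iteration of Lemma \ref{lem:split2}. We expect any such argument to produce only a weakened exponent $\tau<2$, in contrast to the $\tau=2$ available in Lemma \ref{SmEMaj2} in the single-function case.
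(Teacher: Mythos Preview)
The statement you are ``proving'' is recorded in the paper as a \emph{conjecture}; the paper contains no proof, and the surrounding discussion is precisely an explanation of why the authors cannot prove it. Your construction of $\varphi$ coincides verbatim with the paper's formula \eqref{vf2}, and your verification of part a) via Lemma~\ref{lem:split2} plus absorption of the $f\cdot\bI g$ term is exactly the argument behind \eqref{maj2}. So on part a) you and the paper agree completely, and there is nothing to add.

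On part b) your diagnosis is broadly right---the decisive obstruction is the term $\int_{T^2}(\bI f)^2 g^2$, which is exactly what the paper isolates---but your sketch for the mixed terms $\int (I_1 f\cdot I_2 g)^2\one_{\bI g\le 2\lambda}$ does not actually close either. After the first move $(I_1 f)^2\le 2I_1(f\cdot I_1 f)$ and the pointwise bound on $(I_2 g)^2$, duality produces $I_1^*(I_2 g\cdot\one_{\bI g\le 2\lambda})$ acting on $f\cdot I_1 f$, not the expression $I_1^*(g\cdot I_2 g)$ to which you apply Corollary~\ref{cor:1}; and even had it done so, the step ``replace $\bI g$ by $\delta$ on $\supp f$'' is unavailable because $\Delta_1 g$ is supported where $g$ lives, not where $f$ lives. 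The paper's own account of the failure is sharper than yours: it pinpoints two concrete breakdowns in the one-tree proof of Lemma~\ref{ET}. First (Remark~\ref{FirstDifficulty}), the inductive proof of Lemma~\ref{lem:supadditive-l1linf} relies on $\sum_{\beta'\in\ch(\beta)}g(\beta')\le g(\beta)$, which on $T^2$ becomes $\le 2g(\beta)$ and leads to exponential accumulation of constants. Second (Remark~\ref{SecondDifficulty}), the cut-off inequality $I(\one_{Ig\le\delta}\,g)\le\delta$ is simply false on $T^2$ (this is \eqref{bIcut}). Your ``roles are split between $f$ and $g$'' observation is a good heuristic, but these two lemmas are where the argument actually dies, and any genuine proof of Conjecture~\ref{ETc} would have to circumvent both.
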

By analogy with the previous section one may think that given $f, g$ on $T^2$, such that
\begin{equation}
\label{supportT2}
\supp f \subset \{ \bI g\le \delta\}
\end{equation}
and having $g$ (super)additive on $T^2$, one constructs $\vf$ as in Lemma  \ref{ET} by formula
$$
\vf =\la^{-1} \bI f \cdot g\cdot {\bf 1}_{\bI g \le 4\la}\,.
$$

However this is false.  

What is true is the following: let   $\supp f\subset \{ \bI g\le \delta\}$ and  let $\la \ge 10\delta$, $\vf:= \la^{-1}(I_1 f \cdot I_2g +  I_1 g \cdot I_2 f + g\cdot \bI f )$. Then
\begin{equation}
\label{maj2}
\bI (\one_{\bI g \le 2\la} \cdot \vf) \ge  \bI f, \quad \text{where} \,\, \bI g \in [\la, 2\la]\,.
\end{equation}

So a) from the previous lemma can be generalized to bi-tree with the following formula for $\vf$:  
\begin{equation}
\label{vf2}
\vf= \la^{-1}(I_1 f \cdot I_2g +  I_1 g \cdot I_2 f + g\cdot \bI f )\cdot \one_{\bI g \le 2\la}.
\end{equation}

\bigskip

The main difficulty in generalizing Lemma \ref{ET} to bi-trees is that we cannot prove b) of this lemma on bi-tree. This is because we have no good estimate of $\int_{T^2} (\bI f)^2 g^2$ via $\int_{T^2} f^2$ for $g$ that are superadditive in both variables.

Notice that this hurdle is removed if $f=g$ because then 
$$
\bI (\la^{-1} g \bI f)= \bI (\la^{-1} f \bI f)\le \frac{\delta}{\la} \bI f \le \frac1{10} \bI f,
$$
and we have another $\vf$ for majorization: $\tilde \vf :=  c\la^{-1}(2I_1 f \cdot I_2f  )$, where $c=\frac{10}{9}$. In fact from \eqref{maj2} it now follows that
\begin{equation}
\label{maj2f}
\bI (\one_{\bI f \le 2\la} \cdot \tilde\vf) \ge  \bI f, \quad \text{where} \,\, \bI f \in [\la, 2\la]\,.
\end{equation}
The analog of inequality b) of Lemma \ref{ET} $\equiv$ \eqref{aboveT} on bi-tree
now follows from Lemma \ref{lem:en2:2}.

\bigskip

For tri-tree we do not have the analog of Lemma \ref{ET}  with two functions $f, g$, as we do not have it even on bi-tree.

But similarly with \eqref{vf2} we can put
\begin{equation}
\label{vf3}
\begin{split}
&\vf= \la^{-1}(I_1 f \cdot \bI_{23} g + I_2 f \cdot \bI_{13} g  +I_3 f \cdot \bI_{12} g  + 
\\
&I_1 g f \cdot \bI_{23} f + I_2 g \cdot \bI_{13} f  +I_3 g \cdot \bI_{12} f + g \bfI f) .
\end{split}
\end{equation}
Again this function $\vf$ will satisfy
\begin{equation}
\label{maj3}
\bfI (\cdot \one_{\bfI \le 2\la}\cdot \vf) \ge \bfI f, \quad\text{where} \,\, \bfI g \in [\la, 2\la],
\end{equation}
which the analog of a) of Lemma \ref{ET} (and the analog of \eqref{maj2}).
However, we cannot prove the analog  of  b) of Lemma \ref{ET}  for this function.

The main difficulty in generalizing Lemma \ref{ET} to tri-trees is that we cannot prove b) of this lemma on tri-tree. This is because we have no good estimate of $\int_{T^3} (\bfI f)^2 g^2$ via $\int_{T^3} f^2$ for $g$ that are superadditive in both variables.

Notice that this hurdle is removed if $f=g$ because then 
$$
\bfI (\la^{-1} g \bfI f)= \bfI (\la^{-1} f \bfI f)\le \frac{\delta}{\la} \bfI f \le \frac1{10} \bfI f,
$$
and in place of $\vf$ from \eqref{vf3}, we have another $\vf$ for majorization: 
$$
\tilde \vf :=  c\la^{-1}(2I_1 f \cdot \bI_{23} f + 2I_2 f \cdot \bI_{13} f  +2I_3 f \cdot \bI_{12} f ),
$$
 where $c=\frac{10}{9}$. In fact from \eqref{maj3} it now follows that
\begin{equation}
\label{maj3f}
\bI (\one_{\bfI f \le 2\la} \cdot \tilde\vf) \ge  \bfI f, \quad \text{where} \,\, \bfI f \in [\la, 2\la]\,.
\end{equation}
The analog of inequality b)  of Lemma \ref{ET} $\equiv$ \eqref{aboveT} on tri-tree
now follows from Lemma \ref{lem:energy-bd:3}.

\bigskip

\subsection{What goes wrong on $4$-tree}
The reader has the right to ask: you do not know how to estimate $\int_{T^2} (\bI f)^2 g^2$ via $\int_{T^2} f^2$ and you do not know  how to estimate $\int_{T^3} (\bfI f)^2 g^2$ via $\int_{T^2} f^2$, but you know how to remove this hurdle in the case $f=g$. May be one can also remove this hurdle for $f=g$ on $d$-tree, $d\ge 4$?

Unfortunately, we can see  now that the trick does not work for $d\ge 4$. Let us notice that by the analogy with \eqref{vf2}, \eqref{vf3} we can construct $\vf$ for $4$-tree:
\begin{equation}
\label{vf4}
\begin{split}
&\vf= \la^{-1}(I_1 f \cdot \bI_{234} g + I_2 f \cdot \bI_{134} g  +I_3 f \cdot \bI_{124} g  + I_4 f \cdot \bI_{123} g +
\\
&I_1 g  \cdot \bI_{234} f + I_2 g \cdot \bI_{134} f  +I_3 g \cdot \bI_{124} f +I_4 g \cdot \bI_{123} f  +
\\
& \bI_{12} g \cdot \bI_{34} f +   \bI_{23} g  \cdot \bI_{14} f +   \bI_{34} g  \cdot \bI_{12} f +   \bI_{12}  f \cdot \bI_{34} g+   \bI_{23}  f \cdot \bI_{14} g +   \bI_{34}  f \cdot \bI_{12} g+
\\
& g \bfI f) .
\end{split}
\end{equation}
Here $\bfI$ means summation in all $4$ variables, the Hardy operator on $T^4$.
Let us consider what happens for the case $g=f$. We again can absorb the last term $g \bfI f= f \bfI f \le \delta f$ into the left hand side because $\supp f \subset \{\bfI f \le \delta\}$.

But to prove the analog of b) of Lemma \ref{ET} we would need to know how to estimate e. g.
$$
\int_{T^4} (\bI_{12} f \cdot \bI_{34} f)^2 \le C\int_{T^4} f^2\,.
$$
We do not know  how to achieve such an estimate.

To feel this difficulty better, let us prove Lemma \ref{ET}, where the main point is the following ``weighted'' estimate of 
\begin{equation}
\label{weightedT}
\supp f \subset \{ I g\le \delta\}\Rightarrow\int_T (I f)^2 g^2 \le C\delta\|Ig\|_\infty \int_T f^2\quad\text{for superadditive}\,\, g\,.
\end{equation}

\subsubsection{The proof of Lemma \ref{ET} and the explanation where the proof breaks down on bi-tree}
\label{prET}

We just repeat  the proof from \cite{AMPVZ}, but we emphasize why the proof does not work for very similar estimate of $\int_{T^2 }(\bI f)^2 g^2$.
We are in the assumptions of Lemma \ref{ET}. That is, we are given two functions $f, g$ on tree $T$, and 
$$
1)\,\, \supp f \subset \{Ig \le \delta\},
$$
$$
2)\,\,g  \,\, \text{is a superadditive function.}
$$
We need to see why the key estimate \eqref{weightedT} works on $T$ and will not work on $T^2$ if one replaces $I$ by $\bI$ and $T$ by $T^2$ everywhere.

We start with lemma that holds regardless of operator and medium.

\begin{lemma}
\label{lem:I2-positive}
Let $K$ be an integral operator with a positive kernel and $f,g$ positive functions.
Then
\[
\int (Kf)^{2} g \leq \Bigl( \sup_{\supp g} KK^{*}g \Bigr) \int f^{2}.
\]
\end{lemma}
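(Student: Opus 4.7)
The plan is to rewrite $\int (Kf)^{2} g$ as a positive symmetric quadratic form in $f$ and bound its $L^{2}\to L^{2}$ operator norm by Schur's test, choosing $K^{*}g$ as the Schur weight. Concretely, by positivity of the kernel and Fubini,
\[
\int (Kf)^{2} g = \iint f(y_{1}) f(y_{2}) H(y_{1}, y_{2}) \, dy_{1}\, dy_{2}, \quad H(y_{1}, y_{2}) := \int K(x, y_{1}) K(x, y_{2}) g(x) \, dx,
\]
so $H$ is a positive, symmetric kernel on the $y$-space.

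Setting $\phi := K^{*}g$ and applying Fubini once more,
\[
(H\phi)(y_{1}) = \int K(x, y_{1}) g(x) (K K^{*} g)(x) \, dx \leq M \int K(x, y_{1}) g(x) \, dx = M\, \phi(y_{1}),
\]
where $M := \sup_{\supp g} K K^{*} g$; the inequality uses that the factor $g(x)$ restricts the integrand to $\supp g$, on which $K K^{*} g \leq M$ by definition. Schur's test for symmetric positive kernels then yields $\|H\|_{L^{2} \to L^{2}} \leq M$, and Cauchy--Schwarz gives
\[
\int (Kf)^{2} g = \langle H f, f \rangle \leq \|H\|_{L^{2} \to L^{2}} \|f\|_{2}^{2} \leq M \int f^{2},
\]
which is the claim.

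I do not foresee a genuine difficulty here: the essential (and only) insight is the choice of Schur weight $\phi = K^{*} g$, selected so that $K\phi = K K^{*} g$ and the sup $\sup_{\supp g} K K^{*} g$ appears naturally once the $g(x)$ factor localizes the integration. An equivalent route avoiding naming the Schur test is to start from the identity $\int (Kf)^{2} g = \int f \cdot K^{*}(g \cdot K f)$ and apply Cauchy--Schwarz twice: once inside, splitting the weight $K(x, y) g(x)$ into square-root factors to separate $Kf$ from $1$, and once outside against $f$; Fubini then produces the factor $K K^{*} g$ restricted to $\supp g$, and dividing through by $\bigl(\int (Kf)^{2} g\bigr)^{1/2}$ closes the estimate.
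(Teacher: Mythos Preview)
Your proof is correct and is essentially a repackaging of the paper's argument: both hinge on the single observation that the factor $g(x)$ localizes to $\supp g$, where $KK^{*}g\le M$. The paper writes $\int (Kf)^{2}g=\int f\,K^{*}(g\cdot Kf)$, applies Cauchy--Schwarz, and then bounds $\lVert K^{*}(g\cdot Kf)\rVert_{2}^{2}$ by the AM--GM step $Kf(x)Kf(x')\le\tfrac12((Kf(x))^{2}+(Kf(x'))^{2})$, arriving at the self-improving inequality $\int (Kf)^{2}g\le \lVert f\rVert_{2}\,M^{1/2}\bigl(\int (Kf)^{2}g\bigr)^{1/2}$; this is exactly the ``equivalent route'' you sketch in your last paragraph, and unpacking Schur's test with weight $\phi=K^{*}g$ reproduces the same computation.

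One minor point worth making explicit: Schur's test requires a positive weight, and $\phi=K^{*}g$ may vanish. But wherever $\phi(y)=0$ one has $K(\cdot,y)g=0$ a.e., hence the entire $y$-row and $y$-column of $H$ vanish, so $H$ lives on $\{\phi>0\}$ and Schur applies there; this is harmless but should be said.
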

\begin{proof}
Without loss of generality $f$ is positive.
By duality we have
\[
\int (Kf)^{2} g
=
\int f K^{*}(Kf \cdot g)
\leq
\norm{f}_{2} \norm{K^{*}(Kf \cdot g)}_{2}.
\]
We call the operator and its kernel by the same letter $K$. By the hypothesis $Kh(x) = \int K(x,y) h(y)$ with a positive kernel $K$.
Hence
\begin{align*}
\norm{K^{*}(Kf \cdot g)}_{2}^{2}
&=
\int K^{*}(Kf \cdot g) K^{*}(Kf \cdot g)
\\ &=
\int K(x,y) ((If)(x) g(x)) K(x',y) ((Kf)(x') g(x')) \dif(x,x',y)
\\ &\leq
\int \frac12 (Kf(x)^{2}+Kf(x')^{2}) K(x,y) (g(x)) K(x',y) (g(x')) \dif(x,x',y)
\\ &=
\frac12 \int K^{*}((Kf)^{2} \cdot g) K^{*}(g) + \int K^{*}(g) K^{*}((Kf)^{2} \cdot g)
\\ &=
\int (KK^{*}g) \cdot (Kf)^{2} \cdot g
\\ &\leq
\Bigl( \sup_{\supp g} KK^{*}g \Bigr) \int (Kf)^{2} \cdot g.
\end{align*}
Substituting the second displayed estimate into the first we obtain
\[
\int (Kf)^{2} g
\leq
\norm{f}_{2} \Bigl( \sup_{\supp g} KK^{*}g \Bigr) \Bigl( \int (Kf)^{2} \cdot g \Bigr)^{1/2}.
\]
The conclusion follows.
\end{proof}

In the preceding lemma operator $K$ could have been either $I$ on $T$ or $\bI$ on $T^2$, this did not matter. But in the next lemma, it matters whether we are on $T$ or $T^2$.

\begin{lemma}
\label{lem:supadditive-l1linf}
Let $T$ be a finite tree and $g,h : T \to [0,\infty)$.
Assume that $g$ is superadditive and $\la=\|Ih\|_{L^\infty(\supp g)}$.
Then for every $\beta \in T$ we have
\[
I(gh)(\beta)=\sum_{\alpha \leq \beta} g(\alpha) h(\alpha)
\leq
\lambda g(\beta).
\]
\end{lemma}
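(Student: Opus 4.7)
The plan is to rewrite the sum $\sum_{\alpha \le \beta} g(\alpha)h(\alpha)$ by partial summation on the sub-tree $\downset\beta$, exploiting the pointwise positivity of $\Delta g$ that follows from superadditivity of $g$, and then invoking the $L^\infty(\supp g)$ bound on $Ih$ in a single decisive step.

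First, I will use the identity $g(\alpha) = \sum_{\alpha' \le \alpha}\Delta g(\alpha')$ (established in the proof of Lemma~\ref{lem:partial-summation}), substitute, and interchange the order of summation, obtaining
\[
\sum_{\alpha \le \beta} g(\alpha)h(\alpha) = \sum_{\alpha' \le \beta}\Delta g(\alpha') \sum_{\alpha:\, \alpha' \le \alpha \le \beta} h(\alpha) \le \sum_{\alpha' \le \beta}\Delta g(\alpha') \cdot Ih(\alpha'),
\]
where the inequality uses only $h \ge 0$.

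Next, I will observe that whenever $\Delta g(\alpha') > 0$ the point $\alpha'$ must lie in $\supp g$: indeed, superadditivity gives $g(\alpha') = \Delta g(\alpha') + \sum_{\beta'\in\ch(\alpha')}g(\beta') \ge \Delta g(\alpha') > 0$. Hence the hypothesis $\|Ih\|_{L^\infty(\supp g)} = \lambda$ yields $Ih(\alpha') \le \lambda$ at every $\alpha'$ that contributes, and applying $\sum_{\alpha' \le \beta}\Delta g(\alpha') = g(\beta)$ one last time gives
\[
\sum_{\alpha \le \beta} g(\alpha)h(\alpha) \le \lambda\sum_{\alpha' \le \beta}\Delta g(\alpha') = \lambda g(\beta).
\]

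There is no genuine obstacle on a $1$-tree; the whole argument hinges on the reconstruction $g = \sum \Delta g$ with $\Delta g \ge 0$, which lets one collapse the $h$-sum along a chain to the single value $Ih(\alpha')$ to which the $\supp g$ bound then applies. As the commentary in Section~\ref{cannot} emphasizes, it is precisely this reconstruction that breaks on bi- and tri-trees, since a function superadditive in each variable separately need not have non-negative iterated difference $\Delta_1\Delta_2 g$; that is the underlying reason the weighted two-function estimate $\int_{T^2}(\bbI f)^2 g^2 \lesssim \delta \|\bbI g\|_\infty \int_{T^2} f^2$ lies beyond the paper's methods and Lemma~\ref{ET} does not admit a two-function multi-tree analog.
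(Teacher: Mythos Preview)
Your proof is correct, and it is genuinely different from the paper's. The paper argues by induction on the depth of the tree: one splits $\sum_{\alpha\le\beta}g(\alpha)h(\alpha)$ as the root term $g(\beta)h(\beta)$ plus the contributions of the branches rooted at the children $\beta'\in\ch(\beta)$, applies the inductive hypothesis on each branch, and then uses superadditivity in the pointwise form $\sum_{\beta'\in\ch(\beta)}g(\beta')\le g(\beta)$ to recombine. Your route instead invokes the partial summation identity $g=\sum\Delta g$ (equivalently Corollary~\ref{cor:1} applied with $f\to g$, $g\to h$) to reach $\sum_{\alpha\le\beta}g(\alpha)h(\alpha)\le\sum_{\alpha'\le\beta}\Delta g(\alpha')\,Ih(\alpha')$ in one stroke, and then finishes with the neat observation that $\Delta g(\alpha')>0$ forces $\alpha'\in\supp g$, so the $L^\infty(\supp g)$ bound applies termwise. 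Your argument is shorter and avoids induction; the paper's argument, on the other hand, makes the bi-tree obstruction immediately visible as the doubling $\sum_{\beta'\in\ch(\beta)}g(\beta')\le 2g(\beta)$ that spoils the recursion (Remark~\ref{FirstDifficulty}), whereas in your framework the same obstruction appears as the failure of $\Delta_1\Delta_2 g\ge 0$ for functions superadditive only in each variable separately. Both diagnoses point to the same phenomenon.
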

\begin{proof}
Without loss of generality we may consider the case when $\beta$ is the unique maximal element of $T$ and $T= \supp g$.
We induct on the depth of the tree.
Let $T$ be given and suppose that the claim is known for all its branches.
Then by the inductive hypothesis and superadditivity of $g$ we have
\begin{align*}
\sum_{\alpha \leq \beta} g(\alpha) h(\alpha)
&=
g(\beta) h(\beta) + \sum_{\beta' \in \ch(\beta)} \sum_{\alpha \leq \beta'} g(\alpha) h(\alpha)
\\ &\leq
g(\beta) h(\beta) + \sum_{\beta' \in \ch(\beta)} g(\beta') \sup_{\alpha \leq \beta'} \sum_{\alpha \leq \alpha' \leq \beta'} h(\alpha')
\\ &\leq
g(\beta) h(\beta) + \sum_{\beta' \in \ch(\beta)} g(\beta') \sup_{\alpha < \beta} \sum_{\alpha \leq \alpha' < \beta} h(\alpha')
\\ &\leq^{key}
g(\beta) h(\beta) + g(\beta) \sup_{\alpha < \beta} \sum_{\alpha \leq \alpha' < \beta} h(\alpha')
\\ &=
g(\beta) \sup_{\alpha \leq \beta} \sum_{\alpha \leq \alpha' \leq \beta} h(\alpha').
\qedhere
\end{align*}
\end{proof}
\begin{remark}
\label{FirstDifficulty}
It seems like this claim fails to be true on $T^2$. At least the reasoning fails. In Conjecture \ref{ETc} we had to assume that $g$ is superadditive in its both variables. This assumption is indispensable for us, because in our applications  of such a lemma on $T^2$ function $g$ on $T^2$ always comes from  some  function (measure)  $f$  additive on $T^3$ in each of its three variables. Function $g$ is always defined by a simple rule 
$g= I_i f \cdot \one_{\bfI f\le t}$, $i=1$ or $2$ or $3$. But such function $g$ is automatically superadditive in each of its two variables.

But if $g$ is superadditive in its both variables then the key estimate in the above lemma does not work. In fact, instead of having $\sum_{\beta' \in \ch(\beta)} g(\beta') \le g(\beta)$ we will have to write
$$
\sum_{\beta' \in \ch(\beta)} g(\beta') \le 2 g(\beta)\,.
$$
This seemingly innocuous change leads to accumulation of constant in the above proof, the proof breaks down if it cannot keep constant $1$ at every stage of induction.
\end{remark}
Now we present the proof of Lemma \ref{ET} by means of Lemma \ref{lem:I2-positive} and Lemma \ref{lem:supadditive-l1linf}.
Let $\vf = 2\la^{-1} If \cdot g \cdot \one_{Ig \le 4\la}$. Let $\om $ be such that $Ig (\om) \ge \la$. Then $f(\om)=0$ and $f(\gamma)=0$ for all ancestors of $\om$ up to the first $\gamma'$ such that $Ig (\gamma') \le \delta$. Hence,  on such $\om$
\[
\sum_{\gamma\ge \om}If \cdot g \cdot \one_{Ig \le 4\la} = \sum_{\gamma\ge \om}If \cdot g =If (\om) (\la -\delta) \ge \frac{\la}{2} If (\om).
\]
We checked \eqref{belowT} of Lemma \ref{ET}.

To check \ref{aboveT} we first apply Lemma \ref{lem:I2-positive} with
$$
K:= I\circ \one_{I g\le \delta},
$$
which a composition of multiplication operator and $I$.
Then
$$
\int_T \vf^2 = \frac{4}{\la^2}\int_T (If)^2 (g\one_{I g\le 4\la})^2 \le \frac{4}{\la^2}\sup_{\supp g} KK^* (g^2\one_{I g\le 4\la}) \int_T f^2\,.
$$
To understand  $\sup_{\supp g} KK^* (g^2\one_{I g\le 4\la}) $ we use Lemma \ref{lem:supadditive-l1linf}. By this lemma for any node $\al$
$$
K^* (g^2\one_{I g\le 4\la})(\al) \le I^* (g^2\one_{I g\le 4\la})(\al) \le 4\la g(\al)\,.
$$
Now we are left to estimate $Kg= I (\one_{I g \le \delta} g)$. But just by definition of $I$ we have
\begin{equation}
\label{Icut}
 I (\one_{I g \le \delta} g) \le  \delta.
 \end{equation}
 So $\sup_{\supp g} KK^* (g^2\one_{I g\le 4\la}) \le 4\delta\la$ and we get 
 $$
 \int_T \vf^2 \le \frac{16 \delta}{\la} \int f^2\,.
 $$
 \begin{remark}
 \label{SecondDifficulty}
 We already observed one obstacle to prove Conjecture \ref{ETc}. We did this in Remark \ref{FirstDifficulty}. Now let us observe, that even if we would manage to overcome this first difficulty mention in that remark, we still have another very serious one: the analog of inequality \eqref{Icut} is blatantly false on $T^2$. The fallowing inequality is generically false:
 \begin{equation}
\label{bIcut}
 \bI (\one_{\bI g \le \delta} g) \le  \delta.
 \end{equation}
 \end{remark}

\begin{remark}
We feel that if we would know how to prove Conjecture \ref{ETc} on $T^2$ we would be able to prove it on any $T^d$. This would prove our surrogate maximal principle in any dimension. This, in its turn, would characterize embedding measures on graphs $T^d$ not only for $d=1, 2, 3$, but for arbitrary $d$.
\end{remark}
%%%%%%%%%%%%%%%%%%%%%%%%%%%%%%%%%%%%%%%%

\end{document}